\documentclass[11pt,dvipsnames]{amsart}

\usepackage{xcolor}
\usepackage{multirow}
\definecolor{myblue}{HTML}{1E3A5F}
\usepackage{amsmath,amssymb,amsfonts}
\usepackage[colorlinks,linkcolor=myblue,citecolor=myblue,urlcolor=myblue]{hyperref}
\usepackage{amsthm}
\usepackage[foot]{amsaddr}
\usepackage[nameinlink]{cleveref}
\Crefname{equation}{}{}
\crefname{lemma}{Lemma}{Lemmas}
\Crefname{lemma}{Lemma}{Lemmas}
\usepackage[utf8]{inputenc}
\usepackage{graphicx}
\usepackage{color}
\usepackage{float}
\usepackage{soul}
\usepackage{lipsum}
\usepackage{comment}
\usepackage{bm}
\usepackage{mathtools}
\usepackage{etoolbox}
\usepackage{algorithm}
\usepackage{algpseudocode}
\usepackage{fancybox}
\usepackage{enumitem}
\usepackage{booktabs}
\usepackage{stmaryrd}
\usepackage{a4wide}

\graphicspath{{../}}

\newtheorem{theorem}{Theorem}[section]
\newtheorem{lemma}[theorem]{Lemma}
\newtheorem{corollary}[theorem]{Corollary}

\newtheorem{remark}[theorem]{Remark}
\newtheorem{assumption}[theorem]{Assumption}
\newtheorem{example}[theorem]{Example}

\Crefname{assumption}{Assumption}{Assumptions}

\DeclareMathOperator*{\argmin}{arg\,min}
\DeclareMathOperator*{\argmax}{arg\,max}

\DeclareFontFamily{U}{matha}{\hyphenchar\font45}
\DeclareFontShape{U}{matha}{m}{n}{
<-6> matha5 <6-7> matha6 <7-8> matha7
<8-9> matha8 <9-10> matha9
<10-12> matha10 <12-> matha12
}{}
\DeclareSymbolFont{matha}{U}{matha}{m}{n}
\DeclareFontFamily{U}{mathx}{\hyphenchar\font45}
\DeclareFontShape{U}{mathx}{m}{n}{
<-6> mathx5 <6-7> mathx6 <7-8> mathx7
<8-9> mathx8 <9-10> mathx9
<10-12> mathx10 <12-> mathx12
}{}
\DeclareSymbolFont{mathx}{U}{mathx}{m}{n}
\DeclareMathDelimiter{\vvvert}{0}{matha}{"7E}{mathx}{"17}

\DeclarePairedDelimiterX{\normiii}[1]{\vvvert}{\vvvert}{\ifblank{#1}{\:\cdot\:}{#1}}

\makeatletter
\newcommand{\tnorm}{\@ifstar\@tnorms\@tnorm}
\newcommand{\@tnorm}[2][]{%
	\mathopen{#1|\mkern-1.5mu#1|\mkern-1.5mu#1|}
	#2
	\mathclose{#1|\mkern-1.5mu#1|\mkern-1.5mu#1|}
}
\makeatother

\definecolor{LightRed}{rgb}{0.9,0.55,0.5}

\newcommand{\sfp}{\mathsf{p}}
\newcommand{\R}{\mathbb{R}}
\newcommand{\fa}{\text{ for all }}
\newcommand{\dd}{\operatorname{d}\!}
\newcommand{\DG}{\mathrm{DG}}

\newcommand{\bfn}{\boldsymbol{n}}

\title[Proximal Discontinuous Galerkin Methods]{Proximal Discontinuous Galerkin Methods for Variational Inequalities}

\author{Alexandre Ern$^1$$^2$, Brendan Keith$^3$, Dohyun Kim$^3$, Rami Masri$^3$, Beatrice Riviere$^4$}
\address{$^1$ CERMICS, CNRS, ENPC, Institut Polytechnique de Paris, Marne-la-Vall\'ee cedex~2, F-77455, France}
\address{$^2$ Centre Inria de Paris, 48 rue Barrault, F-75647, Paris, France}
\email{alexandre.ern@enpc.fr}
\address{$^3$ Division of Applied Mathematics, Brown University, Providence, RI~02912}
\email{brendan\_keith@brown.edu, dohyun\_kim@brown.edu}
 \email{rami\_masri@brown.edu}
\address{$^4$ Department of Computational Applied Mathematics and Operations Research,
Rice University, Houston, TX~77005, USA}
\email{riviere@rice.edu}
\thanks{BK, DK, and RM were supported in part by the U.S.\ Department of Energy,
Office of Science Early Career Research Program under Award Number DE-SC0024335
and by the Center for Information Geometric Mechanics and Optimization (CIGMO),
a PSAAP-IV Focused Investigatory Center funded by the U.S.\ Department of Energy,
National Nuclear Security Administration under Award Number DE-NA0004261.
BK was also supported in part by the Alfred P.\ Sloan Foundation via a Sloan Research
Fellowship in Mathematics.
BR is partially supported by NSF-DMS~2513092.
The authors are thankful to Z.\ Dong (INRIA Paris) for his assistance in the
implementation of the HHO method.}

\begin{document}

\begin{abstract}
We introduce a family of proximal discontinuous Galerkin methods for variational inequalities, focusing on the obstacle problem as a didactic example. Each member of this family is born from applying a different well-known nonconforming finite element discretization to the Bregman proximal point method. We explicitly treat four examples: the symmetric interior penalty discontinuous Galerkin, the enriched Galerkin, the hybridizable interior penalty and the hybrid high-order methods. We formulate a unified analysis framework for this family of methods and prove the existence and uniqueness of solutions, energy dissipation, and error estimates for both the primal and dual variables. Remarkably, the proximal hybrid high-order method with piecewise constant cell unknowns and piecewise affine facet unknowns leads to the first higher-order convergence result for any proximal Galerkin method.

\vspace{1em}
\smallskip
\noindent \textit{Key words}.
Obstacle problem, Bregman proximal point, discontinuous Galerkin, hybridization, enriched Galerkin, hybrid high-order methods, a priori error analysis
\smallskip

\noindent \textit{MSC codes.} 35J86, 49J40, 65N30.
\end{abstract}
\maketitle

\section{Introduction}

Variational problems with pointwise inequality constraints arise throughout applied mathematics and engineering, encompassing contact mechanics, elastoplasticity, and phase-field models~\cite{Glowinski1984}. The unilateral Poisson obstacle problem, which models the equilibrium state of an elastic membrane constrained to lie above a given obstacle, is a canonical example and the focus of the present work.

The proximal Galerkin (PG) method~\cite{keith2023proximal} is a finite element framework that combines the Bregman proximal point method~\cite{chen1993convergence} with the finite element method, and acts simultaneously as an iterative algorithm and a discretization scheme. The PG method exhibits mesh-independent iteration counts, meaning that the number of nonlinear iterations to reach a prescribed accuracy remains bounded independently of the mesh size~\cite{papadopoulos2024hierarchical, dokken2025latent, keith2025priori}. In fact, the error analysis for PG methods shows that the optimization error is decoupled from the discretization error \cite{keith2025priori}.  The PG framework has been applied to multiple free-boundary problems, fracture modeling, gradient constraints, topology optimization, and the enforcement of discrete maximum principles~\cite{keith2023proximal, dokken2025latent,kim2025simple,papadopoulos2024hierarchical}. Most recently, the PG framework has been extended to non-symmetric variational inequalities, including American option pricing~\cite{fu2026proximal}. 

Most of the studied PG methods rely on \textit{conforming} finite element spaces \cite{keith2023proximal,dokken2025latent,kim2025simple}. One exception is the first-order system proximal Galerkin method (FOSPG) proposed for elliptic obstacle problems in \cite{fu2026locally} and then extended to non-symmetric advection--diffusion variational inequalities in \cite{fu2026proximal}. The latter results in a nonconforming\ PG method with upwinding. Convergence guarantees for the conforming PG methods are found in \cite{keith2025priori} and for the FOSPG methods in \cite{fu2026locally,fu2026proximal}. While the framework incorporates high-order discretization schemes naturally \cite{keith2023proximal,dokken2025latent}, we note that all error analyses of the method to date have been limited to establishing first-order spatial convergence rates. 

In this paper, we further extend the PG framework to \emph{nonconforming} finite element discretizations, focusing on the unilateral Poisson obstacle problem for simplicity. 
All the discretization schemes rely on broken finite element spaces and loosely fit into the broad category of discontinuous Galerkin (DG) methods. Specifically, we consider here four instances of such methods. \textup{(i)} The symmetric interior penalty DG (IPDG) method analyzed for elliptic problems in \cite{arnold2002unified} (see also \cite{Baker:77,Wheeler:78} for seminal works and \cite{riviere2008discontinuous,di2011mathematical} for two textbooks on the topic). 
\textup{(ii)} The enriched Galerkin (EG) method, originally introduced in \cite{Ischia:03} and further developed in, e.g., \cite{Sun_Liu:09,Lee_Lee_Wheeler:16}.
\textup{(iii)} The hybridizable interior penalty (H-IP) DG method, which can be traced back to \cite{Lab_Wel:07} (called therein interface stabilized finite element method) and \cite{Oikawa:10} (where a lifting operator is additionally employed in the stabilization), but has since been further developed in, e.g., \cite{Rhe_Wells:17,EtFaFR:22,KirkRiviereMasri2023}.
\textup{(iv)} The hybrid high-order (HHO) method, which was introduced in \cite{DiPEL:14} for linear diffusion and in \cite{DiPEr:15} for locking-free linear elasticity problems. Note that HHO was bridged to hybridizable DG methods in \cite{CoDPE:16}. For a gentle introduction to HHO methods, we refer the reader to the textbook \cite{CiErPi:21}. 
The four above discretization methods can be grouped into two sub-classes based on the presence of facet variables. In particular, IPDG and EG solely rely on discrete cell unknowns living in a broken polynomial space, whereas the H-IP and HHO methods additionally introduce facet unknowns living in a suitable broken polynomial space.
Finally, we notice that we do not explicitly consider DG methods involving flux variables that approximate $-\nabla u$, such as the local DG method \cite{LDG_98}. Many such methods can be recast in an abstract primal formulation by means of discrete lifting operators, following \cite{arnold2002unified}. Likewise, hybridized flux formulations, such as HDG methods \cite{cockburn2009unified}, can be interpreted as HHO methods \cite{CoDPE:16}. Consequently, the framework developed in this paper applies to DG methods with flux formulations as well.

\subsection{Main contributions} Our main contributions can be summarized as follows: 
\begin{itemize}
    \item We introduce the class of proximal discontinuous Galerkin (DG) methods and unify its error analysis into a common abstract framework. This framework is applicable beyond the four specific methods that we consider herein (IPDG, EG, H-IP and HHO).

    \item We generalize the \textit{a priori} error analysis in \cite{keith2025priori} to the nonconforming setting. In particular, we show best-approximation-type results and error rates in a suitable energy norm. The error bound accounts for the optimization error and demonstrates that this error is decoupled from the spatial discretization error.
    
    \item We prove unique solvability of each nonlinear subproblem (\Cref{thm:existence}), monotone energy dissipation (\Cref{lemma:energy}), and error estimates for both primal and dual variables under minimal regularity assumptions (\Cref{thm:bestapprox} and \Cref{thm:multiplier_convergence}), with improved estimates under mild additional regularity assumptions (\Cref{thm:general_error}).

\item For the case of the proximal HHO method with piecewise constant cell unknowns and piecewise affine facet unknowns, we demonstrate that the locally reconstructed solution yields an energy error estimate of spatial order $h^{\frac32 -\epsilon}$ for $\epsilon >0$. This is the first rigorous higher-order convergence result for any PG method.
\end{itemize}

\subsection{Outline} The remainder of the paper is organized as follows. The model problem and notation are introduced in \Cref{sec:model}. \Cref{sec:Bregman_proximal} presents the continuous-level Bregman proximal point method from which the PG framework derives. The four proximal DG methods are introduced in a unified template in \Cref{sec:PDG}. \Cref{sec:error_analysis} develops the abstract error analysis, including well-posedness, energy dissipation, best approximation, and convergence error rates. \Cref{sec:specific_methods} applies the abstract framework to each specific discretization. 
The paper ends with numerical experiments verifying the theoretical predictions in Section~\ref{sec:num} followed by some brief concluding remarks in Section~\ref{sec:conc}.

\section{Model problem}\label{sec:model}
Let $\Omega\subset\mathbb{R}^n$, $n \in \{2,3\}$, be an open, bounded Lipschitz domain. Throughout this article, we use the standard notation for the Sobolev--Hilbert spaces $H^m(\Omega)$, $m \geq 1$, $m \in \mathbb N$. For non-integer $s$, the notation $H^{s}(\Omega)$ denotes the Sobolev--Slobodeckij spaces. The space $V:=H^1_0(\Omega)$ is composed of functions in $H^1(\Omega)$ with vanishing trace on the boundary of $\Omega$; i.e., $\operatorname{tr}(v) = 0 \operatorname{ on } \partial \Omega$.  The dual space of $H^1_0(\Omega)$ is denoted by $H^{-1}(\Omega)$. We use the notation $(\cdot, \cdot)$ to denote the $L^2(\Omega)$ inner product. 
  
We focus the presentation on the following unilateral Poisson obstacle problem:
\begin{equation}\label{eq:objective}
\min_{v \in K} E(v), \quad E(v):= \frac{1}{2} \|\nabla v\|_{L^2(\Omega)}^2 -  (f, v) ,   
\end{equation}
where $f \in L^2(\Omega)$ is given and the feasible set $K$ is
\begin{equation}
K := \{ v \in V \mid v \geq  \phi  \text{ a.e. in } \Omega \},
\end{equation}
where $\phi \in H^1(\Omega) \cap C^0(\overline \Omega)$ with $\phi \leq 0$ on $\partial \Omega$. The set $K$ is a nonempty, closed and convex subset of $V$.
We define the bilinear form  $a(\cdot,\cdot): H^1_0(\Omega) \times H^1_0(\Omega) \to \mathbb{R}$ and the linear form $\ell: L^2(\Omega) \to \mathbb{R}$ as
\begin{equation}
    a(u,v) := (\nabla u, \nabla v), \quad \ell(v) := (f, v).  
\end{equation} 
The solution $u \in K$ to \eqref{eq:objective} is uniquely characterized by the following variational inequality:
\begin{equation}\label{eq:vi}
    a(u,v-u)-\ell(v-u)\geq 0  \quad ~\fa v\in K.
\end{equation}
The unique Lagrange multiplier is denoted by $\lambda \in H^{-1}(\Omega)$ and satisfies 
\begin{align}\label{eq:conti-lambda}
\langle \lambda, v \rangle  = a(u,v) - \ell(v) ~ \fa v \in H^1_0(\Omega).
 \end{align}
In the sequel, we require the following set of observables:
\begin{equation}
    \mathcal{O} := \{ o \in L^2(\Omega) \mid o(x) \in [\phi(x), \infty) \text{ for almost every } x \in \Omega \}.
 \label{eq:ConstrainedObservables}
 \end{equation}
 
\section{The Bregman proximal point method}\label{sec:Bregman_proximal} 
This section introduces the necessary tools from convex analysis to define the Bregman proximal point method for the unilateral Poisson obstacle problem \eqref{eq:objective}. The nonconforming methods that we present and later analyze are a discretization of this method. 
\subsection{Legendre functions} \label{subsec:Legendre} A key component of the proximal Galerkin framework is the functional $\mathcal{R}^*$, which maps $L^\infty(\Omega)$ to $\mathcal{O}$ via its gradient. This functional is constructed using the notion of Legendre functions \cite{RTRockafellar_1970}. 
In this context, we define a Legendre function $L: \R \rightarrow \R \cup \{+\infty\}$ as a proper, strictly convex function that is differentiable on the interior of its essential domain, $\operatorname{int} (\operatorname{dom} L) \neq \emptyset$, with $\operatorname{dom} L := \{y \in \mathbb R \mid L(y) < +\infty \}$, and whose gradient is singular only at the boundary of $\operatorname{dom} L$.

Let $R: \Omega \times \R  \rightarrow \R \cup \{+ \infty\}$ be a Carath\'eodory function such that, for almost every $x \in \Omega$, the map $ R(x, \cdot)$ is a Legendre function with essential domain  $[\phi(x), \infty)$. 
We define the associated superposition operator
\begin{equation}
    \mathcal{R}(u)(x) := R(x,u(x)), \quad \forall x \in \Omega, \,  \forall u \in \mathcal{O}, \label{eq:Legendere_0}
\end{equation}    
whose gradient is given by 
\begin{equation}
\nabla \mathcal{R}(u)(x) = \partial_u R(x,u(x)). \label{eq:grad_legendre}
\end{equation} 
Note that $\operatorname{dom}(\nabla \mathcal{R}) = \operatorname{int} (\operatorname{dom}(\mathcal R)).$
The convex conjugate of $R(x,\cdot)$ determines the conjugate superposition operator $\mathcal{R}^*$ as follows:
\begin{equation}\label{eq:convex_conj_S}
    R^*(x,z)
    := \sup_{y \in \mathbb{R}}  \big\{ zy - R(x,y) \big\}.
\end{equation} 
Throughout this work, we assume that $R(x,\cdot)$ is supercoercive,  $R(x, y)/|y| \rightarrow \infty$ as $|y| \rightarrow \infty$ for almost every $x \in \Omega$.
This assumption guarantees that $R^*(x,\cdot)$ is both well-defined and continuously differentiable on $\R$ \cite[Proposition 2.16]{bauschke1997legendre}; see also \cite[Corollary 13.3.1]{RTRockafellar_1970}.
Furthermore, both $R(x,\cdot)$ and $R^*(x,\cdot)$ are essentially strictly convex on $\mathbb{R}$.
The superposition operator is $\mathcal{R}^*(\psi)(x):=R^*(x,\psi(x))$ with gradient given by $\nabla \mathcal{R}^*(\psi)(x) = \partial_{z} R^*(x , \psi(x)) $. The gradient $\nabla \mathcal{R}^*$ is strictly monotone and continuous on $L^\infty(\Omega)$, so that
\begin{equation} \label{eq:monotonicity}
    (\nabla \mathcal{R}^*(\psi) - \nabla \mathcal{R}^*(\xi), \psi - \xi) \geq 0 \quad \forall \psi, \xi \in L^\infty(\Omega),
\end{equation}
with equality if and only if $\psi = \xi$.
We recall the fundamental property of Legendre functions established in \cite{rockafellar1967conjugates}:
\begin{equation} \label{eq:conj_grad_inverse}
 \nabla \mathcal{R}^*  = (\nabla \mathcal{R})^{-1}.
\end{equation} 
This relation implies that $\operatorname{dom}(\nabla \mathcal R) = \operatorname{im}(\nabla \mathcal R^*) \subset \mathcal{O}$. In particular, we have $ \nabla \mathcal R^* (\psi)(x) \in (\phi(x), \infty)$ for almost every $x \in \Omega$.
Explicit examples are provided in \Cref{example:shannon_entropy,example:softplus}; additional cases can be found in \cite[Table 1]{dokken2025latent}. We focus on unilateral constraints here, but the framework easily accommodates bilateral constraints by incorporating the Fermi–Dirac entropy \cite[Section~5.1]{keith2023proximal}.

Using the identity \eqref{eq:conj_grad_inverse}, we can introduce a latent representation for any observable variable $u \in \operatorname{dom}(\nabla \mathcal{R}) \subset \mathcal{O}$. Specifically, we define the latent variable $\psi$ via the relation:
\begin{equation}
\psi = \nabla \mathcal{R} ( u) \iff \nabla \mathcal{R}^*(\psi) = u.
\label{eq:latent_primal}
\end{equation}

\begin{example}[Shannon entropy] \label{example:shannon_entropy}
Define
\[ 
R(x,y) :=\begin{cases} (y - \phi(x)) \ln (y - \phi(x)) - (y -\phi(x))&\text{ when }y> \phi(x),\\+\infty&\text{ otherwise.}
\end{cases}
\]
The corresponding superposition operator is 
\begin{equation*}
\mathcal{R}(u) := (u -\phi) \ln (u -\phi) - (u - \phi). 
\end{equation*}
We deduce that  $\nabla \mathcal{R}(u) = \ln (u - \phi)$ whenever 
\[ 
u \in \operatorname{dom}(\nabla \mathcal{R}) = \{ u \in L^{\infty}(\Omega) \mid \operatorname{ess} \operatorname{inf} (u - \phi) > 0 \}. 
\]
A simple computation shows that $R^*(x,z) = \exp(z) + \phi(x) z$ so that
\begin{equation}
    \mathcal{R}^*(\psi) = \exp(\psi) + \phi \psi, \quad \quad \nabla \mathcal{R}^* (\psi) = \exp(\psi) + \phi.   \label{eq:shannon}
\end{equation}
Note that $\nabla \mathcal{R}^*$ is well defined on $L^{\infty}(\Omega)$, strictly monotone, and $\nabla \mathcal{R}^*(\psi) > \phi$ a.e.\ in $\Omega$.
\end{example}

\begin{example}[Softplus] \label{example:softplus}
Define
\[
R(x,y) := \begin{cases} (y - \phi(x)) \ln(e^{y-\phi(x)} - 1) + \operatorname{Li}_2(1 - e^{y-\phi(x)})  &\text{ when } y > \phi(x), \\ +\infty & \text{ otherwise, } \end{cases}
\]
where $\operatorname{Li}_2(z) = -\int_0^z \frac{\ln(1-t)}{t}\,\mathrm{d}t$ is the dilogarithm.
The corresponding superposition operator satisfies $\nabla \mathcal{R}(u) = \ln(e^{u-\phi} - 1)$ whenever 
\[
u\in \operatorname{dom}(\nabla \mathcal{R}) = \{ u \in L^\infty(\Omega) \mid \operatorname{ess\,inf}(u - \phi) > 0 \}.
\]
A direct computation shows that $R^*(x,z) = \phi(x) z - \operatorname{Li}_2(-e^z)$, so that
\begin{equation}
    \mathcal{R}^*(\psi) = \phi\,\psi - \operatorname{Li}_2(-e^\psi), \quad \nabla \mathcal{R}^*(\psi) = \phi + \ln(1 + e^\psi). \label{eq:softplus}
\end{equation}
Note that $\nabla \mathcal{R}^*$ is well defined on $L^\infty(\Omega)$, strictly monotone, and $\nabla \mathcal{R}^*(\psi) > \phi$ a.e.\ in $\Omega$.
\end{example}

\subsection{Bregman divergence}
The Legendre functions defined in \Cref{subsec:Legendre} allow one to define the Bregman divergence, which is the next key component of the proximal Galerkin framework \cite[Section~4.3]{keith2023proximal}.  For $u \in \mathrm{dom}(\mathcal{R})$ and $v \in \mathrm{dom}(\nabla \mathcal{R})$, the Bregman divergence is given by the error in the following first-order Taylor expansion:
\begin{equation}
    \label{eq:Bregman}
    \mathcal{D}(u,v) := \mathcal{R}(u) - \mathcal{R}(v) - \nabla \mathcal{R}(v)(u - v) . 
\end{equation}
Recalling that $R(x,\cdot)$ is strictly convex,
$\mathcal{D}(u,v) \geq 0$ a.e.~in $\Omega$, and equality is attained if and only if $u=v$.
We also recall the three-point identity \cite[Lemma 3.1]{chen1993convergence}:
\begin{align}
\mathcal{D}(u,v) - \mathcal{D}(u,w) + \mathcal{D}(v,w) & = (\nabla \mathcal{R}(v) -\nabla \mathcal{R}(w)) ( v- u).   \label{eq:three_point_iden}
\end{align}

\subsection{Bregman proximal point method}\label{subsec:BPP}
Given the Bregman divergence \eqref{eq:Bregman}, the Bregman proximal point method is an iterative method to approximate the solution to the unilateral Poisson obstacle problem~\eqref{eq:objective}. The method reads as follows \cite{chen1993convergence,keith2023proximal}: Given $u^{0} \in \operatorname{dom}(\nabla \mathcal{R})$ and an unsummable sequence of positive proximity parameters $(\alpha_k)_{k\geq 1}$, find
\begin{equation}\label{eq:BPP}
    u^k := \operatorname*{arg\,min}_{v \in K} \left\{ \alpha_k E(v) + \int_{\Omega} \mathcal{D}(v, u^{k-1}) \, \mathrm{d}x \right\}, \qquad k = 1,2,\ldots
\end{equation}
Since $E$ is strongly convex and $\mathcal{D}(\cdot, u^{k-1})$ is strictly convex, \eqref{eq:BPP} always has a unique solution. In fact, one can also demonstrate that $u^k \in \operatorname{dom}(\nabla \mathcal{R})$ \cite[Appendix A.3]{keith2023proximal}. Setting the latent variable $\psi^k = \nabla \mathcal{R}(u^k)$, the optimality conditions together with the identity $\nabla \mathcal{R}^* = (\nabla \mathcal{R})^{-1}$ from \eqref{eq:conj_grad_inverse} provide an equivalent algorithm: Given $\psi^{0} \in L^\infty(\Omega)$ and an unsummable sequence of positive proximity parameters $(\alpha_k)_{k\geq 1}$, find $u^k \in V:=H^1_0(\Omega)$ and $\psi^k \in W:=L^\infty(\Omega)$ such that
\begin{subequations}\label{eq:BPP_saddle}
\begin{alignat}{2}
     a(u^k, v) + \frac{1}{\alpha_k}(\psi^k - \psi^{k-1}, v) &= \ell(v) &&~\fa v \in V, \label{eq:BPP_saddle1}\\
     (u^k, w) - ( \nabla \mathcal{R}^*(\psi^k) , w) &= 0 &&~\fa w \in W. \label{eq:BPP_saddle2}
\end{alignat}
\end{subequations}
We refer the interested reader to \cite{keith2023proximal} for a detailed analysis of the above method applied to the unilateral Poisson obstacle problem, establishing well-posedness and convergence in $V$ under additional assumptions. The proximal DG methods introduced below are obtained by discretizing \eqref{eq:BPP_saddle} with nonconforming spaces $V_h \not \subset V$. 

\section{The proximal discontinuous Galerkin methods}
\label{sec:PDG}

We introduce various proximal DG methods for the unilateral Poisson obstacle problem \eqref{eq:objective}. We begin by presenting the required notation before briefly summarizing each method. The methods are subsequently presented in a general and unified manner.

\subsection{Notation}
Let $\mathcal{T}_h$ be a conforming shape-regular simplicial partition of $\Omega$.
We consider two finite-dimensional vector spaces, $V_h$ and $W_h$. In general, the space $V_h$ is nonconforming, namely $V_h \not \subset V$, while the space $W_h$ is conforming, namely $W_h \subset L^\infty(\Omega)$. Note that we allow $V_h$ to be a product space when considering hybrid discretizations.
We denote by $\mathcal{F}_h$ (resp.\ $\mathcal{F}_h^\partial$) the set of interior (resp.\ boundary) facets of $\mathcal{T}_h$.
For each interior face $F$, we fix a unit normal vector $\bfn_F$ and denote by $T_-$ and $T_+$ the neighboring elements such that $\bfn_F$ points from $T_-$ to $T_+$. The jump and average operators are canonically defined by:
\[
[v]|_F  = v|_{T_-} - v|_{T_+}, \quad 
\{v\}|_F = \frac12 (v|_{T_-} + v|_{T_+}), \quad F = \partial T_- \cap \partial T_+.
\]
By convention, if $F$ is a boundary face, then the unit normal vector $\bfn_F$ is chosen to point outward from $\Omega$, whereas the jump and average of a function on $F$ both reduce to its trace.  In what follows, we drop the subscript $F$ from the jump and average operators when $F$ is clear from the context. 
For each triangle $T$, the vector $\bfn_T$ is the unit normal vector pointing outward from $T$. 
The $L^2$ inner-product on an element $T$ (resp.\ face $F$) is denoted by $(\cdot,\cdot)_T$  (resp.\ $(\cdot,\cdot)_F$). 
We denote by $h_T$ the diameter of an element $T \in \mathcal{T}_h$, by $h_F$ the diameter of a face $F \in \mathcal{F}_h$, and by $h := \max_{T \in \mathcal{T}_h} h_T$ the mesh size.

We will make use of the broken $H^1$-space: 
    \begin{equation}
    H^1(\mathcal{T}_h) := \{v \in L^2(\Omega) \mid v \vert_T \in H^1(T)\;\forall T \in \mathcal{T}_h \}.   
    \end{equation}
We define the classical DG norm on the space $H^1(\mathcal{T}_h)$ as 
\begin{equation} 
\label{eq:DGnorm}
\|v\|_{\DG}^2 := \sum_{T \in \mathcal{T}_h}\|\nabla v\|_{L^2(T)}^2 + \sum_{F \in \mathcal{F}_h \cup \mathcal{F}_h^\partial} h_F^{-1}\|[v]\|_{L^2(F)}^2, \quad \forall v \in H^1(\mathcal{T}_h).
\end{equation}
For every integer $p \geq 0$, the broken polynomial space on $\mathcal{T}_h$ is defined as
\begin{equation}
\mathbb{P}^p(\mathcal{T}_h) := \{ v \in L^2(\Omega) \mid v|_T \in \mathbb{P}^p(T) \; \forall T \in \mathcal{T}_h \},
\end{equation}
and the facet polynomial space with zero values on boundary facets is defined as
\begin{equation}
\mathbb{P}^p(\mathcal{F}_h) := \{ \mu \in L^2(\mathcal{F}_h \cup \mathcal{F}_h^\partial) \mid \mu|_F \in \mathbb{P}^p(F) \; \forall F \in \mathcal{F}_h \text{ and } \mu \vert_F = 0 \; \forall F \in \mathcal{F}_h^\partial \}.
\end{equation}
For each element $T \in \mathcal{T}_h$, we denote the set of faces of $T$ by $\mathcal{F}_T$ and define the local facet polynomial space
\begin{equation}
\mathbb{P}^p(\mathcal{F}_T) := \{ \mu \in L^2(\partial T) \mid \mu|_F \in \mathbb{P}^p(F) \; \forall F \in \mathcal{F}_T \}.
\end{equation}

\subsection{Proximal IPDG and EG}\label{sec:dG_intro}
We begin with the symmetric interior penalty discontinuous Galerkin (IPDG) method. The vector spaces for the primal and latent variables are given, respectively, by
\begin{equation}
V_h := V_{\mathcal{T}_h}:= \mathbb{P}^1(\mathcal{T}_h), \qquad W_h := \mathbb{P}^0(\mathcal{T}_h).
\end{equation} 
The symmetric IPDG bilinear form $a_h: V_h \times V_h \rightarrow \R$ is written explicitly in~\eqref{eq:ipdg_form}. 
Given $\psi_h^{0} \in W_h$ and an unsummable sequence of positive proximity parameters $(\alpha_k)_{k \geq 1}$, the proximal IPDG method reads: find $u_h^k \in V_h$ and $\psi_h^k \in W_h$ such that
\begin{subequations}
\begin{alignat}{2} 
 a_h(u_h^k, v_h) + \frac{1}{\alpha_k}( v_h , \psi_h^k - \psi_h^{k-1}) &=  (f, v_h), \quad &&\forall v_h \in V_h, \label{eq:dG_intro_1} \\
( u_h^k , w_h)
- (\nabla \mathcal{R}^\ast(\psi_h^k), w_h) &= 0, \quad
&&\forall w_h \in W_h. \label{eq:dG_intro_2}
\end{alignat}
\end{subequations}
The enriched Galerkin (EG) method enriches the conforming Lagrange element with piecewise constants. It uses the same bilinear form as IPDG, but with the spaces  
\begin{equation}
V_h := V_{\mathcal{T}_h}:= (\mathbb{P}^1(\mathcal{T}_h) \cap H_0^1(\Omega)) + \mathbb{P}^0(\mathcal{T}_h),
\qquad W_h := \mathbb{P}^0(\mathcal{T}_h).
\end{equation} 

\begin{remark}[Non-symmetric variants]
Non-symmetric or incomplete interior pe\-nalty DG can also be incorporated. However, since $a_h(\cdot,\cdot)$ is no longer symmetric, there is no corresponding discrete energy. Therefore, these variants require a different analysis approach, see \cite{fu2026proximal}, where error estimates for the weighted averages of the iterates can be obtained for the conforming and FOSPG discretizations.  The analysis for non-symmetric variants in the nonconforming setting is beyond the scope of this paper.
\end{remark}

\subsection{Proximal H-IP and HHO methods}\label{sec:hybrid_intro}
We also consider the class of hybrid methods, in which $V_h$ is a product space composed of element unknowns and facet unknowns. Specifically, we set for $\ell,r\in\{0,1\}$, 
\begin{equation}
V_h := V_{\mathcal{T}_h}\times V_{\mathcal{F}_h} 
:= \mathbb{P}^\ell(\mathcal{T}_h) \times \mathbb{P}^r(\mathcal{F}_h), \qquad
W_h := \mathbb{P}^0(\mathcal{T}_h).
\end{equation}
A generic function in $V_h$ is denoted by $v_h := (v_{\mathcal{T}_h}, v_{\mathcal{F}_h})$.
The hybridizable interior penalty (H-IP) and hybrid high-order (HHO) methods both fall into this category with $(\ell,r)=(1,1)$ and $(\ell,r)\in\{(0,0),(0,1),(1,1)\}$, respectively.

Given $\psi_h^{0} \in W_h$ and an unsummable sequence of positive proximity parameters $(\alpha_k)_{k \geq 1}$, the proximal hybrid DG methods read: find $u_h^k := (u_{\mathcal{T}_h}^k, u_{\mathcal{F}_h}^k) \in V_h$ and $\psi_h^k \in W_h$ such that 
\begin{subequations}
\begin{alignat}{2} 
 a_h(u_h^k, v_h) + \frac{1}{\alpha_k}( v_{\mathcal{T}_h} , \psi_h^k - \psi_h^{k-1}) &=  (f, v_{\mathcal{T}_h}), \quad &&\forall v_h \in V_h, \label{eq:hybrid_intro_1}\\
( u_{\mathcal{T}_h}^k , w_h)
-(\nabla \mathcal{R}^\ast(\psi_h^k), w_h) &= 0, \quad
&&\forall w_h \in W_h. \label{eq:hybrid_intro_2}
\end{alignat}
\end{subequations}
The specific bilinear forms for the H-IP and HHO methods are provided in \Cref{sec:HIP,sec:HHO}, respectively.

\begin{remark}[Further HHO variant]
It is also possible to consider the polynomial parameters $(\ell,r)=(1,0)$ for the HHO method. This choice leads to a simpler form of stabilization, but yields the same error estimates as the choice $(\ell,r)=(0,0)$. 
\end{remark}

\subsection{Unified presentation of the proximal DG methods} 

We unify the presentation of the algorithm for the above approximation schemes, allowing us to present a unified analysis framework in the next section.

In all cases, a nonconforming subspace $V_{\mathcal{T}_h}\subset H^1(\mathcal{T}_h)$ is involved in the approximation of the primal variable, and the space for approximating the latent variable is always $W_h:=\mathbb{P}^0(\mathcal{T}_h)$. We notice that we have
\begin{equation}
\mathbb{P}^0(\mathcal{T}_h) \subseteq V_{\mathcal{T}_h} \subseteq \mathbb{P}^1(\mathcal{T}_h).
\end{equation}
In what follows, we consider a linear map
\begin{equation}\label{eq:sfp}
\sfp : V_h \rightarrow V_{\mathcal{T}_h}, \qquad
W_h \subseteq \sfp(V_h).
\end{equation}
For the IPDG and EG methods, $V_{\mathcal{T}_h}=V_h=\mathbb{P}^1(\mathcal{T}_h)$ and $\sfp$ is simply the identity. For the H-IP and HHO methods, we set $\sfp(v_h) := v_{\mathcal{T}_h}$ for all $v_h := (v_{\mathcal{T}_h}, v_{\mathcal{F}_h}) \in V_h$, so that $V_{\mathcal{T}_h} = \mathbb{P}^r(\mathcal{T}_h)$ with $r=1$ for H-IP and $r\in\{0,1\}$ for HHO. In all cases, the inclusion $W_h \subseteq \sfp(V_h)$ holds true.

We always consider a symmetric bilinear form $a_h: V_h \times V_h \rightarrow \R$ for simplicity. Explicit expressions are given in \Cref{sec:specific_methods}, and abstract assumptions are stated in \Cref{sec:error_analysis}.
For clarity,~\Cref{tab:methods} lists the equation reference corresponding to $a_h$ for each method considered.

With the above ingredients, the abstract proximal DG method reads as follows:
Find $u_h^{k} \in  V_{h}$ and $\psi_h^k \in W_h$ such that 
\begin{subequations} \label{eq:discrete_lvpp_bis}
\begin{alignat}{2} 
a_h(u_h^k, v_h) + \frac1{\alpha_k}(\sfp(v_h), \psi_h^k - \psi_h^{k-1}) & =  (f, \sfp(v_h)), \quad &&  \forall v_h \in V_h, \label{eq:genPG1_bis}\\
(\sfp(u_h^k), w_h) -(\nabla \mathcal{R}^\ast(\psi_h^k), w_h) & = 0, \quad && \forall w_h \in W_h.\label{eq:genPG2_bis}
\end{alignat}
\end{subequations}
In practice, we adopt the equivalent reformulation given in \Cref{alg:main_alg_discrete}.
Experience shows that formulation~\eqref{eq:discrete_lvpp} is more convenient for implementation, while~\eqref{eq:discrete_lvpp_bis} is more convenient for analysis.

\begin{algorithm}[htb]
\caption{The Proximal DG Method} 
\begin{algorithmic}[1
]\label{alg:main_alg_discrete}
    \State \textbf{input:} Initial latent solution guess $\psi_h^0  \in W_h$, an unsummable sequence of positive proximity parameters $(\alpha_k)_{k\geq1}$, and a strictly convex functional $\mathcal{R}^*$ with $\nabla \mathcal{R}^*: W_h \rightarrow \mathcal{O}$.
    \State \textbf{repeat}
    \State \quad Find $u_h^{k} \in  V_{h}$
and $\lambda_h^k \in W_h$ such that 
\begin{subequations} \label{eq:discrete_lvpp}
\begin{alignat}{2} 
 a_h(u_h^k, v_h) - (\sfp(v_h), \lambda_h^k)& =  (f, \sfp(v_h)), \quad &&  \forall v_h \in V_h, \label{eq:genPG1}\\
(\sfp(u_h^k), w_h)
-(\nabla \mathcal{R}^\ast(\psi_h^{k-1} - \alpha_k\lambda_h^k), w_h) & = 0, \quad && 
\forall w_h \in W_h.
\label{eq:genPG2}
\end{alignat}
\end{subequations} 
\State \quad Assign  \(k \gets k + 1\), $\psi_h^k = \psi_h^{k-1} - \alpha_k\lambda_h^k$.
    \State \textbf{until} a convergence test is satisfied.
\end{algorithmic}
\end{algorithm}
\begin{table}[!ht]
\centering
\caption{Summary of proximal DG methods.}
\label{tab:methods}
\renewcommand{\arraystretch}{1.6}
\begin{tabular}{@{\hskip 4pt}l@{\hskip 8pt}|@{\hskip 8pt}l@{\hskip 8pt}l@{\hskip 8pt}l@{\hskip 4pt}}
\toprule
 & & $V_h$ & $a_h$ \\
\midrule
\multirow{2}{*}{{Primal}} & IPDG & $\mathbb{P}^1(\mathcal{T}_h)$ & \eqref{eq:ipdg_form} \\
 & EG & $(\mathbb{P}^1(\mathcal{T}_h) \cap H_0^1(\Omega)) + \mathbb{P}^0(\mathcal{T}_h)$ & \eqref{eq:ipdg_form} \\
\midrule
\multirow{2}{*}{{Hybrid}} & H-IP & $\mathbb{P}^1(\mathcal{T}_h) \times \mathbb{P}^1(\mathcal{F}_h)$ & \eqref{eq:hip_form} \\
 & HHO & $\mathbb{P}^\ell(\mathcal{T}_h) \times \mathbb{P}^r(\mathcal{F}_h), \, \ell,  r \in \{0,1\}, \,\, \ell \leq r$ & \eqref{eq:hho_form} \\
\bottomrule
\end{tabular}
\end{table}

The proximal DG method provides a bound-preserving approximation to the true solution $u$, which we denote as
\begin{align}
o_h^k := \nabla \mathcal{R}^*(\psi_h^k).
\end{align}
By~\cref{eq:conj_grad_inverse}, this solution variable always satisfies the pointwise bound constraint $o_h^k(x) \in (\phi(x), \infty)$. Moreover,
since $\nabla \mathcal{R}^\ast = (\nabla \mathcal{R})^{-1}$ by \eqref{eq:conj_grad_inverse}, we infer that
\begin{equation}\label{eq:gradoh}
\psi_h^k = \nabla \mathcal{R}(o_h^k). 
\end{equation}
Furthermore, we can define the discrete Lagrange multiplier variable as 
\begin{align}
\lambda_h^k := \frac{\psi_h^{k-1} - \psi_h^k}{\alpha_k} \in W_h \subset L^2(\Omega) \hookrightarrow H^{-1}(\Omega).
\end{align}
The last embedding means that we interpret $\lambda_h^k \in H^{-1}(\Omega)$ via the duality pairing 
\begin{equation} 
\langle \lambda_h^k , v \rangle := (\lambda_h^k, v), \quad \forall v \in V.
\end{equation}

\begin{remark}[SPD structure of hybrid methods]
\label{rem:StaticCondensation}
For the hybrid methods, the structure of the stiffness matrix at each Newton iteration can be written as
\begin{equation*}
    \begin{bmatrix}
    A&B^T&-M^T\\B&C&\\M& &\alpha H
    \end{bmatrix},
\end{equation*}
where $H$ corresponds to the derivative of the second term in \eqref{eq:genPG2} and $\alpha$ is the current proximity parameter.
The two upper-left blocks stem from the discretization of the primal variable.
The methods we consider admit a symmetric positive definite (SPD) Schur complement $S:=-BA^{-1}B^{\mathrm{T}}+C$.
Since $\mathcal{R}^*$ is strictly convex and $W_h=\mathbb{P}^0(\mathcal{T}_h)$, $H$ is block-diagonal with SPD blocks. Therefore, we can locally eliminate cell degrees of freedom. The resulting global stiffness matrix $\tilde{S}$, involving only the facet degrees of freedom, is defined as $\tilde{S}:=-B\tilde{A}^{-1}B^{\mathrm{T}}+C$ where $\tilde{A}:=A+\frac{1}{\alpha}M^{\mathrm{T}}H^{-1}M,$ so that $\tilde{S}$ is again an SPD matrix.
\end{remark}

\section{Error analysis for proximal DG methods}
\label{sec:error_analysis}
In this section, we provide a unified error analysis of the proximal DG methods introduced in the previous section. 

\subsection{Basic setting}

We start by formulating some basic assumptions on the approximation setting, which we refine later on when we deal with the error analysis. We equip the discrete space $V_h$ with a norm $\|\cdot\|_{V_h}$ and we equip the discrete space $W_h$ with the $L^2$-norm. 
We make the following assumptions.

\begin{assumption}[Basic setting] \label{ass:basic}
The following holds: \textup{(i)} The discrete bilinear form $a_h$ is coercive and bounded:
\begin{subequations}
\begin{alignat}{2}
&a_h(v_h,v_h) \geq C_{\mathrm{coerc}} \|v_h\|^2_{V_h}, &\quad &\forall v_h \in V_h, \label{eq:coercivity_a} \\
&a_h(v_h,z_h) \leq C_{\mathrm{cont}} \Vert v_h \Vert_{V_h}\, \Vert z_h \Vert_{V_h}, &\quad &\forall v_h, z_h \in V_h.\label{eq:continuity_a}
\end{alignat}
\textup{(ii)} The linear map $\sfp:V_h\rightarrow V_{\mathcal{T}_h}$ is bounded: There is a constant $C_{\sfp}$ (uniform in $h$) so that
\begin{equation} \label{eq:bnd_sfp}
\|\sfp(v_h)\|_{\DG} \le C_{\sfp} \|v_h\|_{V_h}, \quad \forall v_h\in V_h.
\end{equation}
\textup{(iii)} There exists a linear map $\Pi_h: H^1(\mathcal{T}_h) \rightarrow V_h$ and a constant $C_{\Pi}$ (uniform in $h$) so that, for all $v\in H^1(\mathcal{T}_h)$,
\begin{equation}\label{eq:Pihprop}
(J_h v-v,w_h)=0 \; \forall w_h \in W_h \qquad \text{and} \qquad
\Vert  \Pi_h v \Vert_{V_h} \leq C_{\Pi} \|v\|_{\DG},
\end{equation} 
where we have set $J_h:=\sfp\circ \Pi_h : H^1(\mathcal{T}_h) \rightarrow V_{\mathcal{T}_h}$.
\end{subequations}
\end{assumption}

\begin{remark}[Map $J_h$]
In all the nonconforming methods considered herein, the linear map $J_h$ turns out to be
the $L^2$-orthogonal projection onto $V_{\mathcal{T}_h}$. We keep, however, an abstract notation for the sake of generality.
\end{remark}

\subsection{Existence, uniqueness and energy dissipation} \label{sec:basic_pty}

We start with some fundamental results on the proximal DG method: existence and uniqueness of the iterates and energy dissipation.

\begin{theorem}[Existence and uniqueness] For all $k \geq 1$, the system \eqref{eq:discrete_lvpp} has a unique solution $(u_h^k, \psi_h^k) \in V_h \times W_h$.  
\label{thm:existence}
\end{theorem}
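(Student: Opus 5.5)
The plan is to recast each step of \eqref{eq:discrete_lvpp_bis} (equivalently \eqref{eq:discrete_lvpp}) as the first-order optimality system of a strictly convex, coercive minimization problem posed on the finite-dimensional space $V_h \times W_h$. Then existence follows from coercivity and uniqueness from strict convexity.

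\textbf{Reduction to a latent problem.} Fix $k\ge1$ and $\psi_h^{k-1}\in W_h$. First I would eliminate $u_h^k$. For a given $\psi\in W_h$, equation \eqref{eq:genPG1_bis} is a linear problem in $u_h$ with right-hand side $v_h\mapsto (f,\sfp(v_h)) - \alpha_k^{-1}(\sfp(v_h),\psi-\psi_h^{k-1})$. This is a bounded linear functional on $V_h$ by \eqref{eq:bnd_sfp}, since all norms are equivalent in finite dimensions. By the coercivity \eqref{eq:coercivity_a} and Lax--Milgram, it has a unique solution $u_h(\psi)$, and this solution depends affinely on $\psi$. Writing $A_h$ for the operator induced by $a_h$ and $\sfp^*$ for the adjoint of $\sfp$ in the $L^2$ pairing restricted to $W_h$, we get $u_h(\psi) = A_h^{-1}\sfp^*\big(f - \alpha_k^{-1}(\psi-\psi_h^{k-1})\big)$. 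Substituting into \eqref{eq:genPG2_bis} leaves a single nonlinear equation $G(\psi)=0$ in $W_h$. Here $G(\psi)$ is the element of $W_h$ represented by $w_h \mapsto (\nabla\mathcal R^*(\psi),w_h) - (\sfp(u_h(\psi)),w_h)$.

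\textbf{Monotonicity and coercivity.} The map $\psi\mapsto -\big(\sfp(u_h(\psi)),\cdot\big)$ equals an affine term plus $\alpha_k^{-1}\,\Pi_{W}\sfp A_h^{-1}\sfp^*\psi$. The operator $\Pi_{W}\sfp A_h^{-1}\sfp^*$ is symmetric positive semidefinite because $a_h$ is symmetric and coercive. Together with \eqref{eq:monotonicity}, this makes $G$ the gradient of the functional
\[
\Phi(\psi) := \int_\Omega \mathcal R^*(\psi)\,\mathrm dx + \frac{1}{2\alpha_k}\, a_h\big(A_h^{-1}\sfp^*\psi, A_h^{-1}\sfp^*\psi\big) - (\text{linear terms in }\psi).
\]
The first term of $\Phi$ is strictly convex and $C^1$ on $W_h$. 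This holds because $R^*(x,\cdot)$ is strictly convex and $C^1$, and $W_h=\mathbb P^0(\mathcal T_h)$ consists of bounded functions. So $\Phi$ is strictly convex, and any critical point is unique. The same conclusion follows directly from the monotonicity computation: if $\psi_1,\psi_2$ both solve, testing the difference of the equations with $w_h=\psi_1-\psi_2$ and using \eqref{eq:genPG1_bis} tested with $v_h=u_1-u_2$ gives
\[
(\nabla\mathcal R^*(\psi_1)-\nabla\mathcal R^*(\psi_2),\psi_1-\psi_2) + \alpha_k\,a_h(u_1-u_2,u_1-u_2)=0 .
\]
Both terms are nonnegative, so both vanish. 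Strict monotonicity then forces $\psi_1=\psi_2$, and coercivity forces $u_1=u_2$.

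\textbf{Existence.} This is the main obstacle: it requires coercivity of $\Phi$ on $W_h$, i.e.\ $\Phi(\psi)\to\infty$ as $\|\psi\|\to\infty$. The quadratic term only helps in directions outside the kernel of $\sfp^*$ restricted to $W_h$. Using $W_h\subseteq\sfp(V_h)$ from \eqref{eq:sfp}, together with \eqref{eq:Pihprop} (for $w_h\in W_h$ we have $(J_h w_h - w_h, w_h)=0$), I expect $\sfp^*$ to be injective on $W_h$, so the quadratic term is positive definite. Concretely, $(\sfp(\Pi_h w_h),w_h)=\|w_h\|^2$, so $\sfp^*w_h\neq0$ whenever $w_h\neq0$. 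Once the quadratic term is positive definite, it dominates the linear terms. The remaining requirement is that $\int\mathcal R^*(\psi)$ is bounded below along rays, which holds because $\mathcal R^*$ is convex, hence bounded below by an affine function. Then $\Phi$ is coercive and continuous on the finite-dimensional space $W_h$, so it has a minimizer $\psi_h^k$, which satisfies $G(\psi_h^k)=0$. Setting $u_h^k=u_h(\psi_h^k)$ gives the solution. Equivalence with \eqref{eq:discrete_lvpp} follows from the definition $\lambda_h^k=(\psi_h^{k-1}-\psi_h^k)/\alpha_k$.
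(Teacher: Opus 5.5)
Your proposal is correct and follows essentially the same route as the paper. Your reduced functional $\Phi$ is, up to additive constants, exactly $-J(w)=-\mathcal{L}(\tilde v(w),w)$, obtained from the paper's Lagrangian after minimizing out $v$. Coercivity comes from the same affine minorant of $\mathcal{R}^*$ combined with $W_h\subseteq\sfp(V_h)$; your injectivity of $\sfp^*$ on $W_h$ is the paper's bound $\|w\|\le c_1\|\tilde v(w)\|_{V_h}+c_2$ in another guise. Uniqueness rests on the same monotonicity identity. The only real difference is presentational: by identifying the residual $G$ directly as $\nabla\Phi$, you make explicit the critical-point step that the paper delegates to a standard saddle-point reference.
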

\begin{proof}
The proof closely follows \cite[Theorem 3.1]{keith2025priori}.  We provide some details for the nonconforming setting for completeness. 
We first prove uniqueness. For a fixed $k$, assume that $u_h^k,\psi_h^k$ and $\tilde{u}_h^k, \tilde{\psi}_h^k$ are two solution pairs. We have by coercivity (see~\eqref{eq:coercivity_a})
\begin{align*}
C_{\mathrm{coerc}} \Vert u_h^k-\tilde{u}_h^k \Vert_{V_h}^2
& \leq a_h(u_h^k-\tilde{u}_h^k, u_h^k-\tilde{u}_h^k)
= -\frac{1}{\alpha_k} (\sfp(u_h^k-\tilde{u}_h^k), \psi_h^k-\tilde{\psi}_h^k)\\
& = -\frac{1}{\alpha_k} (\nabla \mathcal{R}^\ast(\psi_h^k)-\nabla \mathcal{R}^\ast(\tilde{\psi}_h^k), \psi_h^k-\tilde{\psi}_h^k) \leq 0,
\end{align*}
where the first equality follows from~\eqref{eq:genPG1_bis}, the second from~\eqref{eq:genPG2_bis}, and the last bound from the monotonicity of $\nabla \mathcal{R}^\ast$ (see \eqref{eq:monotonicity}). This implies that $u_h^k=\tilde{u}_h^k$. Then \eqref{eq:genPG1} yields 
\[
(\sfp(v_h), \psi_h^k-\tilde{\psi}_h^k) = 0, \quad \forall v_h\in V_h.
\]
To conclude that $\psi_h^k-\tilde{\psi}_h^k = 0$, we recall that $W_h \subseteq \sfp(V_h)$ (see~\eqref{eq:sfp}). 

Next, we prove existence. We define the Lagrangian $\mathcal{L}: V_h\times W_h \rightarrow \mathbb{R}$ so that
\[
\mathcal{L}(v,w) = \frac{\alpha_k}{2} a_h(v,v)
-\alpha_k (f, \sfp(v)) + (\sfp(v),w-\psi_h^{k-1}) - (\mathcal{R}^\ast(w),1),
\]
and observe that to prove existence, it suffices to find
a critical point of $\mathcal{L}$. For this purpose, we first
minimize $\mathcal{L}$ in the first variable. For any fixed $w\in W_h$, we solve for $\tilde{v} = \tilde{v}(w) \in V_h$ that satisfies
\[
\tilde{v}(w) = \argmin_{v\in V_h} \mathcal{L}(v,w).
\]
This is equivalent to solving for $\tilde v \in V_h$ satisfying 
\begin{equation}\label{eq:firstder}
a_h(\tilde{v},v) = (f, \sfp(v)) - \frac{1}{\alpha_k}(\sfp(v),w-\psi_h^{k-1}), \quad 
\forall v\in V_h.
\end{equation}
The coercivity of $a_h(\cdot,\cdot)$ and the boundedness of $\sfp$ imply the existence and uniqueness of $\tilde{v}\in V_h$ satisfying \eqref{eq:firstder}. In addition, it is easy to see that the solution map $W_h\ni w\mapsto \tilde{v}(w)\in V_h$ is bounded (uniformly in $h$).
Next, we define
\[
J(w) := \mathcal{L}(\tilde{v}(w),w), \quad \forall w\in W_h.
\]
Choosing $v=\tilde{v}(w)$ in \eqref{eq:firstder} and using the definition of $J$, we infer that
\[
J(w) = -\frac{\alpha_k}{2} a_h(\tilde{v}(w),\tilde{v}(w)) - (\mathcal{R}^\ast(w),1), \quad \forall w\in W_h.
\]
We want to show that there exists $w^\ast \in W_h$ such that
\begin{align}
    w^\ast = \argmax_{w \in W_h} J(w).
\label{eq:w_ast}
\end{align}
Since this is a finite-dimensional maximization problem, 
it suffices to show that $-J(w) \rightarrow \infty$ as $\|w\| \rightarrow \infty.$ 
To this end, we use the convexity of $\mathcal{R}^*$ (giving the subgradient inequality
$\mathcal{R}^\ast(w)\geq \mathcal{R}^\ast(0)-   \nabla \mathcal{R}^\ast (0)  w$
a.e.~in $\Omega$) and the coercivity property~\eqref{eq:coercivity_a} to obtain  
\begin{align*}
- J(w) \geq \frac{\alpha_k C_{\mathrm{coerc}}}{2} \|\tilde{v}(w)\|^2_{{V_h}} -(\nabla \mathcal{R}^\ast (0),  w) + (\mathcal{R}^\ast(0), 1).  
\end{align*}
To conclude, it suffices to show that there exists $c_1$ (possibly depending on $h$)
and $c_2$ so that $\|w\|\le c_1\|\tilde{v}(w)\|_{V_h} + c_2$. This, in turn, follows
from the fact that the inclusion $W_h \subseteq \sfp(V_h)$ implies the existence
of $\beta>0$ (possibly depending on $h$) so that, for all $w\in W_h$, there is $v'(w)\in V_h$
so that $\sfp(v'(w))=w$ and $\beta\|v'(w)\| \le \|w\|$. Then, 
testing \eqref{eq:firstder} with $v'(w)$ and invoking 
the continuity of $a_h$ (see \eqref{eq:continuity_a}) readily implies the expected bound. 
It follows that $w^\ast$ is well defined by \eqref{eq:w_ast}. 
Finally it remains to show that $(\tilde{v}(w^\ast),w^\ast)$ is a critical point of $\mathcal{L}$, but this is done using that $\mathcal{L}(v,\cdot)$ is concave, that the map $w\mapsto \tilde{v}(w)$ is continuous, and that $\mathcal{L}(\cdot,w)$ is continuous. Details are skipped as they are standard (see \cite[Exercise 7.16-4]{ciarlet2013linear}). 
\end{proof}

\begin{lemma}[Energy dissipation] \label[lemma]{lemma:energy}
Define the discrete energy 
\begin{equation}
    E_h(v) := \frac12  a_h(v,v) - (f, \sfp(v)), \quad \forall v \in V_h. 
\end{equation} 
The following property holds: For all $k \geq 1$, 
\begin{equation}
E_h(u_h^{k}) \leq E_h(u_h^{k-1}). 
\label{eq:energydis}
\end{equation}
\end{lemma}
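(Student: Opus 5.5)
Test \eqref{eq:genPG1_bis} with $v_h = u_h^{k-1}-u_h^k$. Since $a_h$ is symmetric,
\[
a_h(u_h^k, u_h^{k-1}-u_h^k) = \tfrac12 a_h(u_h^{k-1},u_h^{k-1}) - \tfrac12 a_h(u_h^k,u_h^k) - \tfrac12 a_h(u_h^{k-1}-u_h^k,u_h^{k-1}-u_h^k).
\]
Subtracting the linear terms, this gives
\[
E_h(u_h^{k-1}) - E_h(u_h^k) = \tfrac12 a_h(u_h^{k-1}-u_h^k,u_h^{k-1}-u_h^k) + \frac{1}{\alpha_k}\big(\sfp(u_h^k-u_h^{k-1}), \psi_h^{k-1}-\psi_h^k\big)\cdot(-1)\cdot(-1),
\]
that is, more precisely,
\[
E_h(u_h^{k-1}) - E_h(u_h^k) = \tfrac12 a_h(\delta,\delta) - \frac{1}{\alpha_k}\big(\sfp(\delta),\psi_h^k-\psi_h^{k-1}\big), \qquad \delta := u_h^{k-1}-u_h^k .
\]
The first term is nonnegative by the coercivity \eqref{eq:coercivity_a}. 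It therefore suffices to show that $\big(\sfp(u_h^k)-\sfp(u_h^{k-1}),\psi_h^k-\psi_h^{k-1}\big)\ge 0$.

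For this, the plan uses that $\psi_h^k-\psi_h^{k-1}\in W_h$. By \eqref{eq:genPG2_bis} at steps $k$ and $k-1$, we may replace $\sfp(u_h^j)$ by $\nabla\mathcal R^*(\psi_h^j)$ when paired with elements of $W_h$. This turns the term into
\[
\big(\nabla\mathcal R^*(\psi_h^k)-\nabla\mathcal R^*(\psi_h^{k-1}),\psi_h^k-\psi_h^{k-1}\big),
\]
which is nonnegative by the monotonicity \eqref{eq:monotonicity}. This yields \eqref{eq:energydis}, indeed with the quantitative dissipation $\tfrac12 a_h(\delta,\delta)$ plus a Bregman-type term.

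The main obstacle is the case $k=1$. There $u_h^0$ is not produced by the scheme, so the relation \eqref{eq:genPG2_bis} at index $0$ must be ensured. The natural convention is to define $u_h^0\in V_h$ compatibly with $\psi_h^0$, i.e.\ so that $(\sfp(u_h^0),w_h)=(\nabla\mathcal R^*(\psi_h^0),w_h)$ for all $w_h\in W_h$. Such a choice is possible since $W_h\subseteq\sfp(V_h)$: pick $u_h^0$ with $\sfp(u_h^0)$ equal to the $L^2$-projection of $\nabla\mathcal R^*(\psi_h^0)$ onto $W_h$. I would state this convention explicitly (or restrict to $k\ge2$ otherwise). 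With it, the argument above applies verbatim for all $k\ge1$.
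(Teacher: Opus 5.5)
Your proof is correct and follows the paper's argument. You expand the energy difference using the symmetry of $a_h$ and drop the nonnegative quadratic term by coercivity. You then test \eqref{eq:genPG1_bis} with the increment and apply \eqref{eq:genPG2_bis} at steps $k$ and $k-1$ with $w_h=\psi_h^k-\psi_h^{k-1}$, concluding by the monotonicity \eqref{eq:monotonicity}.

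Your remark on $k=1$ is well taken. The paper tacitly uses \eqref{eq:genPG2_bis} at index $0$, so $u_h^0$ must be chosen compatibly with $\psi_h^0$ as you describe. In fact only $k\ge 2$ is needed for the summation in the proof of \Cref{thm:bestapprox}.
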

\begin{proof}
The proof closely follows \cite[Lemma 3.2]{keith2025priori}. We provide it for completeness.
By symmetry of $a_h$ and by coercivity (see~\eqref{eq:coercivity_a}), we have  
\begin{align*}
E_h(u_h^{k}) - E_h(u_h^{k-1}) = &{}   a_h(u_h^{k}, u_h^{k} - u_h^{k-1}) - (f, \sfp(u_h^{k} - u_h^{k-1})) \\ & 
- \frac12 a_h(u_h^{k} - u_h^{k-1},  u_h^{k} - u_h^{k-1})  \\ 
\leq &  {} a_h(u_h^{k}, u_h^{k} - u_h^{k-1}) - (f, \sfp(u_h^{k} - u_h^{k-1})). 
\end{align*}
Thus, choosing $v_h = u_h^k-u_h^{k-1}$ in \eqref{eq:genPG1_bis}, subtracting \eqref{eq:genPG2_bis} for $k$ and $k-1$ and choosing $w_h = \psi_h^k-\psi_h^{k-1}$, we obtain 
\begin{align*}
E_h(u_h^{k}) - E_h(u_h^{k-1}) \leq  - \frac{1}{\alpha_{k}} (\nabla \mathcal{R}^*(\psi_h^{k}) -\nabla \mathcal{R}^*(\psi_h^{k-1}) , \psi_h^{k} - \psi_h^{k-1}) \leq 0,
\end{align*}
where the last inequality follows from the monotonicity property~\eqref{eq:monotonicity} of $\nabla \mathcal{R}^*$. 
\end{proof}

\subsection{Error analysis with minimal regularity} \label{sec:min_reg}

In what follows, we will invoke a bounded linear map $\mathcal{C}_h: L^2(\Omega)  \rightarrow \mathbb P^1(\mathcal{T}_h) \cap H^1_0(\Omega)$ that will be selected as a modified Clément-type interpolant in \Cref{sec:Ch}. This map will play the role of a smoother. In particular, the operator $\mathcal{C}_h$ allows us to define the consistency error $\delta_h(u) \in V_h'$ (where $V_h'$ is the dual space of $V_h$) 
as 
\begin{equation}
    \langle  \delta_h(u), v_h \rangle_{V_h',V_h} :=   a_h(\Pi_h u,v_h) - a(u, \mathcal{C}_h \sfp(v_h)), \quad \forall v_h \in V_h. 
    \label{eq:duality}
\end{equation} 
However, not any smoother operator is sufficient for our purposes as we need also to account for the feasible set $K$. In other words, we need a constraint-aware smoother $\mathcal{C}_h$. Such an operator is constructed in Section~\ref{sec:Ch}.

\begin{theorem}[Bound on discrete error] \label{thm:bestapprox}
For all $m \geq 1$, the following estimate holds: 
\begin{equation} \label{eq:main_estimate_discrete}
\begin{aligned}
&\frac{C_{\mathrm{coerc}}}{4} \|u_h^m - \Pi_hu\|^2_{V_h} + \frac{(\mathcal{D}(u, o_h^m),1)}{\sum_{k=1}^m \alpha_k} \\
& \leq \frac{ ( \mathcal{D}(u, o_h^0) ,1) }{\sum_{k=1}^m \alpha_k} + \frac{ 1}{C_{\mathrm{coerc}}} \|\delta_h(u)\|^2_{V_h'} + e_K (u_h^m) + e_h(u), 
\end{aligned}
\end{equation}
where $e_K$ and $e_h$ represent the feasibility and approximation errors, respectively defined as 
\begin{subequations} \begin{align}
e_K(u_h^m)  & := \inf_{v \in K} \Big\{ |a(u,  \mathcal{C}_h \sfp(u_h^m) -v) -  (f, \sfp(u_h^m) - v)| \Big\} \label{eq:e_K_feasibility},  \\ 
e_h(u) & := |a(u,\mathcal{C}_h J_h u-u) -  (f,J_h u-u) |.
\end{align} \end{subequations}
\end{theorem}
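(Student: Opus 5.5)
Our plan is to follow the conforming argument of \cite[Theorem 4.1]{keith2025priori}. We derive a one-step inequality for each iterate $k$, weight it by $\alpha_k$, and telescope. The Bregman three-point identity \eqref{eq:three_point_iden} handles the latent part, and energy dissipation (\Cref{lemma:energy}) lets us replace the weighted sum of errors by the last one.

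\textbf{Step 1 (one-step inequality).} Set $e^k := u_h^k - \Pi_h u$. By coercivity, $C_{\mathrm{coerc}}\|e^k\|_{V_h}^2 \le a_h(u_h^k,e^k) - a_h(\Pi_h u, e^k)$. We replace the first term using \eqref{eq:genPG1_bis}, which gives $(f,\sfp(e^k)) - (\lambda_h^k,\sfp(e^k))$ with $\lambda_h^k = (\psi_h^{k-1}-\psi_h^k)/\alpha_k$. For the second term, \eqref{eq:duality} gives $a_h(\Pi_h u,e^k) = \langle\delta_h(u),e^k\rangle + a(u,\mathcal{C}_h\sfp(e^k))$. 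Young's inequality gives $|\langle\delta_h(u),e^k\rangle| \le \frac{C_{\mathrm{coerc}}}{2}\|e^k\|^2 + \frac{1}{2C_{\mathrm{coerc}}}\|\delta_h(u)\|^2_{V_h'}$.

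Next we split $\sfp(e^k) = (\sfp(u_h^k) - u) + (u - J_h u)$. The part $u - J_h u$ contributes $-[a(u,\mathcal{C}_h J_h u - u) - (f,J_h u-u)]$ plus the multiplier term $(\lambda_h^k, u-J_hu)$. The latter vanishes by \eqref{eq:Pihprop}, since $\lambda_h^k\in W_h$. This piece is bounded by $e_h(u)$.

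The part $\sfp(u_h^k)-u$ is handled by inserting any $v\in K$. We write $a(u,\mathcal{C}_h\sfp(u_h^k)-u) - (f,\sfp(u_h^k)-u)$ as the sum of the term $a(u,\mathcal{C}_h\sfp(u_h^k)-v)-(f,\sfp(u_h^k)-v)$ and the term $a(u,v-u)-(f,v-u)$. The second term is $\ge 0$ by \eqref{eq:vi}, and it enters with a favorable sign (a minus in front), so it can be dropped. The first term gives $e_K(u_h^k)$ after taking the infimum over $v$.

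The remaining multiplier piece is $-(\lambda_h^k, \sfp(u_h^k) - u)$. Since $\lambda_h^k\in W_h$, \eqref{eq:genPG2_bis} lets us replace $\sfp(u_h^k)$ by $o_h^k$. Using $\psi_h^j = \nabla\mathcal{R}(o_h^j)$ and the three-point identity with $(u, o_h^k, o_h^{k-1})$, we get
\[
-\alpha_k(\lambda_h^k, o_h^k - u) = (\mathcal{D}(u,o_h^{k-1}),1) - (\mathcal{D}(u,o_h^{k}),1) - (\mathcal{D}(o_h^k,o_h^{k-1}),1),
\]
and the last term is $\le 0$. After multiplying by $\alpha_k$, the one-step inequality reads
\[
\tfrac{C_{\mathrm{coerc}}}{2}\alpha_k\|e^k\|^2 + (\mathcal{D}(u,o_h^k),1) \le (\mathcal{D}(u,o_h^{k-1}),1) + \alpha_k\Big[\tfrac{1}{2C_{\mathrm{coerc}}}\|\delta_h\|^2 + e_K(u_h^k) + e_h(u)\Big].
\]

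\textbf{Step 2 (telescoping and the main obstacle).} Summing over $k=1,\dots,m$ telescopes the divergences. The difficulty is that $e_K(u_h^k)$ and $\|e^k\|$ appear for every $k$, while the statement involves only $k=m$. For the error term, we plan to use energy dissipation: $E_h(u_h^k)\ge E_h(u_h^m)$ for $k\le m$. Instead of coercivity applied to $\|e^k\|^2$ directly, we redo Step 1 with the identity
\[
a_h(u_h^k,e^k) - a_h(\Pi_hu,e^k) \ge E_h(u_h^k) - E_h(\Pi_h u) + (f,\sfp(e^k)) + \tfrac12 a_h(e^k,e^k) - a_h(\Pi_hu,e^k),
\]
which follows by symmetry of $a_h$. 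This yields a bound on $E_h(u_h^k) - E_h(\Pi_hu)$ plus a $\frac12 a_h(e^k,e^k)$ term, up to the same consistency, feasibility and approximation terms.

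The quantity $E_h(u_h^k)-E_h(\Pi_hu)$ is monotone in $k$, so the $\alpha_k$-weighted sum of these quantities dominates $(\sum_k\alpha_k)$ times the value at $k=m$. Converting $E_h(u_h^m)-E_h(\Pi_hu)$ back into $\frac{C_{\mathrm{coerc}}}{4}\|e^m\|^2$ requires one more consistency/Young step; this is what halves the constant from $1/2$ to $1/4$ and leaves $\frac{1}{C_{\mathrm{coerc}}}\|\delta_h(u)\|^2$.

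For $e_K$, the bound involves $\sup_{k\le m}e_K(u_h^k)$ unless the feasibility term is also absorbed through the energy formulation. Our remedy is to pair the terms involving $u_h^k$ so that only $E_h(u_h^k)$ appears, applying the monotonicity argument to the combination $E_h(u_h^k) - [a(u,\mathcal{C}_h\sfp(u_h^k)) - (f,\sfp(u_h^k))]$. This is the step we expect to be delicate, and we would check it carefully against the corresponding argument in \cite{keith2025priori}. Finally, we divide by $\sum_{k=1}^m\alpha_k$ and use $(\mathcal{D}(u,o_h^m),1)\ge0$ to conclude.
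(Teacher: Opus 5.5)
There is a genuine gap, and it is the one you flagged yourself. Your Step 1 brings the continuous problem (consistency split, variational inequality, insertion of $v\in K$) into every iteration. After weighting by $\alpha_k$ and summing, you are therefore left with $\sum_{k=1}^m \alpha_k\, e_K(u_h^k)$ and $\sum_{k}\alpha_k\|u_h^k-\Pi_hu\|_{V_h}^2$. That is a weighted-average result, not the statement, which involves only $e_K(u_h^m)$ and $\|u_h^m-\Pi_hu\|_{V_h}$. Energy dissipation (\Cref{lemma:energy}) only gives monotonicity of $E_h(u_h^k)$. Nothing in the iteration makes $e_K(u_h^k)$ nonincreasing in $k$, and nothing controls the increments of your combination $E_h(u_h^k)-[a(u,\mathcal{C}_h\sfp(u_h^k))-(f,\sfp(u_h^k))]$. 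Moreover, $e_K$ is not of that linear form, because of the infimum and the absolute value. So the proposed remedy does not close the argument.

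The missing observation is already in your Step 2: the identity you write there is an equality, and $(f,\sfp(e^k))$ and $a_h(\Pi_hu,e^k)$ appear on both sides and cancel. Testing \eqref{eq:genPG1_bis} with $e^k$ gives directly
\[
E_h(u_h^k)-E_h(\Pi_hu)+\tfrac12 a_h(e^k,e^k) = (\lambda_h^k,\sfp(e^k)) = (\lambda_h^k,o_h^k-u).
\]
The three-point identity then yields $\alpha_k\big(E_h(u_h^k)-E_h(\Pi_hu)\big)+(\mathcal{D}(u,o_h^k),1)\le(\mathcal{D}(u,o_h^{k-1}),1)$, with no consistency, feasibility or approximation term at intermediate steps. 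Telescoping and energy dissipation give $\big(\sum_k\alpha_k\big)\big(E_h(u_h^m)-E_h(\Pi_hu)\big)+(\mathcal{D}(u,o_h^m),1)\le(\mathcal{D}(u,o_h^0),1)$. Only then is $E_h(u_h^m)-E_h(\Pi_hu)$ bounded from below, using the consistency split, the variational inequality with arbitrary $v\in K$, coercivity and Young. This is your ``one more consistency/Young step'', and it is correct. In this way $e_K$, $e_h$ and $\delta_h(u)$ enter only at $k=m$, which is exactly the paper's proof.

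A minor point: \eqref{eq:genPG1_bis} gives $a_h(u_h^k,e^k)=(f,\sfp(e^k))+(\lambda_h^k,\sfp(e^k))$, with a plus sign. Correspondingly, it is $+\alpha_k(\lambda_h^k,o_h^k-u)$ that equals the Bregman difference you wrote. Your two sign slips cancel, so your one-step inequality is nonetheless correct.
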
 

\begin{proof}
Our starting point is the following identity: For all $v,w \in V_h$:
\begin{equation}
\label{eq:expanding_energy}
E_h(v)-E_h(w) = a_h(w,v-w)+\frac{1}{2} a_h(v-w,v-w) -(f, \sfp(v-w)).
\end{equation}
Taking $v := \Pi_h u$ and $w := u_h^k$ in \eqref{eq:expanding_energy} and recalling that $J_h:=\sfp\circ \Pi_h$, we obtain
\[
E_h(\Pi_h u) = E_h(u_h^k)
+a_h(u_h^k,\Pi_h u-u_h^k)
+\frac{1}{2} a_h(\Pi_h u - u_h^k, \Pi_h u - u_h^k) -(f, J_h u - \sfp(u_h^k)).
\]
Invoking the coercivity property~\eqref{eq:coercivity_a}, we infer that
\begin{multline}
\label{eq:energy1}
0  \geq E_h(u_h^k) - E_h(\Pi_h u) + a_h(u_h^k,\Pi_h u - u_h^k) \\  - (f, J_h u - \sfp(u_h^k)) + \frac{C_{\mathrm{coerc}}}{2} \|\Pi_h u - u_h^k\|_{V_{h}}^2.
\end{multline} 
Testing \eqref{eq:genPG1_bis} with $\Pi_h u - u_h^k$, we obtain 
\begin{align*}
a_h(u_h^k, \Pi_h u - u_h^k) - (f, J_h u - \sfp(u_h^k)) = -\frac{1}{\alpha_k} (J_h u - \sfp(u_h^k),
\psi_h^k-\psi_h^{k-1}).
\end{align*}
Thus, dropping the rightmost term in~\eqref{eq:energy1} which is nonnegative, we infer that
\begin{align*}
E_h(u_h^k) - E_h(\Pi_h u) 
+ \frac{1}{\alpha_k} (J_h u - \sfp(u_h^k),
\psi_h^{k-1}-\psi_h^{k}) \leq 0.
\end{align*}
Following \cite{keith2025priori}, we use the property~\eqref{eq:Pihprop} of $J_h$, the second step of the proximal DG iteration \eqref{eq:genPG2_bis}, the relation \eqref{eq:gradoh} between $\psi_h^k$ and $o_h^k$, and the three-point identity \eqref{eq:three_point_iden} to obtain 
\begin{align*}
 \frac{1}{\alpha_k} (J_h u - \sfp(u_h^k),
\psi_h^{k-1}-\psi_h^{k})
 & = \frac{1}{\alpha_k} (u - o_h^k, \psi_h^{k-1}-\psi_h^k)
\\ & =
 \frac{1}{\alpha_k}( u - o_h^k,  \nabla \mathcal{R}(o_h^{k-1}) - \nabla \mathcal{R}(o_h^{k})) \\ 
 & = \frac{1}{\alpha_k} ( \mathcal{D}(u, o_h^k) - \mathcal{D}(u,o_h^{k-1}) + \mathcal{D}(o_h^k,o_h^{k-1}),1) . \nonumber
\end{align*}
Combining the above expressions and using the fact that $ \mathcal{D}(o_h^k,o_h^{k-1}) \geq 0 $ yields 
\begin{align*}
E_h(u_h^k) -E_h(\Pi_h u) & + \frac{1}{\alpha_k} ( \mathcal{D}(u, o_h^k) - \mathcal{D}(u,o_h^{k-1}),1)   
 \leq 0.  
\end{align*}
We multiply the above equation by $\alpha_k>0$, sum from $k = 1$ to $k = m$, and use the energy dissipation property \eqref{eq:energydis} to obtain
\begin{align}\label{eq:inter2}
    \bigg( \sum_{k=1}^m\alpha_k\bigg) \Big( E_h(u_h^m) -E_h(\Pi_h u) \Big)  +  (\mathcal{D}(u, o_h^m),1)     \leq (\mathcal{D}(u, o_h^0),1) .
\end{align}
Thus, upon dividing \eqref{eq:inter2} by $\sum_{k=1}^m \alpha_k$, we arrive at 
\begin{align}
    E_h(u_h^m) -E_h(\Pi_h u)& 
    + \frac{ ( \mathcal{D}(u, o_h^m),1 )  }{\sum_{k=1}^m \alpha_k}  \leq \frac{( \mathcal{D}(u, o_h^0),1 ) }{\sum_{k=1}^m \alpha_k}. \label{eq:inter3}
\end{align}
Next, we find a lower bound for $E_h(u_h^m)-E_h(\Pi_h u)$. 
Choosing $w=\Pi_h u$ and $v=u_h^m$ in \eqref{eq:expanding_energy} gives
\begin{align} \label{eq:energy_expansion_after_summing}
    E_h(u_h^m) -E_h(\Pi_h u) = {}& a_h(\Pi_hu , u_h^m - \Pi_h u) - (f, \sfp(u_h^m) - J_h u) \\ &+ \frac{1}{2} a_h(u_h^m - \Pi_hu , u_h^m - \Pi_h u). \nonumber
\end{align}
We further split the first term using the consistency error defined in~\eqref{eq:duality} as
\begin{equation*}
    a_h(\Pi_hu , u_h^m - \Pi_h u) = \langle \delta_h (u),  u_h^m - \Pi_h u \rangle_{V_h', V_h} + a( u,  \mathcal{C}_h (\sfp(u_h^m) - J_h u)). 
\end{equation*}
Now, for any $v \in K$, we write $\mathcal{C}_h \sfp(u_h^m) - \mathcal{C}_h J_h u =(\mathcal{C}_h \sfp(u_h^m) - v)+(v -u) +( u -  \mathcal{C}_h J_h u)$. Using this expansion and the variational inequality \eqref{eq:vi}, we obtain 
\begin{align*}
 a_h(\Pi_hu , u_h^m - \Pi_h u) - (f, \sfp(u_h^m) - J_h u)  \geq  {} & \langle \delta_h (u),  u_h^m - \Pi_h u \rangle_{V_h', V_h}  \\  & + a(u, \mathcal{C}_h \sfp(u_h^m) - v) + a(u, u -\mathcal{C}_h J_h u) \\  & 
 - (f, \sfp(u_h^m)-v) - (f, u - J_hu). 
\end{align*}
Thus, owing to the coercivity property \eqref{eq:coercivity_a}, we infer that 
\begin{align*}
E_h(u_h^m) - E_h(\Pi_h u)    \geq{} & \frac{C_{\mathrm{coerc}}}{2} \|u_h^m - \Pi_h u\|_{V_h}^2 +\langle \delta_h (u),  u_h^m - \Pi_h u \rangle_{V_h', V_h}  \\ & + a(u, \mathcal{C}_h \sfp(u_h^m) - v) + a(u, u -\mathcal{C}_h J_h u) \\  & 
- (f,\sfp(u_h^m)-v) - (f, u - J_hu). 
\end{align*}
Substituting the above into \eqref{eq:inter3}, we obtain  
\begin{align}
\frac{C_{\mathrm{coerc}}}{2} \|u_h^m - \Pi_h u\|^2_{V_h} + \frac{ ( \mathcal{D}(u, o_h^m),1 ) }{\sum_{k=1}^m \alpha_k} \leq{} &  \frac{( \mathcal{D}(u, o_h^0),1 )  }{\sum_{k=1}^m \alpha_k} +  \langle \delta_h (u),  \Pi_h u - u_h^m \rangle_{V_h', V_h}\\ \nonumber & + a(u, v - \mathcal{C}_h \sfp(u_h^m) ) + a(u, \mathcal{C}_h J_h u- u) \\ \nonumber & 
- (f,v- \sfp( u_h^m) ) - (f, J_h u-u). 
\end{align}
Using the definition of $\|\delta_h(u)\|_{V_h'}$ and Young's inequality gives
\begin{align}
    \langle \delta_h (u),  \Pi_h u - u_h^m \rangle_{V_h',V_h} & \leq \|\delta_h(u)\|_{V_h'} \|\Pi_h u - u_h^m\|_{V_h} \\ 
& \leq \frac{C_{\mathrm{coerc}}}{4} \|u_h^m - \Pi_h u\|^2_{V_h} + \frac{ 1}{C_{\mathrm{coerc}}} \|\delta_h(u)\|^2_{V_h'}. \nonumber
\end{align}
Combining the above bounds and taking the infimum over $v\in K$ which is arbitrary yields the expected estimate. 
\end{proof}

\begin{corollary}[Best-approximation property] \label{cor:bestapprox}
For all $m \geq 1$, the following estimate holds: 
\begin{equation} \label{eq:main_estimate_general}
\begin{aligned}
&\frac{C_{\mathrm{coerc}}}{8C_{\sfp}^2} \|\sfp(u_h^m) - u\|^2_{\DG} + \frac{(\mathcal{D}(u, o_h^m),1)}{\sum_{k=1}^m \alpha_k} \\
& \leq \frac{ ( \mathcal{D}(u, o_h^0) ,1) }{\sum_{k=1}^m \alpha_k} + \frac{1}{C_{\mathrm{coerc}}} \|\delta_h(u)\|^2_{V_h'} + e_K (u_h^m) + e_h(u) + \frac{C_{\mathrm{coerc}}}{4C_{\sfp}^2}\|u - J_h u\|^2_{\DG}. 
\end{aligned}
\end{equation}
\end{corollary}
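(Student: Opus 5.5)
The plan is to derive the corollary directly from \Cref{thm:bestapprox} with a triangle inequality in the DG norm, using the boundedness of $\sfp$ from \eqref{eq:bnd_sfp}. No new analysis is needed; only constants have to be tracked.

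First, we split the error through the intermediate quantity $J_h u = \sfp(\Pi_h u)$:
\[
\sfp(u_h^m) - u = \sfp(u_h^m - \Pi_h u) + (J_h u - u).
\]
Using $\|a+b\|^2 \le 2\|a\|^2 + 2\|b\|^2$, this gives
\[
\|\sfp(u_h^m) - u\|_{\DG}^2 \le 2\|\sfp(u_h^m-\Pi_h u)\|_{\DG}^2 + 2\|u - J_h u\|_{\DG}^2 .
\]
By \eqref{eq:bnd_sfp} and the linearity of $\sfp$, the first term satisfies
\[
\|\sfp(u_h^m-\Pi_h u)\|_{\DG}^2 \le C_{\sfp}^2 \|u_h^m - \Pi_h u\|_{V_h}^2 .
\]
Both $u$ and $J_h u$ lie in $H^1(\mathcal{T}_h)$, so the DG norm is well defined on each term. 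For $u\in H^1_0(\Omega)$, the boundary jumps of $u$ vanish and its interior jumps are zero, so nothing is lost.

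Next, we divide by $2C_{\sfp}^2$ and multiply by $C_{\mathrm{coerc}}/4$. This yields
\[
\frac{C_{\mathrm{coerc}}}{8C_{\sfp}^2}\|\sfp(u_h^m)-u\|_{\DG}^2 \le \frac{C_{\mathrm{coerc}}}{4}\|u_h^m-\Pi_h u\|_{V_h}^2 + \frac{C_{\mathrm{coerc}}}{4C_{\sfp}^2}\|u-J_h u\|_{\DG}^2 .
\]
We then add $(\mathcal{D}(u,o_h^m),1)/\sum_{k=1}^m\alpha_k$ to both sides and bound the first two terms on the right by the right-hand side of \eqref{eq:main_estimate_discrete}. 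This gives exactly \eqref{eq:main_estimate_general}.

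There is no real obstacle here. The only point needing care is that the DG norm of $u - J_h u$ is meaningful, which holds because $J_h$ maps into $V_{\mathcal{T}_h}\subset H^1(\mathcal{T}_h)$.
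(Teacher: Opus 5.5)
Your proposal is correct and matches the paper's proof. The paper also splits through $J_h u=\sfp(\Pi_h u)$, uses $\frac12\|\sfp(u_h^m)-u\|_{\DG}^2 \le \|u-J_hu\|_{\DG}^2+\|\sfp(u_h^m)-J_hu\|_{\DG}^2$ together with \eqref{eq:bnd_sfp}, and inserts the result into \eqref{eq:main_estimate_discrete}; your scaling by $C_{\mathrm{coerc}}/(8C_{\sfp}^2)$ gives exactly the stated constants.
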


\begin{proof}
The estimate follows from~\eqref{eq:main_estimate_discrete}, the bound 
\[ \frac12\|\sfp(u_h^m)-u\|_{\DG}^2 \le \|u-J_hu\|_{\DG}^2+\|\sfp(u_h^m)-J_hu\|_{\DG}^2,
\] 
and the boundedness of $\sfp$ (see~\eqref{eq:bnd_sfp}) which gives
$\|\sfp(u_h^m)-J_hu\|_{\DG}^2 \le C_{\sfp}^2\|u_h^m-\Pi_hu\|_{V_h}^2$.
\end{proof}

\begin{theorem}[Convergence of $\lambda_h^m$] \label{thm:multiplier_convergence}
For all $m \geq 1$, the following holds:
\begin{align} \label{eq:multiplier_err_bound}
\|\lambda_h^m - \lambda\|_{H^{-1}(\Omega)}  \leq {} & C_\Pi C_{\mathrm{cont}} \|u_h^m - \Pi_h u\|_{V_h} + C_\Pi \|\delta_h(u)\|_{V_h'} \\ \nonumber 
& + \sup_{v\in H^1_0(\Omega), v \neq 0} \frac{|a(u, v - \mathcal{C}_h J_h v) - (f, v - J_h v) |}{\|\nabla v \|_{L
^2(\Omega)}}. 
\end{align}
\end{theorem}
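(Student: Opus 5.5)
We work directly from the definition of the $H^{-1}$ norm: for $v\in H^1_0(\Omega)$ with $\|\nabla v\|_{L^2(\Omega)}=1$ we bound $\langle \lambda_h^m-\lambda, v\rangle$. The key idea is to insert the discrete test function $\Pi_h v\in V_h$, which is admissible in \eqref{eq:genPG1} and satisfies $\|\Pi_h v\|_{V_h}\le C_\Pi\|v\|_{\DG}=C_\Pi\|\nabla v\|_{L^2(\Omega)}$. The last identity holds because $v\in H^1_0(\Omega)$ has no jumps across interior facets and vanishing trace on boundary facets.

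First, since $\lambda_h^m\in W_h$, the orthogonality property \eqref{eq:Pihprop} gives $(\lambda_h^m,v)=(\lambda_h^m,J_h v)=(\lambda_h^m,\sfp(\Pi_h v))$. By \eqref{eq:genPG1} with $v_h=\Pi_h v$, this equals $a_h(u_h^m,\Pi_h v)-(f,J_h v)$. From \eqref{eq:conti-lambda}, $\langle\lambda,v\rangle=a(u,v)-(f,v)$. Subtracting, we obtain
\[
\langle \lambda_h^m-\lambda,v\rangle = a_h(u_h^m,\Pi_h v) - a(u,v) - (f,J_h v - v).
\]
Next we split $a_h(u_h^m,\Pi_h v)=a_h(u_h^m-\Pi_h u,\Pi_h v)+a_h(\Pi_h u,\Pi_h v)$. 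By \eqref{eq:duality} with $v_h=\Pi_h v$, the second piece equals $\langle\delta_h(u),\Pi_h v\rangle_{V_h',V_h}+a(u,\mathcal{C}_h J_h v)$. This gives the decomposition
\[
\langle \lambda_h^m-\lambda,v\rangle = a_h(u_h^m-\Pi_h u,\Pi_h v) + \langle\delta_h(u),\Pi_h v\rangle_{V_h',V_h} - \bigl[a(u,v-\mathcal{C}_h J_h v) - (f,v-J_h v)\bigr].
\]

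We then bound the three terms. By continuity \eqref{eq:continuity_a} and the bound on $\Pi_h$, the first term is at most $C_{\mathrm{cont}}C_\Pi\|u_h^m-\Pi_h u\|_{V_h}\|\nabla v\|_{L^2(\Omega)}$. By the definition of the dual norm, the second term is at most $C_\Pi\|\delta_h(u)\|_{V_h'}\|\nabla v\|_{L^2(\Omega)}$. The third term is controlled by the supremum appearing in \eqref{eq:multiplier_err_bound}. Dividing by $\|\nabla v\|_{L^2(\Omega)}$ and taking the supremum over $v\neq0$ yields the claim, using the norm $\|\nabla\cdot\|_{L^2(\Omega)}$ on $H^1_0(\Omega)$ to define the $H^{-1}$ norm.

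There is no real obstacle here. The only points needing care are the identity $\|v\|_{\DG}=\|\nabla v\|_{L^2(\Omega)}$ for $v\in H^1_0(\Omega)$, which is needed to apply \eqref{eq:Pihprop}, and the use of the orthogonality of $J_h$ against $W_h$ to move the discrete multiplier onto a discrete test function. The latter is the step that makes the nonconforming argument work.
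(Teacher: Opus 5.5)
Your proposal is correct and follows the paper's proof essentially step for step: the same test function $\Pi_h v$, the same use of the orthogonality of $J_h$ against $W_h$ to write $(\lambda_h^m,v)=(\lambda_h^m,J_h v)$, the same splitting through $\delta_h(u)$, and the same three bounds. The minus sign you put in front of the last bracket is the correct one; the paper's displayed identity has a harmless sign slip there, which the absolute value in the supremum absorbs.
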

\begin{proof}
Fix $v \in H^1_0(\Omega)$. Observe that by the first equation of the proximal DG iteration \eqref{eq:genPG1_bis} and the property~\eqref{eq:Pihprop} of $\Pi_h$, 
\begin{align*}
\langle \lambda_h^m, v\rangle = ( v, \lambda_h^m ) = (J_h v, \lambda_h^m) = a_h(u_h^m , \Pi_h v) - (f,J_h v).
\end{align*}
Furthermore, we use the definition~\eqref{eq:duality} of the consistency error $\delta_h(u)$ to write 
\begin{align*}
\langle \lambda, v \rangle 
& = a(u, v - \mathcal{C}_h J_h v)+ a(u,\mathcal{C}_h J_h v) - (f, v ) \\ 
& = a(u, v - \mathcal{C}_h J_h v) + a_h (\Pi_h u, \Pi_h v)  - \langle \delta_h(u) , \Pi_h v\rangle_{V_h' , V_h} - (f, v).
\end{align*}
Subtracting the above two identities gives 
\begin{align*}
\langle \lambda_h^m - \lambda, v\rangle 
= {}& a_h(u_h^m - \Pi_h u, \Pi_h v) +  \langle \delta_h(u) , \Pi_h v\rangle_{V_h' , V_h} \\ & + a(u, v - \mathcal{C}_hJ_h v) - (f, v - J_h v) \\  
\leq{}& ( C_{\mathrm{cont}} \|u_h^m - \Pi_h u\|_{V_h} + \|\delta_h(u)\|_{V_h'}) \|\Pi_h v\|_{V_h}  \\ & + a(u, v - \mathcal{C}_h J_h v) -(f, v - J_h v),
\end{align*}
where we used the boundedness of $a_h$ (see~\eqref{eq:continuity_a}).
To conclude, we invoke the stability of $\Pi_h$ from \eqref{eq:Pihprop} and the fact that 
$\|v\|_{\DG}=\|\nabla v\|_{L^2(\Omega)}$ since $v\in H^1_0(\Omega)$.
\end{proof} 

\begin{lemma}[Convergence of the bound-preserving approximation $o_h^m $] \label{lem:conv_ohm}
For all $m \geq 1$, the following holds:
\begin{align} \label{eq:bound_nonlinear_approx}
\|o_h^m - u \|_{L^2(\Omega)} \leq \| \sfp(u_h^m) - u \|_{L^2(\Omega)} + \|\Pi_h^0 u - u\| _{L^2(\Omega)},
\end{align}
where $\Pi_h^0$ is the $L^2$-projection onto $W_h$. (Recall that $W_h=\mathbb{P}^0(\mathcal{T}_h)$.)
\end{lemma}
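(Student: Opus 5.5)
The key observation is that \eqref{eq:genPG2_bis} says exactly that $\Pi_h^0 o_h^m = \Pi_h^0 \sfp(u_h^m)$, since $W_h=\mathbb{P}^0(\mathcal{T}_h)$. The plan is therefore to split the error as
\begin{equation*}
o_h^m - u = (o_h^m - \Pi_h^0 o_h^m) + (\Pi_h^0 \sfp(u_h^m) - \Pi_h^0 u) + (\Pi_h^0 u - u).
\end{equation*}
The second term is bounded by $\|\sfp(u_h^m)-u\|_{L^2(\Omega)}$ by $L^2$-stability of $\Pi_h^0$. The third term is already in the desired form. However, the first term is not obviously controlled by the right-hand side, so a naive triangle inequality does not close. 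I would instead exploit the structure of $o_h^m$.

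The crucial point is that $\psi_h^m\in W_h$ is piecewise constant, so on each $T$ we have $o_h^m|_T = \partial_z R^*(x,\psi_h^m|_T)$. This is constant on $T$ only if $R^*$ does not depend on $x$ beyond the shift by $\phi$. In the examples, $\nabla\mathcal{R}^*(\psi)=g(\psi)+\phi$, so $o_h^m-\phi$ is piecewise constant. This suggests working with $o_h^m$ directly and using a monotonicity argument rather than projection errors.

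\textbf{Main route (monotonicity):} write
\begin{equation*}
\|o_h^m-u\|^2 = (o_h^m-u,\, o_h^m-\Pi_h^0 u) + (o_h^m-u,\, \Pi_h^0u-u).
\end{equation*}
The second piece is at most $\|o_h^m-u\|\,\|\Pi_h^0u-u\|$. For the first piece, I would try to show it is at most $\|o_h^m-u\|\,\|\sfp(u_h^m)-u\|$. This is immediate when $o_h^m\in W_h$ (for instance, when $\phi$ is piecewise constant or $R^*$ is $x$-independent): in that case $o_h^m-\Pi_h^0u\in W_h$, so
\begin{equation*}
(o_h^m-u,\, o_h^m-\Pi_h^0u)=(\sfp(u_h^m)-u,\, o_h^m-\Pi_h^0u).
\end{equation*}
Then
\begin{equation*}
\|o_h^m-\Pi_h^0u\| = \|\Pi_h^0(\sfp(u_h^m)-u)\| \le \|\sfp(u_h^m)-u\|,
\end{equation*}
and dividing by $\|o_h^m-u\|$ gives the claim.

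\textbf{Simpler route when $o_h^m\in W_h$:} in that case $o_h^m=\Pi_h^0\sfp(u_h^m)$ outright. The triangle inequality then gives
\begin{equation*}
\|o_h^m-u\|\le\|\Pi_h^0(\sfp(u_h^m)-u)\|+\|\Pi_h^0u-u\|,
\end{equation*}
and the result follows from $L^2$-stability of $\Pi_h^0$. This is the route I would actually write, after checking that $o_h^m$ lies in $W_h$.

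\textbf{Main obstacle:} justifying $o_h^m\in W_h$, or equivalently $o_h^m=\Pi_h^0\sfp(u_h^m)$, when $\phi$ varies within cells. In that situation I expect the paper either assumes an $x$-independent $R$ (or approximates $\phi$ by piecewise constants) or accepts an extra $\phi-\Pi_h^0\phi$ term. I would state this assumption explicitly at that step.
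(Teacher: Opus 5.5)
Your ``simpler route'' is exactly the paper's proof. The paper reads $o_h^m=\Pi_h^0\sfp(u_h^m)$ off \eqref{eq:genPG2_bis} and then concludes with the triangle inequality and the $L^2$-stability of $\Pi_h^0$.

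Your reservation is well founded, and the paper does not address it. \eqref{eq:genPG2_bis} only gives $\Pi_h^0 o_h^m=\Pi_h^0\sfp(u_h^m)$. For the Shannon or softplus choices with $\phi$ varying inside cells, one has instead $o_h^m=\Pi_h^0\sfp(u_h^m)+(\phi-\Pi_h^0\phi)$.

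Without an extra assumption the stated inequality can in fact fail. Take an inactive constraint and an obstacle $\phi=-M+A\theta$ lying far below $u$, with $\theta$ smooth and nonconstant and $M$ large relative to $A$. As $m\to\infty$, the right-hand side tends to a limit that is independent of $A$, namely the one given by the unconstrained discrete solution. Meanwhile $\|o_h^m-u\|_{L^2(\Omega)}$ grows like $A\|\theta-\Pi_h^0\theta\|_{L^2(\Omega)}$.

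So a correct version must do one of two things. Either it carries the additional term $\|\phi-\Pi_h^0\phi\|_{L^2(\Omega)}\lesssim h|\phi|_{H^1(\Omega)}$, or it assumes that $\nabla\mathcal{R}^*$ maps $W_h$ into $W_h$, for instance by using $\Pi_h^0\phi$ in $R$. You flagged exactly this. The extra term does not change the squared-norm bounds on $o_h^m$ in \Cref{thm:general_error} and \Cref{thm:error_HHO_l=0}.
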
 
\begin{proof}
The second equation in the proximal DG iteration, see~\eqref{eq:genPG2_bis}, gives $o_h^m = \Pi_h^0 \sfp(u_h^m)$ . Hence, by the triangle inequality,
    \begin{align}
\|o_h^m - u \|_{L^2(\Omega)} \leq \|\Pi_h^0 (\sfp(u_h^m) - u) \|_{L^2(\Omega)} + \|\Pi_h^0 u - u\| _{L^2(\Omega)}, 
\end{align}
The $L^2$-stability of $\Pi_h^0$ leads to the expected bound.
\end{proof} 

\subsection{Error analysis with some additional regularity}

The error estimates derived so far do not require any additional regularity on the solution $u$. We now derive decay rates on the error under the assumption that $u \in H^2(\Omega) \cap H_0^1(\Omega)$ and $\phi \in H^2(\Omega)$ for simplicity. 
In what follows, for nonnegative real numbers $A$ and $B$, we use the abbreviated notation $A\lesssim B$ to denote the inequality $A\le CB$, where the value of $C$ can change at each occurrence provided it is independent of the mesh size $h$ and the proximity parameters $(\alpha_k)_{k\ge1}$. The value of $C$ can depend on the mesh shape-regularity, and on the higher-order norms of $u$ and $\phi$ according to the above assumptions.

Our analysis hinges on the following four assumptions. Assumptions~\ref{asm:Pih} and~\ref{asm:Ch} are standard assumptions on the stability and approximation properties of the operators $J_h:H^1(\mathcal{T}_h)\rightarrow V_{\mathcal{T}_h}$ and $\mathcal{C}_h:L^2(\Omega) \rightarrow \mathbb P^1(\mathcal{T}_h) \cap H^1_0(\Omega)$, respectively. Instead, Assumptions~\ref{asm:consistency} and~\ref{ass:Ch_K} are, respectively, related to the consistency error produced by the nonconforming discretization method and to how far the smoothed function $\mathcal{C}_h\sfp(u_h^m)$ lies from the feasible set $K$. Assumption~\ref{ass:Ch_K} is the key property that hinges on the constraint-awareness of the smoother $\mathcal{C}_h$. We will see how to realize this assumption in \Cref{sec:Ch}.
For now, we note that this requires an additional mild assumption (cf.~Lemma~\ref{lem:departure_K}).

\begin{assumption}[Stability and approximation for $J_h$]\label{asm:Pih}
The following estimates hold true:
\begin{subequations} \begin{alignat}{2}
&\|v - J_h v\|_{L^2(\Omega)} \lesssim h \|v\|_{\DG}, &\quad&\forall v \in H^1(\mathcal{T}_h), \label{eq:approx_H1_Jh} \\
&\|v - J_h v\|_{L^2(\Omega)} + h\|v - J_h v \|_{\DG} \lesssim h^2 |v|_{H^2(\Omega)}, &\quad&\forall v \in H^2(\Omega) \cap H_0^1(\Omega). \label{eq:approx_Jh}
\end{alignat}
\end{subequations}
\end{assumption}

\begin{assumption}[Stability and approximation for $\mathcal{C}_h$]\label{asm:Ch}
The following estimates hold true:
\begin{subequations} \begin{alignat}{2}
&\|\mathcal{C}_h v\|_{L^2(\Omega)} \lesssim \|v\|_{L^2(\Omega)}, &\quad&\forall v \in L^2(\Omega), \label{eq:stab_Ch} \\
&\|v - \mathcal{C}_h v\|_{L^2(\Omega)} \lesssim h \|v\|_{\DG}, &\quad&\forall v \in H^1(\mathcal{T}_h), \label{eq:approx_H1_Ch} \\
&\|v - \mathcal{C}_h v\|_{L^2(\Omega)} \lesssim h^2 |v|_{H^2(\Omega)}, &\quad&\forall v \in H^2(\Omega) \cap H_0^1(\Omega). \label{eq:approx_L2_Ch}
\end{alignat}
\end{subequations}
\end{assumption}

\begin{assumption}[Bound on consistency error]\label{asm:consistency}
The consistency error defined in~\eqref{eq:duality} satisfies
\begin{align}\label{eq:consist_gen}
\|\delta_h(u)\|_{V_h'} \lesssim h.
\end{align} 
\end{assumption}

\begin{assumption}[Departure from feasible set] \label{ass:Ch_K}
The following holds for all $m\geq 1$:
\begin{align} \label{eq:Ch_K}
\inf_{v\in K} \|\mathcal{C}_h\sfp(u_h^m) - v\|_{L^2(\Omega)} \lesssim h^2.
\end{align}
\end{assumption}

\begin{theorem}[Error estimates with additional regularity]\label{thm:general_error}
Under the above assumptions and assuming that the initialization of the proximal DG iterations satisfies $\|\psi_h^0\|_{L^\infty(\Omega)} \lesssim 1$, the following holds for all $m \geq 1$:
\begin{align}\label{eq:general_error}
\|\sfp(u_h^m) - u\|^2_{\DG} + \|\lambda_h^m - \lambda\|^2_{H^{-1}(\Omega)} + \|o_h^m-u\|_{L^2(\Omega)}^2  \lesssim \frac{1}{\sum_{k=1}^m \alpha_k} + h^2.
\end{align}
\end{theorem}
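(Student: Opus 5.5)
Our plan is to feed the assumptions into Corollary~\ref{cor:bestapprox}, Theorem~\ref{thm:multiplier_convergence} and Lemma~\ref{lem:conv_ohm}, bounding each term on the right-hand side by $(\sum_k\alpha_k)^{-1}$ or $h^2$.

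\textbf{Step 1: initialization term.} We first bound $(\mathcal{D}(u,o_h^0),1)\lesssim 1$. Since $\|\psi_h^0\|_{L^\infty}\lesssim 1$, the function $o_h^0=\nabla\mathcal{R}^*(\psi_h^0)$ satisfies $o_h^0-\phi\ge c>0$ with $\|o_h^0\|_{L^\infty}\lesssim 1$, and $\nabla\mathcal{R}(o_h^0)=\psi_h^0$ is bounded. We also have $u\in H^2\hookrightarrow L^\infty$ and $u\ge\phi$. With a mild growth assumption on $R$ (satisfied by Examples~\ref{example:shannon_entropy} and~\ref{example:softplus}, for which $R(x,\cdot)$ is bounded on bounded subsets of its domain), $\mathcal{R}(u)$ is bounded. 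Hence $\mathcal{D}(u,o_h^0)\lesssim 1$ pointwise and integrates to $\lesssim 1$.

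\textbf{Step 2: terms of Corollary~\ref{cor:bestapprox}.} We bound them as follows.
\begin{itemize}
\item Assumption~\ref{asm:consistency} gives $\|\delta_h(u)\|_{V_h'}^2\lesssim h^2$.
\item By~\eqref{eq:approx_Jh}, $\|u-J_hu\|_{\DG}^2\lesssim h^2$.
\item For $e_h(u)$, integrate by parts, using $u\in H^2\cap H^1_0$ and $\mathcal{C}_hJ_hu\in H^1_0$. This gives $e_h(u)=|(-\Delta u-f,\mathcal{C}_hJ_hu-u)+(f,\mathcal{C}_hJ_hu-J_hu)|$. Write $\mathcal{C}_hJ_hu-u=\mathcal{C}_h(J_hu-u)+(\mathcal{C}_hu-u)$ and use~\eqref{eq:stab_Ch}, \eqref{eq:approx_L2_Ch} and~\eqref{eq:approx_Jh} to get an $L^2$ bound of order $h^2$. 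Similarly, $\|\mathcal{C}_hJ_hu-J_hu\|_{L^2}\lesssim h^2$. Thus $e_h(u)\lesssim h^2$.
\item For the feasibility term, the same integration by parts gives
\[
e_K(u_h^m)\le \|{-\Delta u}-f\|_{L^2}\,\|\mathcal{C}_h\sfp(u_h^m)-v\|_{L^2}+\|f\|_{L^2}\,\|\mathcal{C}_h\sfp(u_h^m)-\sfp(u_h^m)\|_{L^2}.
\]
Taking the infimum in $v$ and invoking Assumption~\ref{ass:Ch_K}, the first part is $\lesssim h^2$. The second part is the delicate one. By~\eqref{eq:approx_H1_Ch}, it is $\lesssim h\|\sfp(u_h^m)\|_{\DG}$, which is only $O(h)$ if bounded crudely. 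Instead, insert $u$ and $J_hu$: $\sfp(u_h^m)-\mathcal{C}_h\sfp(u_h^m)=(I-\mathcal{C}_h)(\sfp(u_h^m)-u)+(u-\mathcal{C}_hu)$. This yields $\lesssim h\|\sfp(u_h^m)-u\|_{\DG}+h^2$. Young's inequality then absorbs $h\|\sfp(u_h^m)-u\|_{\DG}\le \varepsilon\|\sfp(u_h^m)-u\|_{\DG}^2+C_\varepsilon h^2$ into the left-hand side of~\eqref{eq:main_estimate_general}.
\end{itemize}
Together these give $\|\sfp(u_h^m)-u\|_{\DG}^2\lesssim(\sum\alpha_k)^{-1}+h^2$, and likewise for $\|u_h^m-\Pi_hu\|_{V_h}^2$ from Theorem~\ref{thm:bestapprox}, since $\|\sfp(u_h^m)-J_hu\|_{\DG}\le C_\sfp\|u_h^m-\Pi_hu\|_{V_h}$ allows the same absorption there.

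\textbf{Step 3: multiplier and $o_h^m$.} In Theorem~\ref{thm:multiplier_convergence}, the first two terms are now controlled. For the supremum term, integrate by parts again:
\[
a(u,v-\mathcal{C}_hJ_hv)-(f,v-J_hv)=(-\Delta u-f,\,v-\mathcal{C}_hJ_hv)+(f,\,\mathcal{C}_hJ_hv-J_hv).
\]
Both $L^2$ differences are $\lesssim h\|\nabla v\|$ by~\eqref{eq:approx_H1_Jh}, \eqref{eq:approx_H1_Ch} and~\eqref{eq:stab_Ch}, together with $\|J_hv\|_{\DG}\lesssim\|\nabla v\|$, which follows from~\eqref{eq:Pihprop} and~\eqref{eq:bnd_sfp}. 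Finally, Lemma~\ref{lem:conv_ohm} combined with a discrete Poincar\'e inequality $\|\cdot\|_{L^2}\lesssim\|\cdot\|_{\DG}$ and $\|\Pi_h^0u-u\|_{L^2}\lesssim h$ bounds $\|o_h^m-u\|^2$.

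\textbf{Main obstacle.} The main obstacle is Step~2's treatment of $e_K$. We need order $h^2$ rather than $h$, which relies on Assumption~\ref{ass:Ch_K} and on the absorption trick for the $(f,\mathcal{C}_h\sfp(u_h^m)-\sfp(u_h^m))$ term. Step~1's uniform bound on the initial Bregman divergence needs care only in justifying the boundedness of $\mathcal{R}(u)$.
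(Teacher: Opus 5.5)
Your proposal is correct and follows essentially the same route as the paper: you bound the five right-hand-side terms of Corollary~\ref{cor:bestapprox} (integration by parts for $e_K$ and $e_h$, Assumption~\ref{ass:Ch_K}, and a Young absorption of the $h\|\cdot\|_{\DG}$ contribution to $e_K$), then feed the resulting bound on $\|u_h^m-\Pi_hu\|_{V_h}$ into Theorem~\ref{thm:multiplier_convergence} and Lemma~\ref{lem:conv_ohm}. Your splitting $(I-\mathcal{C}_h)(\sfp(u_h^m)-u)+(u-\mathcal{C}_hu)$ is a slightly more direct version of the paper's, which inserts $J_hu$ and then needs a triangle inequality. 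Your remark that Step~1 requires $\mathcal{R}(u)$ to be uniformly bounded (true for the Shannon and softplus examples) makes explicit a point the paper treats as immediate.
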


\begin{proof}
We first bound $\|\sfp(u_h^m) - u\|^2_{\DG}$. To this purpose, we invoke Corollary~\ref{cor:bestapprox} observing that both terms on the left-hand side of~\eqref{eq:main_estimate_general} are nonnegative, so that it suffices to estimate the five terms on the right-hand side, say $T_1,\ldots,T_5$.
Our assumption on $\psi_h^0$ readily gives $(\mathcal{D}(u, o_h^0), 1) \lesssim 1$ so that
\[
T_1 \lesssim \frac{1}{\sum_{k=1}^m \alpha_k}.
\]
By \Cref{asm:consistency} on the consistency error, we have $T_2 \lesssim h^2$.
To bound $T_3 = e_K(u_h^m)$, we use that $\mathcal{C}_h\sfp(u_h^m)-v \in H^1_0(\Omega)$ and the
regularity of $u$ to integrate by parts. This gives, for all $v\in K$,
\begin{align*}
&{} |a(u,  \mathcal{C}_h \sfp(u_h^m) -v) -  (f, \sfp(u_h^m) - v)|\\ & \le{} 
|(\Delta u + f,v-\mathcal{C}_h \sfp(u_h^m))| + |(f,\mathcal{C}_h \sfp(u_h^m) -\sfp(u_h^m))| \\
& \lesssim \|v-\mathcal{C}_h \sfp(u_h^m)\|_{L^2(\Omega)} + \|\mathcal{C}_h \sfp(u_h^m) -\sfp(u_h^m)\|_{L^2(\Omega)}.
\end{align*}
Hence,
\[
T_3 \lesssim \inf_{v\in K} \|v-\mathcal{C}_h \sfp(u_h^m)\|_{L^2(\Omega)} 
+ \|\mathcal{C}_h \sfp(u_h^m) -\sfp(u_h^m)\|_{L^2(\Omega)}.
\]
Owing to Assumption~\ref{ass:Ch_K}, the first term on the right-hand side is bounded by $h^2$. Concerning the second term, invoking the triangle inequality, the approximation property~\eqref{eq:approx_H1_Ch} for $\mathcal{C}_h$, the $L^2$-stability of $\mathcal{C}_h$ from~\eqref{eq:stab_Ch}, and the approximation properties~\eqref{eq:approx_Jh} and~\eqref{eq:approx_L2_Ch} for $J_h$ and $\mathcal{C}_h$, respectively, we infer that
\begin{align*}
\|\mathcal{C}_h \sfp(u_h^m) -\sfp(u_h^m)\|_{L^2(\Omega)} \le{}& 
\|(I-\mathcal{C}_h)(\sfp(u_h^m)-J_hu)\|_{L^2(\Omega)} + \|\mathcal{C}_h(u-J_hu)\|_{L^2(\Omega)} \\
&+ \|u-\mathcal{C}_hu\|_{L^2(\Omega)} + \|u-J_hu\|_{L^2(\Omega)} \\
\lesssim {}& h\|\sfp(u_h^m)-J_hu\|_{\DG} + h^2.
\end{align*}
Invoking the triangle inequality $\|\sfp(u_h^m)-J_hu\|_{\DG} \le \|\sfp(u_h^m)-u\|_{\DG} + \|u-J_hu\|_{\DG}$, Young's inequality, and \eqref{eq:approx_Jh} to bound $\|u-J_hu\|_{\DG}$, we infer that
\[
\|\mathcal{C}_h \sfp(u_h^m) -\sfp(u_h^m)\|_{L^2(\Omega)} + \gamma 
\|\sfp(u_h^m)-u\|_{\DG}^2 \lesssim h^2,
\]
where $\gamma>0$ can be chosen as small as needed. Altogether, this gives
\[
T_3 + \gamma \|\sfp(u_h^m)-u\|_{\DG}^2 \lesssim h^2.
\]
Invoking similar arguments, we infer that
\begin{align*}
|a(u,\mathcal{C}_h J_h u-u) -  (f,J_h u-u)| \lesssim {}& \|\mathcal{C}_h J_h u-u\|_{L^2(\Omega)}
+ \|J_h u-u\|_{L^2(\Omega)}  \\
\le{}& \|\mathcal{C}_h (J_h u-u)\|_{L^2(\Omega)} + \|\mathcal{C}_hu-u\|_{L^2(\Omega)} \\ & 
+ \|J_h u-u\|_{L^2(\Omega)} \\
\lesssim{}& \|\mathcal{C}_hu-u\|_{L^2(\Omega)}
+ \|J_h u-u\|_{L^2(\Omega)} \lesssim h^2.
\end{align*}
Hence, $T_4\lesssim h^2$. Finally, $T_5\lesssim h^2$ readily follows from~\eqref{eq:approx_Jh}. Putting the above five bounds together and taking $\gamma>0$ small enough gives the expected bound on $\|\sfp(u_h^m) - u\|^2_{\DG}$.

To bound $\|\lambda_h^m - \lambda \|_{H^{-1}(\Omega)}^2$, we invoke \Cref{thm:multiplier_convergence} and estimate the three terms on the right-hand side of~\eqref{eq:multiplier_err_bound}, say $T'_1,T'_2,T'_3$. To estimate the first term, we use Theorem~\ref{thm:bestapprox}, observing that the four terms on the right-hand side of~\eqref{eq:main_estimate_discrete} have been estimated in the first step of the present proof. This gives $T'_1\lesssim h^2$. By \Cref{asm:consistency} on the consistency error, we have $T'_2 \lesssim h^2$. Finally, for $T'_3$, we fix $v \in H^1_0(\Omega)$ and observe that
\begin{align*}
a(u, v - \mathcal{C}_hJ_hv ) - (f, v - J_h v)) \lesssim{}& 
\|v - \mathcal{C}_hJ_hv  \|_{L^2(\Omega)}  +  \|v - J_h v\|_{L^2(\Omega)} \\
\lesssim{}& \|v-\mathcal{C}_hv\|_{L^2(\Omega)} + \|v - J_h v\|_{L^2(\Omega)} \lesssim h\|\nabla v\|_{L^2(\Omega)},
\end{align*}
where we used the triangle inequality, the $L^2$-stability of $\mathcal{C}_h$ from~\eqref{eq:stab_Ch}, the approximation properties~\eqref{eq:approx_H1_Ch} and~\eqref{eq:approx_H1_Jh} for $\mathcal{C}_h$ and $J_h$, respectively, and finally the fact that $\|v\|_{\DG} = \| \nabla v \|_{L^2(\Omega)}$ since $v \in H^1_0(\Omega)$. Combining the three above bounds leads to the expected bound on $\|\lambda_h^m - \lambda \|_{H^{-1}(\Omega)}^2$. 

Finally, the bound on $\|o_h^m-u\|_{L^2(\Omega)}^2$ follows from Lemma~\ref{lem:conv_ohm}, the broken Poincar\'e inequality (stating that $\|v\|_{L^2(\Omega)}\lesssim \|v\|_{\DG}$ for all $v\in H^1(\mathcal{T}_h)$, see \cite{brenner2003poincare}, and applied to $v:=\sfp(u_h^m)-u$), the above bound on $\|\sfp(u_h^m)-u\|_{\DG}$, and the fact that $\|\Pi_h^0u-u\|_{L^2(\Omega)}\lesssim h^2$.
\end{proof}

\subsection{The operator \texorpdfstring{$\mathcal{C}_h$}{Ch}} \label{sec:Ch}

As motivated above, the operator $\mathcal{C}_h:L^2(\Omega) \rightarrow \mathbb P^1(\mathcal{T}_h) \cap H^1_0(\Omega)$ plays the role of a smoother, but at the same time should control the departure of $u_h^m$ from the feasible set $K$, see \Cref{ass:Ch_K}.
To motivate the devising of $\mathcal{C}_h$, we identify a property satisfied by $u_h^m$ for all the proximal DG methods considered herein. This property hinges on the fact that we always have $W_h=\mathbb{P}^0(\mathcal{T}_h)$.

\begin{lemma}[Property of proximal iterates] \label[lemma]{lemma:Kh}
For all $m \geq 1$, we have
\begin{equation} \label{eq:discreteK}
\sfp(u_h^m) \in K_h :=\left \{ v \in L^2(\Omega)  \mid  \int_{T} (v - \phi) \mathrm{d}x \geq 0 \;\; \forall T \in \mathcal{T}_h \right \}. 
\end{equation} 
\end{lemma}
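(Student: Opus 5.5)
The argument rests on the second equation \eqref{eq:genPG2_bis} of the proximal DG iteration, combined with the fact that $W_h=\mathbb{P}^0(\mathcal{T}_h)$ contains the indicator function of every cell. Fix $m\geq 1$ and a cell $T\in\mathcal{T}_h$, and test \eqref{eq:genPG2_bis} at iteration $m$ with $w_h:=\chi_T\in W_h$, the characteristic function of $T$. This gives
\[
\int_T \sfp(u_h^m)\,\mathrm{d}x = \int_T \nabla\mathcal{R}^*(\psi_h^m)\,\mathrm{d}x = \int_T o_h^m\,\mathrm{d}x .
\]
Equivalently, as already noted in the proof of \Cref{lem:conv_ohm}, $o_h^m=\Pi_h^0\sfp(u_h^m)$ holds in the sense of cell averages.

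Next we use the pointwise bound-preservation of the latent map. By \eqref{eq:conj_grad_inverse} and the construction of $R$ (whose essential domain is $[\phi(x),\infty)$), we have $\nabla\mathcal{R}^*(\psi)(x)\in(\phi(x),\infty)$ for a.e.\ $x$ and every $\psi\in L^\infty(\Omega)$. Since $\psi_h^m\in W_h\subset L^\infty(\Omega)$, this yields $o_h^m>\phi$ a.e.\ in $\Omega$. Here $\phi\in C^0(\overline\Omega)$ is integrable on $T$, and $o_h^m$ is integrable on $T$ because $\psi_h^m$ is bounded and $\partial_zR^*(x,\cdot)$ is continuous; for instance, in \Cref{example:shannon_entropy,example:softplus} it is explicitly bounded. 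We may therefore integrate and obtain $\int_T(o_h^m-\phi)\,\mathrm{d}x\geq 0$, and in fact $>0$.

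Combining the two displays gives $\int_T(\sfp(u_h^m)-\phi)\,\mathrm{d}x=\int_T(o_h^m-\phi)\,\mathrm{d}x\geq 0$ for every $T$. Moreover $\sfp(u_h^m)\in V_{\mathcal{T}_h}\subset L^2(\Omega)$, so $\sfp(u_h^m)\in K_h$. This holds for every $m\geq1$, since $\psi_h^m$ exists by \Cref{thm:existence}; it also holds for $m=0$ whenever $u_h^0$ is defined consistently.

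The only point needing any care is the integrability of $\nabla\mathcal{R}^*(\psi_h^m)$ on each cell, which is required to split the integral of $o_h^m-\phi$. I would handle it by noting that $\psi_h^m$ is piecewise constant. On each $T$, the function $x\mapsto\partial_zR^*(x,c)$ with $c$ constant is measurable by the Carathéodory property. It is also integrable, because \eqref{eq:genPG2_bis} already presupposes that $(\nabla\mathcal{R}^*(\psi_h^m),w_h)$ is finite. Everything else is immediate.
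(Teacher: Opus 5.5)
Your proof is correct and follows the paper's own argument: test \eqref{eq:genPG2_bis} with the indicator function of a fixed cell $T$, which is admissible since $\mathbb{P}^0(\mathcal{T}_h)\subseteq W_h$, and then use $\nabla\mathcal{R}^*(\psi_h^m)>\phi$ a.e.\ in $T$. Your added remarks on the integrability of $o_h^m$ on each cell spell out a detail the paper leaves implicit.
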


\begin{proof}
The claim follows by testing~\eqref{eq:genPG2_bis} with the indicator function associated with one fixed element $T\in\mathcal{T}_h$ (this is possible since $\mathbb{P}^0(\mathcal{T}_h)\subseteq W_h$) and using that $\nabla \mathcal{R}^*(\psi_h^m) > \phi$ a.e. in $T$.
\end{proof}

We denote by $\mathcal{N}_h$ (resp.\ $\mathcal{N}_h^{\partial}$) the set of all interior (resp.\ boundary) vertices of $\mathcal{T}_h$.
Following \cite{Fuhrer+2024+363+378}, we consider the operator $\mathcal{C}_h: L^2(\Omega) \rightarrow \mathbb{P}^{1}(\mathcal{T}_h) \cap H_0^1(\Omega)$ defined as
\begin{equation} \label{eq:clement_case_1}
\mathcal{C}_h v := \sum_{z \in \mathcal{N}_h} v_z  \varphi_z,  \qquad v_z := \sum_{T \subset \omega_z } \frac{\alpha_{z,T}}{|T|}\int_{T} v \dd x, 
\end{equation}
where $(\varphi_z)_{z\in\mathcal{N}_h}$ are the nodal Lagrange basis functions of $\mathbb{P}^{1}(\mathcal{T}_h) \cap H_0^1(\Omega)$, and the weights $\alpha_{z,T}$ are chosen such that
\begin{equation}
z = \sum_{T \subset \omega_z} \alpha_{z,T} s_T, \quad  \sum_{T \subset \omega_z} \alpha_{z,T} = 1,  \quad \alpha_{z,T} \geq 0, \label{eq:convex_hull_z}
\end{equation}
where $s_T$ is the centroid of $T \in \mathcal{T}_h$ and $\omega_z$ is the union of elements sharing the node $z$. The choice of $\{\alpha_{z,T}\}$ is not unique, 
but is always possible since every interior node $z \in \mathcal{N}_h$ belongs to the convex hull of the set $\{ s_T \mid T \subset \omega_z\}$. 

\begin{lemma}[Stability and approximation for $\mathcal{C}_h$] \label[lemma]{lemma:modified_Clement}
Assumption~\ref{asm:Ch} holds true.
\end{lemma}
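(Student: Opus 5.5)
We verify the three estimates of Assumption~\ref{asm:Ch} in order. Everything rests on two elementary facts about the weights in \eqref{eq:convex_hull_z}. First, $\alpha_{z,T}\in[0,1]$ and $\sum_T\alpha_{z,T}=1$, so $v_z$ is a convex combination of cell means. Second, $\sum_T \alpha_{z,T}s_T = z$, so the operator $v\mapsto v_z$ reproduces affine functions exactly at interior nodes: for $q\in\mathbb P^1$, the mean of $q$ over $T$ is $q(s_T)$, hence $q_z=\sum_T\alpha_{z,T}q(s_T)=q(z)$.

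\textbf{Stability \eqref{eq:stab_Ch}.} The coefficient satisfies $|v_z|\le \max_{T\subset\omega_z}|T|^{-1/2}\|v\|_{L^2(T)}$. Moreover $\|\varphi_z\|_{L^2}\lesssim |\omega_z|^{1/2}$. By shape regularity, each element meets a uniformly bounded number of patches and neighboring cells have comparable measures. Summing $|v_z|^2\|\varphi_z\|^2_{L^2(T')}$ over the vertices of each $T'$ then gives $\|\mathcal C_h v\|_{L^2(T')}\lesssim \|v\|_{L^2(\Delta_{T'})}$, where $\Delta_{T'}$ is the second-layer patch around $T'$. Squaring and summing over $T'$ with finite overlap yields \eqref{eq:stab_Ch}.

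\textbf{Approximation \eqref{eq:approx_L2_Ch} for $v\in H^2\cap H^1_0$.} Fix $T'$. At an interior vertex $z$ of $T'$, affine reproduction gives $(v-\mathcal C_h v)$ locally equal to $(v-q)-\mathcal C_h(v-q)$ for any $q\in\mathbb P^1$. Choose $q$ to be a local (e.g.\ averaged Taylor) polynomial on $\Delta_{T'}$, and combine the local form of stability with Bramble--Hilbert: $\|v-q\|_{L^2(\Delta_{T'})}\lesssim h^2|v|_{H^2(\Delta_{T'})}$.

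\textbf{Main obstacle: boundary vertices.} Here $\mathcal C_h v$ has no nodal contribution, so affine reproduction fails. We handle elements $T'$ touching $\partial\Omega$ separately. Since $v\in H^1_0$, the nodal value of the ``ideal'' interpolant there is $0$. The missing term is $v_z^{\ast}\varphi_z$, where $v_z^\ast$ is defined with the same kind of weights, realized by cells adjacent to $z$ (on which $z$ lies in the convex hull of centroids after possibly using boundary-adjacent cells; otherwise replace this step by a simpler averaging). We bound $|v_z^\ast|$ through the local fit $q$: $|q(z)|\le |q(z)-v(z)|+0$. Alternatively, we use a trace/Poincar\'e argument on $\Delta_{T'}$ with $v=0$ on $\partial\Omega\cap\partial\Delta_{T'}$ to get $\|v\|_{L^2(\Delta_{T'})}\lesssim h^2|v|_{H^2}+h\|\nabla v\|$. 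The cleanest route is to compare with the Scott--Zhang interpolant $\mathcal I_h v$, which preserves the homogeneous boundary condition. On interior patches the difference $\mathcal C_h v - \mathcal I_h v$ vanishes on affine functions and is bounded by Bramble--Hilbert. At boundary nodes both have zero nodal value, so their difference reduces to interior-node contributions, each controlled by $\|v-q\|$ on the patch. This gives $\|\mathcal C_hv-\mathcal I_hv\|_{L^2}\lesssim h^2|v|_{H^2}$, and $\|v-\mathcal I_hv\|\lesssim h^2|v|_{H^2}$ is classical.

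\textbf{Approximation \eqref{eq:approx_H1_Ch} for broken $v\in H^1(\mathcal T_h)$.} We first pass to $\Pi_h^0 v$; since $\mathcal C_h v=\mathcal C_h\Pi_h^0v$ by definition, it suffices to bound $\|v-\Pi_h^0v\|\lesssim h\|\nabla_h v\|$ and $\|\Pi_h^0v-\mathcal C_h\Pi_h^0 v\|$. On each $T'$, the latter is controlled by differences of cell means between neighboring cells in $\Delta_{T'}$, because $\mathcal C_h$ reproduces constants at interior nodes. Each such difference is bounded by $h_F^{-1/2}\|[\Pi_h^0 v]\|_{L^2(F)}$ times $|T|^{1/2}h_F^{1/2}$-type scalings. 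For boundary nodes, we compare with $0$ through the boundary-face jump $h_F^{-1}\|v\|^2_{L^2(F)}$, which is included in the $\DG$ norm. Finally, $\|[\Pi_h^0v]\|_F\le\|[v]\|_F+$ trace terms $\lesssim h^{1/2}\|\nabla v\|$. Summing with finite overlap gives $\lesssim h\|v\|_{\DG}$. The boundary-vertex handling, in both approximation estimates, is the delicate point.
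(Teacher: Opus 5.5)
Your proposal is correct, but it takes a different, fully self-contained route. The paper does not reprove \eqref{eq:stab_Ch} and \eqref{eq:approx_L2_Ch}. It cites \cite[Theorem 11]{Fuhrer+2024+363+378} for them, and also for the conforming bound $\|w-\mathcal{C}_hw\|_{L^2(\Omega)}\lesssim h\|\nabla w\|_{L^2(\Omega)}$, $w\in H^1_0(\Omega)$. For \eqref{eq:approx_H1_Ch} it inserts the $L^2$-stable conforming smoother $\mathsf{S}_h=\mathsf{E}_h\circ\Pi_h^1$, splits $v-\mathcal{C}_hv=(v-\mathsf{S}_hv)-\mathcal{C}_h(v-\mathsf{S}_hv)+(\mathsf{S}_hv-\mathcal{C}_h\mathsf{S}_hv)$, and uses $\|v-\mathsf{S}_hv\|_{L^2(\Omega)}+h\|\nabla\mathsf{S}_hv\|_{L^2(\Omega)}\lesssim h\|v\|_{\DG}$.

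In the paper's approach the nonconformity is absorbed entirely by known Oswald-averaging estimates. The argument therefore works for any $L^2$-stable operator with first-order approximation on $H^1_0(\Omega)$.

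You instead exploit the specific structure $\mathcal{C}_h=\mathcal{C}_h\circ\Pi_h^0$, with weights summing to one. On each cell, $\Pi_h^0v-\mathcal{C}_h\Pi_h^0v$ then becomes a combination of differences of neighboring cell means (interior vertices) and of cell means themselves (boundary vertices). You control these by interior and boundary face jumps of $v$ plus trace/Poincar\'e terms. In effect you re-prove an Oswald-type estimate for $\mathcal{C}_h$ directly. This is more elementary and transparent, since it shows the bound is really a statement about jumps of $\Pi_h^0v$. The cost is that it is tied to the particular form of $\mathcal{C}_h$ and needs explicit bookkeeping near $\partial\Omega$.

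Your Scott--Zhang comparison for \eqref{eq:approx_L2_Ch} is also sound. Both operators have zero nodal values at boundary vertices when $v\in H^1_0(\Omega)$. At interior vertices both nodal functionals reproduce affine functions, so each nodal difference is $\lesssim h^{2-n/2}|v|_{H^2(\omega_z)}$.

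A few points to tidy:
\begin{itemize}
\item Drop the tentative alternatives that precede the Scott--Zhang argument. The weights defining $v_z^\ast$ need not exist: a boundary vertex on a flat or convex part of $\partial\Omega$ never lies in the convex hull of the centroids of its patch.
\item In the broken-$H^1$ step, a cell $T'$ may touch $\partial\Omega$ only at a vertex $z$. Reaching a boundary face then requires a chain of face-neighbors inside $\omega_z$, of length bounded by shape regularity; state this explicitly.
\item The factor multiplying $h_F^{-1/2}\|[\Pi_h^0v]\|_{L^2(F)}$ is $|T'|^{1/2}|F|^{-1/2}h_F^{1/2}\sim h$, not $|T|^{1/2}h_F^{1/2}$.
\end{itemize}
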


\begin{proof}
The properties~\eqref{eq:stab_Ch} and~\eqref{eq:approx_L2_Ch} are already established in \cite[Theorem 11]{Fuhrer+2024+363+378}. 
It remains to prove~\eqref{eq:approx_H1_Ch}. Let $v \in H^1(\mathcal{T}_h)$ and let us prove that $\|v - \mathcal{C}_h v\|_{L^2(\Omega)} \lesssim h \|v\|_{\DG}$. 
We consider an $L^2$-stable interpolation operator considered in~\cite{ern2017finite}. Specifically, let $\mathsf{S}_h:H^1(\mathcal{T}_h)\rightarrow \mathbb P^1(\mathcal{T}_h) \cap H^1_0(\Omega)$ be defined as $\mathsf{S}_h:=\mathsf{E}_h \circ \Pi_h^1$, where $\Pi_h^1$ is the $L^2$-orthogonal projection onto $\mathbb{P}^1(\mathcal{T}_h)$ and $\mathsf{E}_h: \mathbb{P}^1(\mathcal{T}_h) \rightarrow \mathbb{P}^1(\mathcal{T}_h) \cap H_0^1(\Omega)$ is the averaging/Oswald enriching map originally considered, among others, in \cite{Brenner:93,Oswald:93}. It is readily shown that, for all $v\in H^1(\mathcal{T}_h)$,
\[
\|v - \mathsf{S}_h v\|_{L^2(\Omega)} + h \| \nabla \mathsf{S}_h v \|_{L^2(\Omega)}  \lesssim h \|v\|_{\DG}.  
\]
We then use the triangle inequality and the $L^2$-stability of $\mathcal{C}_h$ to infer that
\begin{align*}
\|v - \mathcal{C}_h v \|_{L^2(\Omega)} & \leq \|v - \mathsf{S}_h v\|_{L^2(\Omega)} + \|\mathcal{C}_h(v - \mathsf{S}_h v)\|_{L^2(\Omega)}+ \|\mathsf{S}_h v - \mathcal{C}_h \mathsf{S}_h v\|_{L^2(\Omega)}  \\  
& \lesssim \|v - \mathsf{S}_h v\|_{L^2(\Omega)} + h \| \nabla \mathsf{S}_h v \|_{L^2(\Omega)}
\lesssim h \|v\|_{\DG},
\end{align*}
where we also used the approximation estimate $\|w-\mathcal{C}_hw\|_{L^2(\Omega)} \lesssim h\|\nabla w\|_{L^2(\Omega)}$ for all $w\in H^1_0(\Omega)$, which is already established in~\cite[Theorem 11]{Fuhrer+2024+363+378}. This completes the proof. 
\end{proof}

We are now ready to examine the departure from the feasible set of the proximal primal iterates $u_h^m$ using the operator $\mathcal{C}_h$. For this purpose, we make a mild assumption on the obstacle function $\phi$. Recall that we are already assuming that $\phi\in H^2(\Omega)$ and that $\phi|_{\partial\Omega}\leq0$.

\begin{lemma}[Departure from feasible set] \label{lem:departure_K}
Assume that $\phi \vert_{\partial \Omega} \in \mathbb{P}^1(\mathcal{F}^\partial_h)$. Then
Assumption~\ref{ass:Ch_K} holds true.
\end{lemma}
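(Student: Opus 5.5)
We need to show that $\inf_{v\in K}\|\mathcal{C}_h\sfp(u_h^m)-v\|_{L^2(\Omega)}\lesssim h^2$. The natural candidate is $v := \max(\mathcal{C}_h\sfp(u_h^m),\phi)$, which lies in $H^1_0(\Omega)$ because $\phi\le 0$ on $\partial\Omega$ and $\mathcal{C}_h\sfp(u_h^m)\in H^1_0(\Omega)$. With this choice, $\|\mathcal{C}_h\sfp(u_h^m)-v\|_{L^2(\Omega)}=\|(\phi-\mathcal{C}_h\sfp(u_h^m))_+\|_{L^2(\Omega)}$, so it suffices to bound the negative part of $\mathcal{C}_h\sfp(u_h^m)-\phi$ by $h^2$ in $L^2$.

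Set $w:=\sfp(u_h^m)$. By \Cref{lemma:Kh}, $\frac{1}{|T|}\int_T w\,\mathrm{d}x\ge \frac{1}{|T|}\int_T\phi\,\mathrm{d}x$ for every $T$. Since the weights in \eqref{eq:clement_case_1} are nonnegative, for every interior node $z$ we get
\[
w_z\ \ge\ \phi_z:=\sum_{T\subset\omega_z}\frac{\alpha_{z,T}}{|T|}\int_T\phi\,\mathrm{d}x .
\]
Hence $\mathcal{C}_h w\ge \mathcal{C}_h\phi$ pointwise, because $\varphi_z\ge0$. At boundary nodes both coefficients vanish. Therefore $(\phi-\mathcal{C}_h w)_+\le(\phi-\mathcal{C}_h\phi)_+\le|\phi-\mathcal{C}_h\phi|$, and it remains to show that $\|\phi-\mathcal{C}_h\phi\|_{L^2(\Omega)}\lesssim h^2$.

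The estimate \eqref{eq:approx_L2_Ch} only covers functions in $H^2(\Omega)\cap H^1_0(\Omega)$, and $\phi$ does not vanish on $\partial\Omega$. This is where the hypothesis $\phi|_{\partial\Omega}\in\mathbb{P}^1(\mathcal{F}_h^\partial)$ enters. Let $g\in\mathbb{P}^1(\mathcal{T}_h)\cap H^1(\Omega)$ be the function whose nodal values equal $\phi(z)$ at boundary nodes and $0$ at interior nodes; then $g|_{\partial\Omega}=\phi|_{\partial\Omega}$ and $g\le0$. Since $\mathcal{C}_h$ discards boundary nodes, I would instead work with a modified comparison $\widetilde{\mathcal{C}}_h\phi:=\mathcal{C}_h\phi+g$, a continuous $\mathbb{P}^1$ function carrying the correct boundary trace. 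The key local ingredient is that, by the convex-combination property \eqref{eq:convex_hull_z}, the interior nodal value $\phi_z$ reproduces affine functions exactly: for $\phi$ affine on $\omega_z$, $\phi_z=\sum_T\alpha_{z,T}\phi(s_T)=\phi(z)$. A Bramble--Hilbert argument on $\omega_z$ then gives $|\phi_z-\phi(z)|\lesssim h^{2-n/2}|\phi|_{H^2(\omega_z)}$, and the nodal interpolant $I_h\phi$ satisfies $\|\phi-I_h\phi\|_{L^2}\lesssim h^2|\phi|_{H^2}$ (valid for $n\le3$). Together these yield $\|\phi-\widetilde{\mathcal{C}}_h\phi\|_{L^2(\Omega)}\lesssim h^2|\phi|_{H^2(\Omega)}$.

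The main obstacle is reconciling the boundary layer: $\mathcal{C}_h w$ vanishes at boundary nodes while $\phi$ may be negative there. I would handle it by comparing $\mathcal{C}_h w$ with $\mathcal{C}_h\phi+g$ only after observing that $g\le0$, so that $\mathcal{C}_h w\ge\mathcal{C}_h\phi\ge\mathcal{C}_h\phi+g$. Consequently $(\phi-\mathcal{C}_h w)_+\le|\phi-(\mathcal{C}_h\phi+g)|$, whose $L^2$ norm is $\lesssim h^2$ by the previous paragraph. The $\mathbb{P}^1$ boundary assumption is exactly what makes the trace of $\mathcal{C}_h\phi+g$ match $\phi$ on $\partial\Omega$, so that no $O(h^{3/2})$ boundary-layer error appears.
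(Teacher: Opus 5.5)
Your proof is correct, but it uses a different feasible competitor from the paper's.

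\textbf{The paper's route.} The paper takes the shifted function $v(u_h^m)=\mathcal{C}_h\sfp(u_h^m)-\tilde{\mathcal{C}}_h\phi+\phi$. Under the hypothesis, $\tilde{\mathcal{C}}_h\phi$ is exactly your $\mathcal{C}_h\phi+g$. The paper checks that $\mathcal{C}_h\sfp(u_h^m)-\tilde{\mathcal{C}}_h\phi\ge0$ at every vertex (Lemma~\ref{lemma:Kh} at interior vertices, $\phi\le0$ at boundary vertices), so $v(u_h^m)\ge\phi$. The distance is then exactly $\|\phi-\tilde{\mathcal{C}}_h\phi\|_{L^2(\Omega)}$, and its $O(h^2)$ bound is quoted from the conforming analysis rather than proved.

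\textbf{Your route.} You truncate instead, $v=\max(\mathcal{C}_h\sfp(u_h^m),\phi)$. You turn the same nodal inequalities into the pointwise bound $(\phi-\mathcal{C}_h\sfp(u_h^m))_+\le|\phi-(\mathcal{C}_h\phi+g)|$. You also supply the $O(h^2)$ estimate yourself, via affine exactness of $\phi_z$, Bramble--Hilbert on interior patches, and comparison with $I_h\phi$.

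\textbf{A correction to your last paragraph.} In the paper's route the hypothesis $\phi|_{\partial\Omega}\in\mathbb{P}^1(\mathcal{F}^\partial_h)$ is genuinely needed. The trace of $v(u_h^m)$ is $(\phi-\tilde{\mathcal{C}}_h\phi)|_{\partial\Omega}$, and it must vanish. In your route the hypothesis is never used:
\begin{itemize}
\item The truncation lies in $H^1_0(\Omega)$ simply because $\phi\le0$ on $\partial\Omega$.
\item The difference $\mathcal{C}_h\phi+g-I_h\phi=\sum_{z\in\mathcal{N}_h}(\phi_z-\phi(z))\varphi_z$ involves only interior vertices. Hence $\|\phi-(\mathcal{C}_h\phi+g)\|_{L^2(\Omega)}\lesssim h^2|\phi|_{H^2(\Omega)}$ holds for every $\phi\in H^2(\Omega)$, whether or not $g|_{\partial\Omega}=\phi|_{\partial\Omega}$.
\end{itemize}
There is no boundary layer to suppress in this $L^2(\Omega)$ estimate. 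So your account of where the hypothesis ``enters'' is off. This is harmless, since the hypothesis is granted, and in fact your truncation argument establishes Assumption~\ref{ass:Ch_K} without it.

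\textbf{Trade-off.} Your truncation removes the boundary hypothesis. The paper's shift, in exchange, gives the distance as an exact identity and needs no lattice stability property of $H^1_0(\Omega)$.
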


\begin{proof} 
Following \cite{keith2025priori}, 
let us set
\[
v(u_h^m) := \mathcal{C}_h \sfp(u_h^m) -\tilde{\mathcal{C}}_h \phi + \phi,
\]
where the modified interpolant $\tilde{\mathcal{C}}_h: H^1(\Omega) \rightarrow \mathbb P^1(\mathcal{T}_h) \cap H^1(\Omega)$ is a modification of the constraint-aware smoother $\mathcal{C}_h$ that accounts for non-homogeneous boundary conditions. Namely, for every interior vertex $z \in \mathcal{N}_h$, we set $\tilde{\mathcal{C}}_h v(z) = \mathcal{C}_h v(z)$, and for every boundary vertex $z \in  \mathcal{N}_h^\partial$, we set $\tilde{\mathcal{C}}_h v(z)  = (\mathcal{SZ}_h v )(z)$, where $\mathcal {SZ}_h: H^1(\Omega) \rightarrow \mathbb{P}^1(\mathcal{T}_h) \cap H^1(\Omega)$ is the Scott--Zhang interpolant defined in \cite{scott1990finite}. It is shown in \cite[Lemma 4.2]{keith2025priori} that $\|\phi-\tilde{\mathcal{C}}_h \phi\|_{L^2(\Omega)} \lesssim h^2$.

Let us now verify that $v(u_h^m)\in K$. For every boundary vertex $z \in \mathcal{N}_h^\partial$, we observe that $\mathcal{C}_h \sfp(u_h^m)(z)=0$ by definition of $\mathcal{C}_h$ and $\tilde{\mathcal{C}}_h \phi(z) = \phi(z)$ since $\phi \vert_{\partial \Omega} \in \mathbb{P}^1(\mathcal{F}^\partial_h)$. Thus, $v(u_h^m)(z)=0$ for all $z \in \mathcal{N}_h^\partial$, and since $v(u_h^m)\vert_{\partial \Omega} \in \mathbb{P}^1(\mathcal{F}^\partial_h)$, we conclude that $v(u_h^m)\in H^1_0(\Omega)$. Moreover, we have $(\mathcal{C}_{h} \sfp(u_h^m) - \tilde{\mathcal{C}}_h \phi) (z) \ge 0$ for every boundary vertex $z\in\mathcal{N}_h^\partial$. 
Consider next an interior vertex $z \in \mathcal{N}_h$. Owing to Lemma~\ref{lemma:Kh} and the definition of $\mathcal{C}_h$, we readily see that 
\begin{align*}
(\mathcal{C}_{h} \sfp(u_h^m) - \tilde{\mathcal{C}}_h \phi) (z)  = \sum_{T \subset \omega_z } \frac{\alpha_{z,T}}{|T|}\int_{T} (\sfp(u_h^m) - \phi) \dd x \geq 0, \qquad \forall z\in\mathcal{N}_h. 
\end{align*}
Altogether, we see that $(\mathcal{C}_{h} \sfp(u_h^m) - \tilde{\mathcal{C}}_h \phi) (z) \ge 0$ for every $z\in\mathcal{N}_h \cup \mathcal{N}_h^\partial$. 
Therefore, the continuous piecewise affine function $\mathcal{C}_h \sfp(u_h^m) -\tilde{\mathcal{C}}_h \phi$ takes nonnegative values everywhere in $\Omega$. We conclude that $v(u_h^m)\ge \phi$ in $\Omega$, i.e., $v(u_h^m)\in K$. 

We are now ready to establish the estimate~\eqref{eq:Ch_K} in Assumption~\ref{ass:Ch_K}.
We observe that
\[
\inf_{v\in K} \|\mathcal{C}_h\sfp(u_h^m) - v\|_{L^2(\Omega)} \le \|\mathcal{C}_h\sfp(u_h^m) - v(u_h^m)\|_{L^2(\Omega)} = \|\phi-\tilde{\mathcal{C}}_h \phi\|_{L^2(\Omega)} \lesssim h^2.
\]
This concludes the proof.
\end{proof}

\section{Specific proximal DG methods}
\label{sec:specific_methods}
We now apply the general framework of \Cref{sec:error_analysis} for each of the proximal DG methods introduced in \Cref{sec:dG_intro,sec:hybrid_intro}. For each method, 
we first specify the operators $\sfp$ and $\Pi_h$, and the discrete bilinear form $a_h$.
Then we verify that Assumption~\ref{ass:basic} (basic setting), Assumption~\ref{asm:Pih} (stability and approximation for $J_h$), and Assumption~\ref{asm:consistency} (bound on consistency error) hold true. Recall that Assumptions~\ref{asm:Ch} and~\ref{ass:Ch_K} were already established in \Cref{sec:Ch}. At the end of the section, when dealing with the proximal HHO method, we introduce a different approach to bound the error inspired from \cite{Cicuttin2020} and which can be invoked whenever $V_{\mathcal{T}_h}=\mathbb{P}^0(\mathcal{T}_h)$. This turns out to be the case only for some of the HHO methods, but not for the other nonconforming discretization methods.

\subsection{Proximal IPDG}\label{sec:IPDG}
Recall that the discrete spaces are $V_h := \mathbb{P}^1(\mathcal{T}_h)$ and $W_h := \mathbb{P}^0(\mathcal{T}_h)$ (see \Cref{sec:dG_intro}). We equip the space $V_h$ with the $\|{\cdot}\|_{\DG}$-norm. 
We employ the symmetric IPDG bilinear form $a_h$ defined for all $z_h, v_h \in V_h$ as
\begin{align}
\label{eq:ipdg_form}
a_h(z_h,v_h) = & \sum_{T \in \mathcal{T}_h}(\nabla z_h, \nabla v_h)_T
-\sum_{F\in\mathcal{F}_h \cup \mathcal{F}_h^\partial } ( \{\nabla z_h\}\cdot \bfn_F, [v_h] )_{F}\\
& -\sum_{F\in\mathcal{F}_h \cup \mathcal{F}_h^\partial } ( \{\nabla v_h\}\cdot \bfn_F, [z_h] )_{F}
+\sum_{F\in\mathcal{F}_h \cup \mathcal{F}_h^\partial }  \frac{\sigma}{h_F} ( [z_h], [v_h] )_F, \nonumber
\end{align}
where $\sigma>0$ is a user-specified penalty parameter, assumed to be large enough so that $a_h$ is coercive with respect to the $\|{\cdot}\|_{\DG}$-norm, see \cite{riviere2008discontinuous} or \cite[Section 4.1]{di2011mathematical}. In particular, with standard arguments, one can show that \eqref{eq:coercivity_a} and \eqref{eq:continuity_a} hold true. 
Here, $V_{\mathcal{T}_h} = V_h$, $\sfp$ is the identity map, and $\Pi_h$ is the $L^2$-orthogonal projection onto $V_h$. Hence, \eqref{eq:bnd_sfp} trivially holds true, and \eqref{eq:Pihprop} is a classical property of the $L^2$-orthogonal projection onto $V_h$. Altogether, this shows that \Cref{ass:basic} is satisfied. Moreover, \Cref{asm:Pih} also holds true since $J_h:=\sfp\circ \Pi_h$ is again the $L^2$-orthogonal projection onto $V_h$. It remains to verify \Cref{asm:consistency}.

\begin{lemma}[Bound on consistency error]\label[lemma]{lemma:consistency_IPDG}
Assumption~\ref{asm:consistency} holds true.
\end{lemma}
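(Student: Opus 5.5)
We must show that $|a_h(\Pi_h u,v_h) - a(u,\mathcal{C}_h v_h)| \lesssim h\|v_h\|_{\DG}$ for all $v_h\in V_h=\mathbb{P}^1(\mathcal{T}_h)$, with $\sfp$ the identity and $u\in H^2(\Omega)\cap H^1_0(\Omega)$. The plan is to insert $u$ itself into the discrete form, which is legitimate since $a_h$ extends to $H^2(\Omega)+V_h$. We split
\[
\langle\delta_h(u),v_h\rangle = a_h(\Pi_h u - u, v_h) + \big(a_h(u,v_h) - a(u,\mathcal{C}_h v_h)\big).
\]

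For the first term, we use the boundedness of the IPDG form on the extended space. This bound is with respect to the norm $\|w\|_{\DG,*}^2 := \|w\|_{\DG}^2 + \sum_F h_F\|\{\nabla w\}\cdot\bfn_F\|_{L^2(F)}^2$. Applying it with $w=\Pi_h u-u$, we invoke the standard approximation properties of the $L^2$-projection onto $\mathbb{P}^1(\mathcal{T}_h)$ together with local trace inequalities. This gives $\|\Pi_h u-u\|_{\DG,*}\lesssim h|u|_{H^2(\Omega)}$. The pairing with $v_h$ is controlled by $\|v_h\|_{\DG}$, using discrete trace inverse inequalities for the $h_F^{1/2}\{\nabla v_h\}$ part. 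Hence the first term is $\lesssim h\|v_h\|_{\DG}$.

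For the second term, we use the strong consistency of the symmetric IPDG form. Since $[u]=0$ on all facets and $u\in H^2$, elementwise integration by parts gives $a_h(u,v_h) = (-\Delta u, v_h)$. Indeed, the consistency term $-\sum_F(\{\nabla u\}\cdot\bfn_F,[v_h])_F$ cancels exactly the boundary terms produced by integration by parts, while the symmetry and penalty terms vanish. Similarly $a(u,\mathcal{C}_h v_h) = (-\Delta u,\mathcal{C}_h v_h)$, because $\mathcal{C}_h v_h\in H^1_0(\Omega)$. Therefore
\[
a_h(u,v_h) - a(u,\mathcal{C}_h v_h) = (-\Delta u, v_h - \mathcal{C}_h v_h) \le \|\Delta u\|_{L^2(\Omega)}\,\|v_h-\mathcal{C}_h v_h\|_{L^2(\Omega)} \lesssim h\|v_h\|_{\DG},
\]
by the approximation property \eqref{eq:approx_H1_Ch} established in Lemma~\ref{lemma:modified_Clement}.

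Combining the two bounds and taking the supremum over $v_h$ with $\|v_h\|_{V_h}=\|v_h\|_{\DG}=1$ gives $\|\delta_h(u)\|_{V_h'}\lesssim h$. The main technical point is the first term, since $a_h$ is only coercive and bounded on $V_h$ in the $\DG$-norm. The remedy is the standard bound $a_h(w,v_h)\lesssim \|w\|_{\DG,*}\|v_h\|_{\DG}$ for $w\in H^2(\mathcal{T}_h)+V_h$, as in \cite[Section 4.2]{di2011mathematical}. For the face term with $[w]$ paired against $\{\nabla v_h\}$, we use $h_F^{1/2}\|\nabla v_h\|_{L^2(F)}\lesssim\|\nabla v_h\|_{L^2(T)}$.
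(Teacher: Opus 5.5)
Your proof is correct and follows essentially the paper's argument. By strong consistency, your first term $a_h(\Pi_h u-u,v_h)$ is exactly the paper's $\delta_1 = a_h(\Pi_h u,v_h)+(\Delta u,v_h)$, and the paper's bound on it is precisely your $\|\Pi_h u-u\|_{\DG,*}$ estimate. Your second term is $-\delta_2 = -(\Delta u, v_h-\mathcal{C}_h v_h)$, which you control via \eqref{eq:approx_H1_Ch} just as the paper does; you simply spell out the strong-consistency and extended-boundedness steps that the paper cites as well known.
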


\begin{proof}
For all $v_h \in V_h$, we can write (since $\mathcal{C}_h v_h \in H_0^1(\Omega)$)
$\langle  \delta_h(u), v_h \rangle_{V_h', V_h} = \delta_1-\delta_2$ with
\begin{align*} 
\delta_1 &:= a_h(\Pi_hu,v_h) + (\Delta u,v_h), \\
\delta_2 &:= (\Delta u,v_h-\mathcal{C}_hv_h).
\end{align*}
It is well-known, see, e.g., \cite[Chapter 2]{riviere2008discontinuous} or \cite[Section 4.1]{di2011mathematical}, that
\begin{align*}
|\delta_1| \lesssim{}& \Bigg( \|\nabla(\Pi_hu-u)\|_{L^2(\Omega)}^2 + \sum_{F \in \mathcal{F}_h \cup \mathcal{F}_h^\partial} h_F \|\{ \nabla (\Pi_h u - u)\} \cdot \bm n_F \|_{L^2(F)}^2 \\
& + \sum_{F \in \mathcal{F}_h \cup \mathcal{F}_h^\partial}  h_F^{-1} \| [\Pi_hu] \|_{L^2(F)}^2 \Bigg)^{\frac12} \|v_h\|_{\DG}  \lesssim h \|v_h\|_{\DG}, 
\end{align*} 
where we used the approximation properties of $\Pi_h$.
In addition, invoking~\eqref{eq:approx_H1_Ch} gives $|\delta_2| \lesssim h \|v_h\|_{\DG}$. 
Combining the above two estimates completes the proof. 
\end{proof} 

Everything is now in place to infer the error estimate for the proximal IPDG method from \Cref{thm:general_error}.

\begin{corollary}[Error estimate for proximal IPDG]\label{cor:errorDG}
Assume that $u \in H^2(\Omega) \cap H^1_0(\Omega)$ and $\phi \in H^2(\Omega)$ with $\phi \vert_{\partial \Omega} \in \mathbb{P}^1(\mathcal{F}^\partial_h)$. Then, the proximal IPDG iterates generated  by \Cref{alg:main_alg_discrete} satisfy the following error estimate: For all $m \geq 1$, 
\begin{align}
   \|u_h^m - u\|^2_{\DG}  + \|\lambda_h^m - \lambda \|_{H^{-1}(\Omega)}^2 + \|o_h^m - u\|_{L^2(\Omega)}^2 & \lesssim \frac{1}{\sum_{k=1}^m \alpha_k} + h^2. 
\end{align}
\end{corollary}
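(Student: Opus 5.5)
The corollary follows from \Cref{thm:general_error}, so the plan is to check that every hypothesis of that theorem holds for the proximal IPDG setting. Then $\sfp$ is the identity, so $\sfp(u_h^m)=u_h^m$ and the left-hand side of \eqref{eq:general_error} is exactly the quantity in the corollary.

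\textbf{Step 1: the structural assumptions.} As argued just before \Cref{lemma:consistency_IPDG}, \Cref{ass:basic} holds with $\|\cdot\|_{V_h}=\|\cdot\|_{\DG}$. This uses coercivity and continuity of $a_h$ for $\sigma$ large enough, $C_\sfp=1$, and $\Pi_h$ equal to the $L^2$-orthogonal projection onto $\mathbb{P}^1(\mathcal{T}_h)$. For the stability $\|\Pi_h v\|_{\DG}\lesssim\|v\|_{\DG}$ on $H^1(\mathcal{T}_h)$, I would invoke the standard local $H^1$ stability of the elementwise $L^2$ projection together with a trace inequality to control the jumps.

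\Cref{asm:Pih} follows from standard approximation properties of the elementwise $L^2$ projection onto $\mathbb{P}^1$:
\begin{itemize}
\item first-order $L^2$ approximation via a local Poincaré inequality, using that $\mathbb{P}^0\subset\mathbb{P}^1$;
\item second-order $L^2$ and first-order broken-gradient approximation for $H^2$ functions, with jump terms handled by the trace inequality and the fact that $[u]=0$ on every facet for $u\in H^1_0(\Omega)$.
\end{itemize}

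\textbf{Step 2: the smoother and consistency assumptions.} \Cref{asm:Ch} is \Cref{lemma:modified_Clement}, and \Cref{asm:consistency} is \Cref{lemma:consistency_IPDG}. \Cref{ass:Ch_K} is \Cref{lem:departure_K}, which is exactly where the hypothesis $\phi|_{\partial\Omega}\in\mathbb{P}^1(\mathcal{F}_h^\partial)$ enters. That lemma rests on \Cref{lemma:Kh}, which uses $W_h=\mathbb{P}^0(\mathcal{T}_h)$, and this choice is the IPDG one. The regularity $u\in H^2(\Omega)\cap H^1_0(\Omega)$ and $\phi\in H^2(\Omega)$ is assumed in the statement.

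\textbf{Step 3: the initialization hypothesis.} The only remaining point is $\|\psi_h^0\|_{L^\infty(\Omega)}\lesssim 1$, which \Cref{thm:general_error} requires. I would treat it as an implicit standing assumption on the input of \Cref{alg:main_alg_discrete}, for example $\psi_h^0=0$.

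This hypothesis, and its use in \Cref{thm:general_error} to get $(\mathcal{D}(u,o_h^0),1)\lesssim 1$, is the only potentially delicate point. That bound holds because $o_h^0$ is bounded away from $\phi$ uniformly and $u$ is bounded, $u$ being $H^2$ with $n\le 3$. Once this is in place, the corollary is a direct application of \Cref{thm:general_error}.
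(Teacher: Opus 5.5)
Your proposal is correct and follows the paper's own argument: the paper likewise takes $\sfp$ to be the identity and $\Pi_h=J_h$ to be the $L^2$-orthogonal projection onto $\mathbb{P}^1(\mathcal{T}_h)$ to verify Assumptions~\ref{ass:basic} and~\ref{asm:Pih}, cites Lemmas~\ref{lemma:consistency_IPDG}, \ref{lemma:modified_Clement} and~\ref{lem:departure_K} for the remaining assumptions, and then applies Theorem~\ref{thm:general_error}. You are also right that the initialization bound $\|\psi_h^0\|_{L^\infty(\Omega)}\lesssim 1$ has to be treated as a standing assumption, since the corollary's statement leaves it implicit.
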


\subsection{Proximal EG}
\label{sec:EG}
The analysis of the proximal EG method closely follows that of the proximal IPDG method. The only difference lies in the choice of discrete spaces, which are now taken to be
$V_h := (\mathbb{P}^1(\mathcal{T}_h)\cap H_0^1(\Omega)) + \mathbb{P}^0(\mathcal{T}_h)$
and $W_h := \mathbb{P}^0(\mathcal{T}_h).$
The space $V_h$ is again equipped with the DG norm defined in \eqref{eq:DGnorm}.
The EG bilinear form $a_h$ is identical to the IPDG bilinear form defined by \eqref{eq:ipdg_form}, and the coercivity of $a_h$ again 
holds true if the penalty parameter $\sigma$ is large enough. 
It is then straightforward to verify that the error estimate from 
Corollary~\ref{cor:errorDG} holds true for the proximal EG method as well. 

\subsection{Proximal H-IP}\label{sec:HIP}

The H-IP method introduces facet unknowns to enable static condensation; cf.~\Cref{rem:StaticCondensation}. Recall from \Cref{sec:hybrid_intro} that the discrete spaces are defined as
$V_h := V_{\mathcal{T}_h} \times V_{\mathcal{F}_h} := \mathbb{P}^1(\mathcal{T}_h) \times \mathbb{P}^1(\mathcal{F}_h)$ and $W_h := \mathbb{P}^0(\mathcal{T}_h),$ 
and that a generic element of $V_h$ is denoted by $v_h = (v_{\mathcal{T}_h}, v_{\mathcal{F}_h}) \in V_h$, while its restriction to $T \in \mathcal{T}_h$ is denoted by $(v_T, v_{\partial T}) := (v_{\mathcal{T}_h}|_T,\, v_{\mathcal{F}_h}|_{\partial T}) \in V_T := \mathbb{P}^1(T) \times \mathbb{P}^1(\mathcal{F}_T)$.
The space $V_h$ is equipped with the norm
\begin{align}\label{eq:HIPnorm}
\|v_h\|_{V_h}^2 := \sum_{T \in \mathcal{T}_h} \bigg\{ \|\nabla v_T\|_{L^2(T)}^2
+ h_T^{-1} \|v_T - v_{\partial T}\|_{L^2(\partial T)}^2 \bigg\}, \qquad \forall v_h \in V_h.
\end{align}
The local H-IP bilinear form $a_T$ is defined on $V_T \times V_T$ as
\begin{align}
a_T((z_T,z_{\partial T}),(v_T,v_{\partial T})) := {}& (\nabla z_T, \nabla v_T)_T
- ( \nabla z_T \cdot \bfn_T, v_T - v_{\partial T} )_{\partial T} \nonumber \\ 
& - ( \nabla v_T \cdot \bfn_T, z_T - z_{\partial T} )_{\partial T}
+ \frac{\sigma}{h_T} ( z_T - z_{\partial T}, v_T - v_{\partial T})_{\partial T},
\end{align}
and the global bilinear form $a_h$ on $V_h \times V_h$ is defined as
\begin{equation}
\label{eq:hip_form}
a_h(z_h, v_h) := \sum_{T \in \mathcal{T}_h} a_T((z_T,z_{\partial T}),(v_T,v_{\partial T})).
\end{equation}
If the penalty parameter $\sigma$ is large enough, the discrete bilinear form $a_h$ is coercive (see \cite[Theorem~3.3]{FabienKnepleyRiviere2019}). Moreover, the discrete bilinear form $a_h$ is also continuous. Hence, properties \eqref{eq:coercivity_a} and \eqref{eq:continuity_a} hold true. 

The linear map $\sfp:V_h\to \mathbb{P}^1(\mathcal{T}_h)$ is such that $\sfp(v_h)=v_{\mathcal{T}_h}$ for all $v_h = (v_{\mathcal{T}_h}, v_{\mathcal{F}_h}) \in V_h$. Moreover, the linear map 
$\Pi_h: H^1(\mathcal{T}_h) \rightarrow V_h$ is defined as
\begin{equation} 
\label{eq:Pi_h_hybrid}
\Pi_h v := (\Pi^1_{\mathcal{T}_h} v, \Pi^1_{\mathcal{F}_h} \{ v \} ), \qquad \forall v\in H^1(\mathcal{T}_h), 
\end{equation}
where $\Pi^1_{\mathcal{T}_h}$ and $\Pi^1_{\mathcal{F}_h}$ are the $L^2$-orthogonal projections onto $V_{\mathcal{T}_h}=\mathbb{P}^1(\mathcal{T}_h)$ and $V_{\mathcal{F}_h}=\mathbb{P}^1(\mathcal{F}_h)$, respectively. Notice that both operators are defined locally: $(\Pi^1_{\mathcal{T}_h} v)|_T := \Pi^1_T(v|_T)$ for all $T \in \mathcal{T}_h$, with $\Pi^1_T$ the $L^2$-orthogonal projection onto $\mathbb{P}^1(T)$, and $(\Pi^1_{\mathcal{F}_h} \mu)|_F := \Pi^1_F(\mu|_F)$ for all $F \in \mathcal{F}_h \cup \mathcal{F}_h^\partial$, with $\Pi^1_F$ the $L^2$-orthogonal projection onto $\mathbb{P}^1(F)$. 
The following result shows that Assumption~\ref{ass:basic} indeed holds true.

\begin{lemma}[Fulfillment of Assumption~\ref{ass:basic}] \label{lem:H-IP_basic}
Properties~\eqref{eq:bnd_sfp} and~\eqref{eq:Pihprop} hold true.
\end{lemma}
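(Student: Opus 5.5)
We need to prove two things: \eqref{eq:bnd_sfp}, namely $\|v_{\mathcal{T}_h}\|_{\DG}\lesssim \|v_h\|_{V_h}$, and the two parts of \eqref{eq:Pihprop} for $\Pi_h$ defined in \eqref{eq:Pi_h_hybrid}.

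\textbf{Boundedness of $\sfp$.} Compare the DG norm with the H-IP norm \eqref{eq:HIPnorm}. The broken gradient parts coincide, so only the jump terms need control.
\begin{itemize}
\item For an interior facet $F=\partial T_-\cap\partial T_+$, insert the single-valued facet unknown $v_F$ and use the triangle inequality:
\[
\|[v_{\mathcal{T}_h}]\|_{L^2(F)}\le \|v_{T_-}-v_F\|_{L^2(F)}+\|v_{T_+}-v_F\|_{L^2(F)}.
\]
\item For a boundary facet, $v_F=0$ by definition of $\mathbb{P}^1(\mathcal{F}_h)$. Hence $[v_{\mathcal{T}_h}]=v_T=v_T-v_{\partial T}$ on $F$.
\item Shape regularity gives $h_F^{-1}\lesssim h_T^{-1}$ for $F\in\mathcal{F}_T$.
\item Each facet is shared by at most two cells.
\end{itemize}
Summing these bounds gives $\|v_{\mathcal{T}_h}\|_{\DG}^2\le 2\max(1,\,c\,h_T/h_F)\,\|v_h\|_{V_h}^2$, hence $C_{\sfp}$ is uniform in $h$.

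\textbf{Orthogonality in \eqref{eq:Pihprop}.} Here $J_h v=\sfp(\Pi_h v)=\Pi^1_{\mathcal{T}_h}v$, the $L^2$-orthogonal projection onto $\mathbb{P}^1(\mathcal{T}_h)$. Since $W_h=\mathbb{P}^0(\mathcal{T}_h)\subset\mathbb{P}^1(\mathcal{T}_h)$, we get $(J_hv-v,w_h)=0$ for every $w_h\in W_h$ immediately.

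\textbf{Stability of $\Pi_h$.} This is the main work. Fix $T$. First, the gradient term is bounded by $\|\nabla\Pi^1_T v\|_{L^2(T)}\lesssim\|\nabla v\|_{L^2(T)}$. This follows from the $H^1$-stability of the local $L^2$-projection on shape-regular simplices. One way to see it: write
\[
\Pi^1_T v-\Pi^0_T v=\Pi^1_T(v-\Pi^0_Tv),
\]
then apply an inverse inequality, $L^2$-stability, and the Poincaré inequality $\|v-\Pi^0_Tv\|_{L^2(T)}\lesssim h_T\|\nabla v\|_{L^2(T)}$.

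Next, for $F\in\mathcal{F}_T$, the facet component is $v_F=\Pi^1_F\{v\}$. Decompose
\[
v_T-v_F=\Pi^1_F\big(\Pi^1_Tv-v|_T\big)+\Pi^1_F\big(v|_T-\{v\}\big),
\]
using that $\Pi^1_F$ reproduces the affine trace of $\Pi^1_Tv$. The two pieces are handled as follows.
\begin{itemize}
\item \emph{First piece.} Use $L^2(F)$-stability of $\Pi^1_F$ and the discrete trace inequality
\[
h_T^{-1}\|w\|_{L^2(\partial T)}^2\lesssim h_T^{-2}\|w\|_{L^2(T)}^2+\|\nabla w\|_{L^2(T)}^2 \qquad\text{with } w=\Pi^1_Tv-v.
\]
The local approximation bound $\|v-\Pi^1_Tv\|_{L^2(T)}\lesssim h_T\|\nabla v\|_{L^2(T)}$ then yields a bound by $\|\nabla v\|_{L^2(T)}^2$.
\item \emph{Second piece.} On an interior facet, $v|_T-\{v\}=\pm\tfrac12[v]$, so it is controlled by $h_F^{-1}\|[v]\|_{L^2(F)}^2$. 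On a boundary facet, $v_F=0$ and $v|_T-\{v\}=0$, so only the term $v_T=\Pi^1_T v$ remains. Its contribution is bounded by the first piece plus $\|v|_T\|_{L^2(F)}=\|[v]\|_{L^2(F)}$.
\end{itemize}

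Summing over all $T\in\mathcal{T}_h$ gives $\|\Pi_hv\|_{V_h}\le C_\Pi\|v\|_{\DG}$, with $C_\Pi$ depending only on shape regularity. Together with the previous two steps, this establishes \Cref{ass:basic} (ii)–(iii) for H-IP.
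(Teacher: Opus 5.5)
Your proposal is correct and takes essentially the same route as the paper. For stability you use the $H^1$-stability of $\Pi^1_T$, the splitting $\Pi^1_Tv-\Pi^1_F\{v\}=\Pi^1_F(\Pi^1_Tv-v|_T)+\Pi^1_F(v|_T-\{v\})$ with a trace inequality and $\pm\tfrac12[v]$ on interior facets, and for orthogonality the inclusion $W_h\subset\mathbb{P}^1(\mathcal{T}_h)$; you also spell out the triangle-inequality proof of \eqref{eq:bnd_sfp}, which the paper only cites. The one minor difference is on boundary facets: you take the facet value to be zero, consistent with $\mathbb{P}^1(\mathcal{F}_h)$ vanishing there and hence with $\Pi_hv\in V_h$, and control it by the boundary-jump term $h_F^{-1}\|v\|_{L^2(F)}^2$ of $\|\cdot\|_{\DG}$, whereas the paper writes the facet value as $\Pi^1_Fv$ and bounds by $h_T^{1/2}\|\nabla v\|_{L^2(T)}$ alone.
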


\begin{proof}
Proof of~\eqref{eq:bnd_sfp}. We need to verify that $\|\sfp(v_h)\|_{\DG} \lesssim \|v_h\|_{V_h}$ for all $v_h\in V_h$. This is a classical result that uses the shape-regularity of the mesh and the fact that $v_{\mathcal{F}_h}|_F = 0$ for all $F \in \mathcal{F}_h^{\partial}$, see, e.g., \cite[Equ.~(2.20)]{KirkRiviereMasri2023}.

Proof of~\eqref{eq:Pihprop}. We first show that $\|\Pi_h v\|_{V_h} \lesssim \|v\|_{\DG}$ for all $v\in H^1(\mathcal{T}_h)$. We observe that 
\[
\|\Pi_h v\|_{V_h}^2 =
\sum_{T\in\mathcal{T}_h}\|\nabla \Pi^1_T v\|_{L^2(T)}^2
+ \sum_{T\in\mathcal{T}_h} h_T^{-1}\|\Pi^1_T v - \Pi^1_{\mathcal{F}_h}\{v\}\|_{L^2(\partial T)}^2. 
\]
For the first term on the right-hand side, the $H^1$-stability of $\Pi^1_T$ readily gives that
$\|\nabla \Pi^1_T v \|_{L^2(T)}$ $\lesssim \|\nabla v \|_{L^2(T)}$ for all $T\in\mathcal{T}_h$.
To bound the second term, we consider a face $F\in\mathcal{F}_T$. If $F$ is a boundary face,
we have
\begin{align*}
\|\Pi^1_T v - \Pi^1_{\mathcal{F}_h}\{v\}\|_{L^2(F)} &= \|\Pi^1_T v - \Pi^1_Fv\|_{L^2(F)}
= \|\Pi^1_F(\Pi^1_Tv-v)\|_{L^2(F)} \\
&\le \|\Pi^1_Tv-v\|_{L^2(F)} \lesssim h_T^{\frac12} \|\nabla v\|_{L^2(T)}.
\end{align*}
Instead, if $F$ is an interior face, we obtain using similar arguments (with $\epsilon_{T,F}=\bfn_T{\cdot}\bfn_F=\pm1$)
\begin{align*}
\|\Pi^1_T v - \Pi^1_{\mathcal{F}_h}\{v\}\|_{L^2(F)} &= \|\Pi^1_T v - \Pi^1_F(v|_T + \epsilon_{T,F}\tfrac12 [v])\|_{L^2(F)} \\
&\le \|\Pi^1_F(\Pi^1_Tv-v)\|_{L^2(F)} + \tfrac12 \|\Pi^1_F[v]\|_{L^2(F)}\\
&\lesssim h_T^{\frac12} \|\nabla v\|_{L^2(T)} + \|[v]\|_{L^2(F)}.
\end{align*}
Collecting the above bounds, squaring, and summing over all the elements $T\in\mathcal{T}_h$ proves that $\|\Pi_h v\|_{V_h} \lesssim \|v\|_{\DG}$ for all $v\in H^1(\mathcal{T}_h)$. 
Finally, since $J_h:\sfp \circ \Pi_h = \Pi^1_{\mathcal{T}_h}$ and $W_h=\mathbb{P}^0(\mathcal{T}_h) \subset V_{\mathcal{T}_h}:= \mathbb{P}^1(\mathcal{T}_h)$, we readily infer that 
$(J_hv-v,w_h)=0$ for all $v\in H^1(\mathcal{T}_h)$ and all $w_h\in W_h$.
\end{proof}

Since $J_h$ is the $L^2$-orthogonal projection onto $V_{\mathcal{T}_h}:= \mathbb{P}^1(\mathcal{T}_h)$, Assumption~\ref{asm:Pih} is satisfied. It remains to verify Assumption~\ref{asm:consistency}.

\begin{lemma}[Bound on consistency error]\label{lemma:consistency_HIP}
Assumption~\ref{asm:consistency} holds true.
\end{lemma}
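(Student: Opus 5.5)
I will mirror the proof of Lemma~\ref{lemma:consistency_IPDG}. Fix $v_h=(v_{\mathcal{T}_h},v_{\mathcal{F}_h})\in V_h$. Since $\mathcal{C}_h v_{\mathcal{T}_h}\in H^1_0(\Omega)$ and $u\in H^2(\Omega)$, integrating by parts gives $a(u,\mathcal{C}_h v_{\mathcal{T}_h})=-(\Delta u,\mathcal{C}_h v_{\mathcal{T}_h})$. This yields the splitting $\langle\delta_h(u),v_h\rangle_{V_h',V_h}=\delta_1-\delta_2$ with
\[
\delta_1:=a_h(\Pi_h u,v_h)+(\Delta u,v_{\mathcal{T}_h}),\qquad \delta_2:=(\Delta u,v_{\mathcal{T}_h}-\mathcal{C}_h v_{\mathcal{T}_h}).
\]
The term $\delta_2$ is handled exactly as before. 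By \eqref{eq:approx_H1_Ch} and \eqref{eq:bnd_sfp} (Lemma~\ref{lem:H-IP_basic}), $|\delta_2|\lesssim h\|v_{\mathcal{T}_h}\|_{\DG}\lesssim h\|v_h\|_{V_h}$.

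For $\delta_1$, I first record the consistency of the H-IP form for the exact solution. I use the local pair $(u|_T,u|_{\partial T})$; since $u\in H^2$ the traces are single valued and vanish on $\partial\Omega$. Elementwise integration by parts gives $(\nabla u,\nabla v_T)_T=-(\Delta u,v_T)_T+(\nabla u\cdot\bfn_T,v_T)_{\partial T}$. The term $z_T-z_{\partial T}$ vanishes for $z=u$. Moreover, $\sum_T(\nabla u\cdot\bfn_T,v_{\partial T})_{\partial T}=0$, because the normal flux of $u$ is single valued on interior faces and $v_{\mathcal{F}_h}=0$ on boundary faces. Hence $\sum_T a_T((u,u),(v_T,v_{\partial T}))=-(\Delta u,v_{\mathcal{T}_h})$.

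It follows that $\delta_1=\sum_T a_T((\eta_T,\eta_{\partial T}),(v_T,v_{\partial T}))$, where $\eta_T:=\Pi^1_Tu-u$ and $\eta_{\partial T}:=\Pi^1_{\mathcal{F}_h}u-u$. Here I use $\{u\}=u$ for $u\in H^1_0$, and I extend $a_T$ to $H^2$ arguments, which makes sense since $\nabla u\cdot\bfn_T\in L^2(\partial T)$. I then bound the four terms of $a_T$ by Cauchy--Schwarz, pairing against the norm \eqref{eq:HIPnorm}:
\begin{itemize}
\item $|(\nabla\eta_T,\nabla v_T)_T|\le\|\nabla\eta_T\|_{L^2(T)}\|\nabla v_T\|_{L^2(T)}$;
\item $|(\nabla\eta_T\cdot\bfn_T,v_T-v_{\partial T})_{\partial T}|\le h_T^{1/2}\|\nabla\eta_T\|_{L^2(\partial T)}\,h_T^{-1/2}\|v_T-v_{\partial T}\|_{L^2(\partial T)}$;
\item $|(\nabla v_T\cdot\bfn_T,\eta_T-\eta_{\partial T})_{\partial T}|\lesssim\|\nabla v_T\|_{L^2(T)}\,h_T^{-1/2}\|\eta_T-\eta_{\partial T}\|_{L^2(\partial T)}$, using the discrete trace inequality for $\nabla v_T\in\mathbb{P}^0(T)$;
\item the penalty term is bounded by $h_T^{-1/2}\|\eta_T-\eta_{\partial T}\|_{L^2(\partial T)}$ times $h_T^{-1/2}\|v_T-v_{\partial T}\|_{L^2(\partial T)}$.
\end{itemize}

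The main point is to show that the interpolation quantities are $O(h_T)\,|u|_{H^2(T)}$. The bound $\|\nabla\eta_T\|_{L^2(T)}\lesssim h_T|u|_{H^2(T)}$ is standard. For $h_T^{1/2}\|\nabla\eta_T\|_{L^2(\partial T)}$, I apply the continuous trace inequality to $\nabla\eta_T$ together with the $H^2$-approximation of $\Pi^1_T$, which gives $h_T|u|_{H^2(T)}$. For the remaining term, I note that $\eta_T-\eta_{\partial T}=\Pi^1_Tu-\Pi^1_F u=\Pi^1_F(\Pi^1_Tu-u)$ on each face $F$. By $L^2(F)$-stability of $\Pi^1_F$ and the trace inequality, $h_T^{-1/2}\|\eta_T-\eta_{\partial T}\|_{L^2(\partial T)}\lesssim h_T^{-1}\|\eta_T\|_{L^2(T)}+\|\nabla\eta_T\|_{L^2(T)}\lesssim h_T|u|_{H^2(T)}$.

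Summing over $T$ and applying a discrete Cauchy--Schwarz inequality gives $|\delta_1|\lesssim h|u|_{H^2(\Omega)}\|v_h\|_{V_h}$. Combining this with the bound on $\delta_2$ yields $\|\delta_h(u)\|_{V_h'}\lesssim h$.
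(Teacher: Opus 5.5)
Your proposal is correct and follows the paper's proof. You use the same splitting $\langle\delta_h(u),v_h\rangle_{V_h',V_h}=\delta_1-\delta_2$, bound $\delta_2$ through \eqref{eq:approx_H1_Ch} and \eqref{eq:bnd_sfp}, and bound $\delta_1$ via the consistency of the H-IP form combined with the approximation and trace properties of $\Pi^1_T$; in doing so you write out the ``standard arguments'' that the paper only cites. The one cosmetic difference is that your face term $h_T^{-1/2}\|\Pi^1_F(\Pi^1_T u-u)\|_{L^2(\partial T)}$ stands in for the jump contribution in the paper's $\|J_hu-u\|_{\DG}$, and both are $O(h_T)\,|u|_{H^2(T)}$.
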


\begin{proof}
For all $v_h \in V_h$, we can write (since $\mathcal{C}_h \sfp(v_h) \in H_0^1(\Omega)$)
$\langle  \delta_h(u), v_h \rangle_{V_h', V_h} = \delta_1-\delta_2$ with
\begin{align*} 
\delta_1 &:= a_h(\Pi_hu,v_h) + (\Delta u,\sfp(v_h)), \\
\delta_2 &:= (\Delta u,\sfp(v_h)-\mathcal{C}_h\sfp(v_h)).
\end{align*}
Using standard arguments shows that
\begin{align*}
|\delta_1| &\lesssim \left(\|J_h u - u\|^2_{\DG} +\sum_{T \in \mathcal{T}_h} h_T \|\nabla(J_h u -  u) \cdot \bm n_T\|^2_{L^2(\partial T)}\right)^{1/2} \|v_h\|_{V_h} \\
&\lesssim h \|v_h\|_{V_h}. 
\end{align*}
Moreover, invoking~\eqref{eq:approx_H1_Ch} and the stability of $\sfp$ gives
\[
|\delta_2| \lesssim \|\sfp(v_h)-\mathcal{C}_h\sfp(v_h)\|_{L^2(\Omega)} \lesssim
h\|\sfp(v_h)\|_{\DG} \lesssim h\|v_h\|_{V_h}.
\]
Combining the two bounds completes the proof.
\end{proof}

Everything is now in place to infer from Theorem~\ref{thm:general_error} the error estimate for the proximal H-IP method.

\begin{corollary}[Error estimate for proximal H-IP]\label{cor:errorHIP}
Assume that $u \in H^2(\Omega) \cap H^1_0(\Omega)$ and $\phi \in H^2(\Omega)$ with $\phi \vert_{\partial \Omega} \in \mathbb{P}^1(\mathcal{F}^\partial_h)$. Then, the proximal H-IP iterates generated  by \Cref{alg:main_alg_discrete} satisfy the following error estimate: For all $m \geq 1$, 
\begin{align} \label{eq:error_H-IP}
\|\sfp(u_h^m) - u\|^2_{\DG}  + \|\lambda_h^m - \lambda \|_{H^{-1}(\Omega)}^2 + \|o_h^m - u\|_{L^2(\Omega)}^2 
& \lesssim \frac{1}{\sum_{k=1}^m \alpha_k} + h^2. 
\end{align}
\end{corollary}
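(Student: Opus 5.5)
The plan is to obtain \eqref{eq:error_H-IP} as a direct application of \Cref{thm:general_error}, so the work consists of checking that every hypothesis of that theorem holds for the proximal H-IP method under the stated regularity assumptions.

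First, \Cref{ass:basic} is in place. Coercivity and continuity of $a_h$ with respect to the norm \eqref{eq:HIPnorm} hold for $\sigma$ large enough, as recalled above. Properties \eqref{eq:bnd_sfp} and \eqref{eq:Pihprop} are exactly \Cref{lem:H-IP_basic}. Since $J_h=\Pi^1_{\mathcal{T}_h}$ is the broken $L^2$-orthogonal projection onto $\mathbb{P}^1(\mathcal{T}_h)$, \Cref{asm:Pih} follows from standard local approximation estimates.

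\begin{itemize}
\item For \eqref{eq:approx_H1_Jh}, I would use the elementwise Poincar\'e inequality $\|v-\Pi^1_Tv\|_{L^2(T)}\le\|v-\Pi^0_Tv\|_{L^2(T)}\lesssim h_T\|\nabla v\|_{L^2(T)}$.
\item For \eqref{eq:approx_Jh}, I would use the Bramble--Hilbert lemma together with a discrete trace inequality to control the jump terms of $u-J_hu$. These jumps coincide with those of $J_hu$, since $u\in H^1_0(\Omega)$ and so $[u]=0$ on all facets, including boundary ones.
\end{itemize}

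Next, \Cref{asm:Ch} is \Cref{lemma:modified_Clement}, and \Cref{asm:consistency} is \Cref{lemma:consistency_HIP}. \Cref{ass:Ch_K} follows from \Cref{lem:departure_K}, whose only extra hypothesis, $\phi|_{\partial\Omega}\in\mathbb{P}^1(\mathcal{F}_h^\partial)$, is assumed in the corollary. That lemma relies on \Cref{lemma:Kh}, which holds here because $W_h=\mathbb{P}^0(\mathcal{T}_h)$ and $\sfp(u_h^m)=u_{\mathcal{T}_h}^m$.

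Finally, I would check that the initialization condition $\|\psi_h^0\|_{L^\infty(\Omega)}\lesssim1$ from \Cref{thm:general_error} is satisfied, or assume it as in the IPDG case. \Cref{cor:errorDG} states no explicit hypothesis on $\psi_h^0$, so this assumption is implicit there. Invoking \Cref{thm:general_error} then yields \eqref{eq:error_H-IP} directly.

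The only step needing real care is the verification of \eqref{eq:approx_Jh} in the DG norm, in particular the boundary-facet jump contributions. This is routine: on each facet, $\|J_hu-u\|_{L^2(F)}\lesssim h_T^{3/2}|u|_{H^2(T)}$ by the trace inequality, and multiplying by the weight $h_F^{-1/2}$ gives the required $O(h)$ contribution.
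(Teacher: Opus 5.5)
Your proposal is correct and follows the paper's argument: the paper also obtains the corollary by checking Assumptions~\ref{ass:basic}, \ref{asm:Pih}, \ref{asm:Ch}, \ref{asm:consistency} and~\ref{ass:Ch_K} for H-IP and then invoking \Cref{thm:general_error}. It does this through \Cref{lem:H-IP_basic}, the fact that $J_h=\Pi^1_{\mathcal{T}_h}$, \Cref{lemma:modified_Clement}, \Cref{lemma:consistency_HIP} and \Cref{lem:departure_K}, and it leaves the bound $\|\psi_h^0\|_{L^\infty(\Omega)}\lesssim 1$ implicit, as you noticed. One small fix: the facet estimate for $u-J_hu$ needs the continuous (multiplicative) trace inequality, not a discrete one, since $u-J_hu$ is not a polynomial. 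That is in fact the estimate your final paragraph uses.
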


\begin{remark}[Continuous facet polynomials for H-IP]
\label{rmk:hip}
An alternative formulation of the H-IP method uses globally continuous facet unknowns
in the space $V_{\mathcal{F}_h}:= \mathbb{P}^r(\mathcal{F}_h) \cap C^0(\mathcal{F}_h),$ $r\ge1$.
This choice reduces the number of global degrees of freedom, and was first studied and analyzed in \cite{cockburn2009,guzey2007}. Similar arguments can be used to show that this method leads to the same error estimate as the one from Corollary~\ref{cor:errorHIP}.
\end{remark} 

\subsection{Proximal HHO}\label{sec:HHO}

Recall from Section~\ref{sec:hybrid_intro} that the discrete spaces are defined as
$V_h := V_{\mathcal{T}_h}  \times V_{\mathcal F_h} = \mathbb{P}^\ell (\mathcal{T}_h) \times \mathbb{P}^r(\mathcal{F}_h)$, with $\ell,  r \in \{0,1\}, \,\, \ell \leq r$, and $W_h := \mathbb{P}^0(\mathcal{T}_h)$. As for the H-IP method, a generic element of $V_h$ is denoted by $v_h = (v_{\mathcal{T}_h}, v_{\mathcal{F}_h}) \in V_h$, and the restriction of $v_h$ to $T \in \mathcal{T}_h$ is denoted by $(v_T, v_{\partial T})=(v_{\mathcal{T}_h} \vert_T, v_{\mathcal{F}_h} \vert_{\partial T}) \in 
V_T^{\ell,r}:= \mathbb{P}^\ell(T) \times \mathbb{P}^r(\mathcal{F}_T)$. 
The space $V_h$ is equipped with the norm $\|{\cdot}\|_{V_h}$ defined in~\eqref{eq:HIPnorm}. 

The devising of HHO methods is performed elementwise. For every element $T\in\mathcal{T}_h$, the local reconstruction operator $\mathsf{R}: V_T^{\ell, r} \to \mathbb{P}^{r+1}(T)$ is defined such that for all $(v_T,v_{\partial T}) \in V_T^{\ell,r}$,
\begin{equation} \label{eq:reconstruction}
(\nabla \mathsf{R}(v_T,v_{\partial T}), \nabla q)_{L^2(T)} := (\nabla v_T, \nabla q)_{L^2(T)} - ( v_T - v_{\partial T}, \nabla q \cdot \mathbf{n}_T )_{L^2(\partial T)} \quad \forall q \in \mathbb{P}^{r+1}(T),
\end{equation}
with the condition $(\mathsf{R}(v_T,v_{\partial T}) - v_T, 1)_{L^2(T)} := 0$.
Moreover, the local stabilization operator $\mathsf{S}: V_T^{\ell, r} \to \mathbb{P}^r(\mathcal{F}_T)$ is defined as
\begin{equation}
\mathsf{S}(v_T,v_{\partial T}) := \Pi_{\partial T}^r (v_T|_{\partial T} - v_{\partial T} + (I - \Pi_T^\ell) \mathsf{R}(v_T,v_{\partial T})|_{\partial T}),
\end{equation}
where $\Pi_{\partial T}^r$ and $\Pi_T^\ell$ are the $L^2$-orthogonal projections  onto $\mathbb{P}^r(\mathcal{F}_T)$ and $\mathbb{P}^\ell(T)$, respectively.
The local bilinear form $a_T$ is defined on $V_T^{\ell, r} \times V_T^{\ell,r}$ as
\begin{align}
a_T((v_T,v_{\partial T}),(w_T,w_{\partial T})) := {}& (\nabla \mathsf{R}(v_T,v_{\partial T}), \nabla \mathsf{R}(w_T,w_{\partial T}))_{L^2(T)}  \\
&+ h_T^{-1} (\mathsf{S}(v_T,v_{\partial T}), \mathsf{S}(w_T,w_{\partial T}) )_{L^2(\partial T)}.\nonumber
\end{align}
The global bilinear form $a_h$ is defined on $V_h \times V_h$ as
\begin{equation}\label{eq:hho_form}
a_h(v_h,w_h) := \sum_{T \in \mathcal{T}_h} a_T((v_T,v_{\partial T}),(w_T,w_{\partial T})).
\end{equation}
It is convenient to define the global reconstruction operator $\mathsf{R}_h:V_h\rightarrow \mathbb{P}^{r+1}(\mathcal{T}_h)$ so that $\mathsf{R}_h(v_h)|_T:=\mathsf{R}(v_T,v_{\partial T})$ for all $T\in\mathcal{T}_h$ and all $v_h\in V_h$.
The coercivity and continuity of the discrete bilinear form $a_h$ follows from the following property (see \cite{DiPEL:14}):
\begin{align}\label{eq:coercivity_HHO} 
\|v_h\|_{V_h}^2 \lesssim a_h(v_h,v_h) \lesssim \|v_h\|_{V_h}^2, \qquad \forall v_h\in V_h.
\end{align}
Hence, properties \eqref{eq:coercivity_a} and \eqref{eq:continuity_a} hold true. 

The HHO method offers two salient advantages with respect to the H-IP method owing to the use of the reconstruction operator. First, it avoids the need to consider a large enough penalty parameter $\sigma$. Second, it delivers higher-order decay rates on the consistency error (see Lemma~\ref{lemma:consistencyerror_HHO} below). For instance, the lowest-order HHO setting with $\ell=r=0$ leads to the same error estimates as the choice $\ell=r=1$ for H-IP. It should be noticed though, that the analysis of the proximal HHO method using the above unified framework is only possible for $\ell=1$. In the case $\ell=0$ and $r\in\{0,1\}$, a different (and simpler) analysis route is available. It is inspired from \cite{Cicuttin2020} and exposed at the end of this section.  

The linear map $\sfp:V_h\to \mathbb{P}^\ell(\mathcal{T}_h)$ is such that $\sfp(v_h)=v_{\mathcal{T}_h}$ for all $v_h = (v_{\mathcal{T}_h}, v_{\mathcal{F}_h}) \in V_h$. Moreover, the linear map 
$\Pi_h: H^1(\mathcal{T}_h) \rightarrow V_h$ is defined as
\begin{equation} 
\label{eq:Pi_h_hybrid_HHO}
\Pi_h v := (\Pi^\ell_{\mathcal{T}_h} v, \Pi^r_{\mathcal{F}_h} \{ v \} ), \qquad \forall v\in H^1(\mathcal{T}_h).
\end{equation}
Proceeding as in the proof of Lemma~\ref{lem:H-IP_basic} readily shows the following.

\begin{lemma}[Fulfillment of Assumption~\ref{ass:basic}]
Properties~\eqref{eq:bnd_sfp} and~\eqref{eq:Pihprop} hold true.
\end{lemma}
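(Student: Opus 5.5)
We verify \eqref{eq:bnd_sfp} and \eqref{eq:Pihprop} for the HHO spaces $V_h=\mathbb{P}^\ell(\mathcal{T}_h)\times\mathbb{P}^r(\mathcal{F}_h)$ with $\ell\le r$, mimicking the proof of Lemma~\ref{lem:H-IP_basic} and tracking the changes in polynomial degrees.

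\emph{Boundedness of $\sfp$.} Since $\sfp(v_h)=v_{\mathcal{T}_h}$ and the norm $\|\cdot\|_{V_h}$ is \eqref{eq:HIPnorm}, the elementwise gradient terms of $\|v_{\mathcal{T}_h}\|_{\DG}^2$ appear verbatim in $\|v_h\|_{V_h}^2$. For the jump terms, on an interior face $F=\partial T_-\cap\partial T_+$ we insert the single-valued facet unknown:
\[
[v_{\mathcal{T}_h}]=(v_{T_-}-v_{\mathcal{F}_h})-(v_{T_+}-v_{\mathcal{F}_h}).
\]
On a boundary face we use $v_{\mathcal{F}_h}|_F=0$, so $[v_{\mathcal{T}_h}]=v_T-v_{\partial T}$ there. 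Combining this with $h_F\simeq h_T$ (shape-regularity) bounds each $h_F^{-1}\|[v]\|_{L^2(F)}^2$ by the local terms $h_T^{-1}\|v_T-v_{\partial T}\|^2_{L^2(\partial T)}$. Summing gives $C_\sfp$ independent of $h$. The degrees $\ell,r$ play no role here.

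\emph{Stability of $\Pi_h$.} Write $\Pi_h v=(\Pi^\ell_{\mathcal{T}_h}v,\Pi^r_{\mathcal{F}_h}\{v\})$. The gradient term is handled by the $H^1$-stability of $\Pi^\ell_T$; for $\ell=0$ this term vanishes. For the facet term on $F\in\mathcal{F}_T$, the key point is that $\ell\le r$ gives $\Pi^\ell_T v|_F\in\mathbb{P}^r(F)$. Hence $\Pi^r_F\Pi^\ell_T v=\Pi^\ell_T v$ on $F$, and therefore
\[
\Pi^\ell_T v-\Pi^r_F\{v\}=\Pi^r_F(\Pi^\ell_T v-v|_T)\pm\tfrac12\Pi^r_F[v]
\]
on interior faces, while the jump term is absent on boundary faces. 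Using the $L^2(F)$-stability of $\Pi^r_F$ and the trace-approximation estimate $\|\Pi^\ell_T v-v\|_{L^2(\partial T)}\lesssim h_T^{1/2}\|\nabla v\|_{L^2(T)}$, valid for $\ell\ge0$ since $\ell=0$ is the mean value, we obtain
\[
h_T^{-1}\|\Pi^\ell_Tv-\Pi^r_{\mathcal{F}_h}\{v\}\|^2_{L^2(\partial T)}\lesssim \|\nabla v\|^2_{L^2(T)}+\sum_{F\in\mathcal{F}_T}h_F^{-1}\|[v]\|^2_{L^2(F)}.
\]
Summing over $T$ gives $\|\Pi_hv\|_{V_h}\lesssim\|v\|_{\DG}$.

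\emph{Orthogonality.} $J_h=\sfp\circ\Pi_h=\Pi^\ell_{\mathcal{T}_h}$ is the $L^2$-orthogonal projection onto $\mathbb{P}^\ell(\mathcal{T}_h)\supseteq\mathbb{P}^0(\mathcal{T}_h)=W_h$, so $(J_hv-v,w_h)=0$ for all $w_h\in W_h$.

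The only step needing care is the facet estimate, specifically the use of $\ell\le r$ to commute $\Pi^r_F$ with the cell projection. This is why the combination $(\ell,r)=(1,0)$ is excluded from this argument.
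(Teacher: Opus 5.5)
Your proposal is correct and is essentially the paper's proof, which reruns the H-IP argument of Lemma~\ref{lem:H-IP_basic} with $\ell\le r$ guaranteeing $\Pi^r_F\Pi^\ell_T v=\Pi^\ell_T v$ on each face (your direct proof of \eqref{eq:bnd_sfp}, inserting the facet unknown, just spells out the classical bound the paper cites). One minor point, which the paper shares: $\mathbb{P}^r(\mathcal{F}_h)$ vanishes on $\mathcal{F}_h^\partial$, so for $\Pi_h v$ to lie in $V_h$ its facet component must be zero there; the boundary-face contribution is then $h_T^{-1}\|\Pi^\ell_T v\|_{L^2(F)}^2\lesssim\|\nabla v\|_{L^2(T)}^2+h_F^{-1}\|[v]\|_{L^2(F)}^2$ (using $[v]=v|_F$), not jump-free as you write, but your final local bound already contains that term, so the conclusion is unaffected.
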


Since the proximal HHO methods satisfy Assumption~\ref{ass:basic}, all the results from Section~\ref{sec:basic_pty} (existence and uniqueness of iterates, energy dissipation) and Section~\ref{sec:min_reg} (error estimates with minimal regularity) are applicable to them. The error analysis with additional regularity per the above unified framework is only applicable, though, when $\ell=1$. Indeed, since $J_h$ is the $L^2$-orthogonal projection onto $V_{\mathcal{T}_h}:= \mathbb{P}^\ell(\mathcal{T}_h)$, Assumption~\ref{asm:Pih} is satisfied only in this case. Thus, we distinguish the case $\ell=1$ and $\ell=0$ in what follows. 

\subsubsection{Error estimate for $\ell=1$ and $r=1$}

Proceeding as in the proof of Lemma~\ref{lemma:consistency_HIP} shows that Assumption~\ref{asm:consistency} holds true. (This result is actually true regardless of the value 
of $\ell$.) Therefore, everything is in place to infer from Theorem~\ref{thm:general_error} the error estimate for the proximal HHO method with $\ell=r=1$.

\begin{corollary}[Error estimate for HHO, $\ell=r=1$]\label{cor:error_HHO_l=1}
Assume that $u \in H^2(\Omega) \cap H^1_0(\Omega)$ and $\phi \in H^2(\Omega)$ with $\phi \vert_{\partial \Omega} \in \mathbb{P}^1(\mathcal{F}^\partial_h)$. Set $\ell=r=1$ in the HHO method. Then, the proximal HHO iterates generated by \Cref{alg:main_alg_discrete} satisfy the error estimate~\eqref{eq:error_H-IP}. 
\end{corollary}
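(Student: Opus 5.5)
The corollary follows from Theorem~\ref{thm:general_error} once its hypotheses are checked for the HHO method with $\ell=r=1$. I will go through the assumptions in turn.

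First, Assumption~\ref{ass:basic}. Coercivity and continuity of $a_h$ come from \eqref{eq:coercivity_HHO}. Properties \eqref{eq:bnd_sfp} and \eqref{eq:Pihprop} are given by the preceding lemma, whose proof is the same as that of Lemma~\ref{lem:H-IP_basic} with $\Pi^1_{\mathcal F_h}$ replaced by $\Pi^r_{\mathcal F_h}$. Since $\ell=1$, the map $J_h=\sfp\circ\Pi_h=\Pi^1_{\mathcal T_h}$ is the $L^2$-orthogonal projection onto $\mathbb P^1(\mathcal T_h)$. Hence Assumption~\ref{asm:Pih} holds by standard local polynomial approximation together with trace inequalities:
\begin{itemize}
\item the $L^2$ estimate with $h\|v\|_{\DG}$ uses a broken Poincaré inequality on each cell;
\item the $H^2$ estimate bounds $\|\nabla(u-J_hu)\|$ and $h_F^{-1/2}\|[u-J_hu]\|_F$ by $h|u|_{H^2}$, using $[u]=0$.
\end{itemize}
Assumptions~\ref{asm:Ch} and~\ref{ass:Ch_K} are supplied by Lemma~\ref{lemma:modified_Clement} and Lemma~\ref{lem:departure_K}, the latter using $\phi|_{\partial\Omega}\in\mathbb P^1(\mathcal F_h^\partial)$.

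The main step is Assumption~\ref{asm:consistency}. As in Lemma~\ref{lemma:consistency_HIP}, I split $\langle\delta_h(u),v_h\rangle=\delta_1-\delta_2$ with $\delta_1:=a_h(\Pi_hu,v_h)+(\Delta u,v_T)$. The term $\delta_2$ is bounded by $h\|v_h\|_{V_h}$ through \eqref{eq:approx_H1_Ch} and \eqref{eq:bnd_sfp}, exactly as before. For $\delta_1$ I use the HHO commuting property: $\mathsf R(\Pi_T^1 u,\Pi^1_{\partial T}u)$ is the elliptic projection of $u$ onto $\mathbb P^{2}(T)$. This holds because the right-hand side of \eqref{eq:reconstruction} reduces to $(\nabla u,\nabla q)_T$ by integration by parts, since $\Delta q\in\mathbb P^0\subset\mathbb P^\ell$ and $\nabla q\cdot\bfn_T\in\mathbb P^r$. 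Also, $\Pi^r_{\mathcal F_h}\{u\}=\Pi^r_F u$ since $u\in H^1_0$.

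With this, I write $(\Delta u,v_T)=-\sum_T(\nabla u,\nabla\mathsf R(v_T,v_{\partial T}))_T+\sum_T(\nabla u\cdot\bfn_T,v_T-v_{\partial T})_{\partial T}$. This uses the definition of $\mathsf R$ tested against the elliptic projection of $u$, and the fact that single-valued facet unknowns cancel $\nabla u\cdot\bfn$ across interior faces (they vanish on the boundary). The resulting difference is bounded in the standard HHO fashion by
\[
\Big(\|\nabla(u-\mathsf R_h\Pi_hu)\|^2+\sum_T h_T\|\nabla(u-\mathsf R_h\Pi_hu)\cdot\bfn_T\|_{\partial T}^2+\sum_T h_T^{-1}\|\mathsf S(\Pi_Tu)\|^2_{\partial T}\Big)^{1/2}\|v_h\|_{V_h}.
\]
Each of these terms is $\lesssim h|u|_{H^2}$. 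For the stabilization term I would use the polynomial consistency of $\mathsf S$ to reduce it to the approximation error of the elliptic projection. Thus $\|\delta_h(u)\|_{V_h'}\lesssim h$.

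With all four assumptions verified and $\|\psi_h^0\|_{L^\infty}\lesssim1$, Theorem~\ref{thm:general_error} gives \eqref{eq:error_H-IP}. The only delicate point is the facet-term bookkeeping in $\delta_1$. If the elliptic-projection route turns out to be awkward, I would fall back on the H-IP-style bound in which $J_hu$ replaces $\mathsf R_h\Pi_hu$; this gives the same order $h$.
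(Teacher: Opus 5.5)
Your proposal is correct and follows the paper's route. You verify Assumption~\ref{ass:basic}, then Assumption~\ref{asm:Pih} (since $\ell=1$ gives $J_h=\Pi^1_{\mathcal T_h}$), then Assumption~\ref{asm:consistency} through the split $\delta_1-\delta_2$ of Lemma~\ref{lemma:consistency_HIP}, and finally the $\mathcal C_h$ assumptions from Section~\ref{sec:Ch}, before applying Theorem~\ref{thm:general_error}; your $\delta_1$ is exactly the classical HHO consistency error $\tilde\delta_h(u)$ of Lemma~\ref{lemma:consistencyerror_HHO}, here with $s=1$. One slip needs correcting: testing \eqref{eq:reconstruction} with the elliptic projection $\pi_T u\in\mathbb P^2(T)$ gives $(\Delta u,v_{\mathcal T_h})=\sum_T\big[-(\nabla u,\nabla\mathsf R(v_T,v_{\partial T}))_T+(\nabla(u-\pi_T u)\cdot\bfn_T,v_T-v_{\partial T})_{\partial T}\big]$, not your displayed identity with $\nabla u\cdot\bfn_T$ in the facet term, which is false and would leave an $O(1)$ facet contribution; it is this corrected identity that produces the bound you then state.
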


\begin{remark}[Reconstructed solution]
As highlighted in~\eqref{eq:elliptic_proj_hho} below, one expects higher-order convergence rates for the reconstructed solution $\mathsf{R}_h(u_h^m)$. Our current analysis does not leverage this property. The main difficulty stems from bounding the feasibility error $e_K(u_h^m)$. 
\end{remark}

\subsubsection{Error estimate for $\ell=0$ and $r\in\{0,1\}$}

In this case, the error analysis does not rely on the above unified framework, but follows a somewhat simpler route inspired from~\cite{Cicuttin2020}.  
Recall that for $\ell=0$, $J_h=\Pi_{\mathcal{T}_h}^0$ is the $L^2$-orthogonal projection onto $\mathbb{P}^0(\mathcal{T}_h)$.
We slightly modify the definition of the consistency error, which is now defined as 
the linear form $\tilde{\delta}_h(u) \in V_h'$ so that
\begin{equation}
\langle \tilde{\delta}_h(u), v_h \rangle_{V_h' , V_h} := a_h(\Pi_h u,v_h)  + (\Delta u, \sfp(v_h)), \quad \forall v_h \in V_h. \label{eq:tilde_delta_hho}
\end{equation}

The following bound on the consistency error is classical in the analysis of HHO methods.
We refer the reader to \cite{DiPEL:14} and \cite{CiErPi:21} for more details.

\begin{lemma}[Bound on consistency error]  \label[lemma]{lemma:consistencyerror_HHO}
Let $s \in (\frac12,r+1]$. Assume that $u \in H^{1+s}(\Omega) \cap H^1_0(\Omega)$. Then, 
\begin{equation}
\| \tilde{\delta}_h(u) \| _{V_h'} \lesssim  h^s |u|_{H^{1+s}(\Omega)}. 
\end{equation}
\end{lemma}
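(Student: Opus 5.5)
The plan is the standard HHO consistency argument, done cell by cell, with the elliptic projection as the bridge. For each $T\in\mathcal{T}_h$, introduce the local elliptic projection $\mathcal{E}_T u\in\mathbb{P}^{r+1}(T)$. It is defined by $(\nabla(\mathcal{E}_T u-u),\nabla q)_T=0$ for all $q\in\mathbb{P}^{r+1}(T)$ and $(\mathcal{E}_T u-u,1)_T=0$. I will first establish the key commuting property $\mathsf{R}(\Pi_T^\ell u,\Pi^r_{\partial T} u)=\mathcal{E}_T u$, where on each face the facet component of $\Pi_h u$ is $\Pi^r_F\{u\}=\Pi^r_F u$ because $u\in H^1_0(\Omega)$ has no jumps and zero boundary trace. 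This follows from \eqref{eq:reconstruction} after integrating by parts:
\begin{align*}
(\nabla \mathsf{R}(\Pi_h u)|_T,\nabla q)_T &= -(\Pi_T^\ell u,\Delta q)_T + (\Pi^r_{\partial T}u,\nabla q\cdot\bfn_T)_{\partial T}\\
&= -(u,\Delta q)_T+(u,\nabla q\cdot\bfn_T)_{\partial T}=(\nabla u,\nabla q)_T .
\end{align*}
The second equality uses $\Delta q\in\mathbb{P}^{r-1}(T)\subset\mathbb{P}^\ell(T)$, which holds since $\ell\ge r-1$ in all admissible pairs, and $\nabla q\cdot\bfn_T|_F\in\mathbb{P}^r(F)$. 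The mean-value condition matches as well.

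Next, I will rewrite the consistency error. Since $u\in H^{1+s}$ with $s>\frac12$, the normal trace $\nabla u\cdot\bfn_T\in L^2(\partial T)$, so cellwise integration by parts is legitimate. Because $v_{\mathcal{F}_h}$ is single-valued and vanishes on $\mathcal{F}_h^\partial$, while the normal flux of $u$ is continuous, we get
$-(\Delta u,\sfp(v_h))=\sum_T\{(\nabla u,\nabla v_T)_T-(\nabla u\cdot\bfn_T,v_T-v_{\partial T})_{\partial T}\}$.
Using the definition of $\mathsf{R}$ with the test function $q=\mathcal{E}_T u$, we have $(\nabla\mathcal{E}_T u,\nabla\mathsf{R}(v_T,v_{\partial T}))_T=(\nabla \mathcal{E}_T u,\nabla v_T)_T-(\nabla\mathcal{E}_Tu\cdot\bfn_T,v_T-v_{\partial T})_{\partial T}$. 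Since $\nabla v_T\in\mathbb{P}^{\ell-1}\subset\nabla\mathbb{P}^{r+1}$, the orthogonality of $\mathcal{E}_T$ lets us replace $(\nabla \mathcal{E}_Tu,\nabla v_T)_T$ by $(\nabla u,\nabla v_T)_T$. Subtracting yields
$$\langle\tilde\delta_h(u),v_h\rangle=\sum_T\Big\{(\nabla(u-\mathcal{E}_Tu)\cdot\bfn_T,v_T-v_{\partial T})_{\partial T}+h_T^{-1}(\mathsf{S}(\Pi_h u|_T),\mathsf{S}(v_T,v_{\partial T}))_{\partial T}\Big\}.$$

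The two terms are then bounded separately. For the first, Cauchy--Schwarz gives the bound $\big(\sum_T h_T\|\nabla(u-\mathcal{E}_Tu)\|_{L^2(\partial T)}^2\big)^{1/2}\|v_h\|_{V_h}$. For the second, I will use the stability $h_T^{-1/2}\|\mathsf{S}(v_T,v_{\partial T})\|_{L^2(\partial T)}\lesssim a_T(v,v)^{1/2}$ from \eqref{eq:coercivity_HHO}. Writing $\mathsf{S}(\Pi_h u|_T)=\Pi^r_{\partial T}\big((\Pi^\ell_T u-u)+(I-\Pi^\ell_T)\mathcal{E}_Tu\big)|_{\partial T}$, which uses $\Pi^r_{\partial T}u$ for the facet part, this equals $\Pi^r_{\partial T}\big((I-\Pi_T^\ell)(\mathcal{E}_Tu-u)\big)$. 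Its $h_T^{-1/2}$-weighted boundary norm is therefore controlled by the discrete trace inequality and the $L^2$-stability of $\Pi_T^\ell$, by $h_T^{-1}\|\mathcal{E}_Tu-u\|_{L^2(T)}+\|\nabla(\mathcal{E}_Tu-u)\|_{L^2(T)}+h_T^{1/2}\|\nabla(\ldots)\|_{L^2(\partial T)}$-type terms.

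The main obstacle is the fractional approximation estimate for the elliptic projection, namely
$$\|u-\mathcal{E}_Tu\|_T+h_T\|\nabla(u-\mathcal{E}_Tu)\|_T+h_T^{3/2}\|\nabla(u-\mathcal{E}_Tu)\|_{\partial T}\lesssim h_T^{1+s}|u|_{H^{1+s}(T)}$$
for noninteger $s\in(\frac12,r+1]$. I will take this from \cite{CiErPi:21}, where it is obtained from polynomial approximation in Sobolev--Slobodeckij seminorms via Bramble--Hilbert/Dupont--Scott together with the multiplicative trace inequality $\|w\|_{\partial T}\lesssim h_T^{-1/2}\|w\|_T+h_T^{s-1/2}|w|_{H^s(T)}$, valid for $s>\frac12$. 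This last inequality is exactly why the restriction $s>\frac12$ appears. Summing over cells and using $h_T\le h$ then gives $\|\tilde\delta_h(u)\|_{V_h'}\lesssim h^s|u|_{H^{1+s}(\Omega)}$.
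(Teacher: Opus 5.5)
Your proposal is correct. It is exactly the standard HHO consistency argument that the paper does not write out but delegates to \cite{DiPEL:14} and \cite{CiErPi:21}: the commuting property $\mathsf{R}_h\circ\Pi_h=$ local elliptic projection, the cellwise rewriting of $\tilde\delta_h(u)$ as a flux term plus a stabilization term depending only on $u-\mathcal{E}_Tu$, and fractional approximation and trace estimates for $s>\frac12$. The only tacit point, shared with the paper's statement, is that for $s<1$ you need $\Delta u$ integrable so that $(\Delta u,\sfp(v_h))$ and your cellwise Green formula make sense; in the paper's applications $s\ge 1$, so $\Delta u\in L^2(\Omega)$ and this is automatic.
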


We notice that $s=r+1$ is possible, i.e., we recover a first-order estimate on the consistency error for $r=0$, $\| \tilde{\delta}_h(u) \| _{V_h'} \lesssim  h |u|_{H^2(\Omega)}$, and a second-order estimate for $r=1$, $\| \tilde{\delta}_h(u) \| _{V_h'} \lesssim  h^2 |u|_{H^3(\Omega)}$. While the assumption $u\in H^3(\Omega)$ goes beyond what is expected for the present unilateral Poisson obstacle problem, we notice that we can still exploit the regularity $u\in H^{\frac52-\epsilon}(\Omega)$, $\epsilon>0$. The improvement in the bound on the consistency error is rooted in the use of the reconstruction operator and the fact that the composition $\mathsf{R}_h \circ \Pi_h : H^1(\mathcal{T}_h) \rightarrow \mathbb{P}^{r+1}(\mathcal{T}_h)$ is the piecewise elliptic projection (preserving piecewise mean values). In particular, this implies that 
\begin{align} \label{eq:elliptic_proj_hho}
\sum_{T\in\mathcal{T}_h} \|\nabla (u -\mathsf{R}_h(\Pi_h u)) \|_{L^2(T)}^2 = 
\inf_{v \in \mathbb{P}^{r+1}(\mathcal{T}_h)} \sum_{T\in\mathcal{T}_h} \|\nabla(u-v)\|_{L^2(T)}^2.
\end{align}

\begin{theorem}[Error estimate for HHO, $\ell=0$, $r\in\{0,1\}$]\label{thm:error_HHO_l=0} 
\textup{(i)} Case $r=0$. Assume that $u \in H^2(\Omega) \cap H^1_0(\Omega)$ and that there is $\tau \in (0,1)$ so that $u - \phi \in W^{2,\frac{n}{\tau}}(\Omega)$ and $\lambda= -\Delta u - f \in W^{\tau , 1}(\Omega)$ (recall that $n\in\{2,3\}$ is the space dimension). 
Then the proximal HHO iterates generated by \Cref{alg:main_alg_discrete} satisfy the following error estimate: For all $m\geq1$, 
\begin{subequations} \label{eq:HHO_0}
\begin{align} \label{eq:hh000}
\sum_{T \in \mathcal{T}_h} \|\nabla (\mathsf{R}_h(u_h^m) - u)\|_{L^2(T)}^2  + \| u_h^m - \Pi_h u\|_{V_h}^2  \lesssim \frac{1}{\sum_{k=1}^m \alpha_k} + h^2. 
\end{align} 
\textup{(ii)} Case $r =1$. Assume that there is $\epsilon \in (0,\frac12]$ so that 
$u \in H^{\frac52 - \epsilon}(\Omega)$, $u - \phi \in W^{2 - \frac{\epsilon}{2} \frac{n-2}{n-1}, \frac{2}{\epsilon}(n-1)}(\Omega)$, and $\lambda \in W^{1-\epsilon,1}(\Omega)$. 
Then, the proximal HHO iterates generated by \Cref{alg:main_alg_discrete} satisfy the following error estimate: For all $m\geq1$, 
\begin{equation}\label{eq:HHO_error_reconstructed_r1}
 \sum_{T \in \mathcal{T}_h} \|\nabla (\mathsf{R}_h(u_h^m) - u)\|_{L^2(T)}^2  + \| u_h^m - \Pi_h u\|_{V_h}^2  \lesssim \frac{1}{\sum_{k=1}^m \alpha_k} + h^{3 - 2 \epsilon}. 
\end{equation} 
\end{subequations}
\textup{(iii)} In both cases for $r$, the following holds:
\begin{equation} \label{eq:lambda_o_HHO}
\|\lambda_h^m - \lambda \|_{H^{-1}(\Omega)}^2 + \|o_h^m - u\|_{L^2(\Omega)}^2 
\lesssim \frac{1}{\sum_{k=1}^m \alpha_k} + h^2. 
\end{equation}
\end{theorem}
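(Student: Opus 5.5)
The plan is to redo the energy argument of \Cref{thm:bestapprox}, but without the smoother $\mathcal{C}_h$. Instead, I will use the complementarity structure of $\lambda$ together with the modified consistency error $\tilde\delta_h(u)$ from~\eqref{eq:tilde_delta_hho}. The first half of that proof only uses \Cref{ass:basic}, the energy dissipation \Cref{lemma:energy} and the three-point identity, so it carries over verbatim and gives~\eqref{eq:inter3}:
\[
E_h(u_h^m)-E_h(\Pi_h u)+\frac{(\mathcal{D}(u,o_h^m),1)}{\sum_{k=1}^m\alpha_k}\le \frac{(\mathcal{D}(u,o_h^0),1)}{\sum_{k=1}^m\alpha_k}\lesssim \frac{1}{\sum_{k=1}^m\alpha_k}.
\]
Set $e_h:=u_h^m-\Pi_h u$. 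By~\eqref{eq:energy_expansion_after_summing}, the definition of $\tilde\delta_h(u)$, and the notation $\lambda=-\Delta u-f$, we have
\[
E_h(u_h^m)-E_h(\Pi_h u)=\langle\tilde\delta_h(u),e_h\rangle_{V_h',V_h}+(\lambda,\sfp(e_h))+\tfrac12 a_h(e_h,e_h).
\]
The consistency term is handled by Young's inequality and \Cref{lemma:consistencyerror_HHO}. This gives $h^2$ for $r=0$ (with $s=1$) and $h^{3-2\epsilon}$ for $r=1$ (with $s=\frac32-\epsilon$). Everything therefore reduces to a lower bound on $(\lambda,\sfp(e_h))$.

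For that lower bound, recall that $\sfp(u_h^m)=u_{\mathcal{T}_h}^m$ is piecewise constant, that $\sfp(\Pi_hu)=\Pi^0_{\mathcal{T}_h}u$, and that $\lambda\ge0$. Write $w:=u-\phi\ge0$. By \Cref{lemma:Kh}, $u_T^m\ge \Pi^0_T\phi$ on each element, so
\[
(\lambda,u^m_{\mathcal{T}_h})=(\Pi^0_{\mathcal{T}_h}\lambda,u^m_{\mathcal{T}_h})\ge(\Pi^0_{\mathcal{T}_h}\lambda,\Pi^0_{\mathcal{T}_h}\phi)=(\lambda,\Pi^0_{\mathcal{T}_h}\phi).
\]
Complementarity gives $(\lambda,w)=0$, and the $\phi$ contributions cancel. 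This yields
\[
(\lambda,\sfp(e_h))\ge-(\lambda,\Pi^0_{\mathcal{T}_h}w-w)=-(\lambda-\Pi^0_{\mathcal{T}_h}\lambda,\Pi^0_{\mathcal{T}_h}w-w).
\]
Only elements meeting $\operatorname{supp}\lambda$, which lies in the contact set $\{w=0\}$, contribute to this last pairing.

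The main obstacle is to show that this term is $O(h^2)$, respectively $O(h^{3-2\epsilon})$. A naive bound only gives $h^\tau\cdot h$. My first attempt is to exploit the fact that $w$ attains its minimum $0$ on the contact set, so $\nabla w=0$ there. By the Sobolev embedding, $w\in W^{2,n/\tau}\hookrightarrow C^{1,1-\tau}$, and in the case $r=1$ the stated fractional space for $w$ likewise embeds into $C^{1,1-\epsilon}$. Then, on any element $T$ meeting the contact set, $\|\nabla w\|_{L^\infty(T)}\lesssim h^{1-\tau}$, and hence $\|w-\Pi^0_Tw\|_{L^\infty(T)}\lesssim h^{2-\tau}$. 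Combined with $\|\lambda-\Pi^0_{\mathcal{T}_h}\lambda\|_{L^1}\lesssim h^{\tau}|\lambda|_{W^{\tau,1}}$, this gives $h^2$. For $r=1$, the same argument with $\tau=1-\epsilon$ for $\lambda$ and the sharper Hölder exponent of $\nabla w$ should give $h^{3-2\epsilon}$. I would check that the exponents in the hypotheses are exactly those making the product balance. Collecting the estimates and absorbing $\|e_h\|_{V_h}^2$ via~\eqref{eq:coercivity_HHO} gives the bound on $\|u_h^m-\Pi_hu\|_{V_h}^2$.

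The reconstruction error then follows from the triangle inequality with $\mathsf{R}_h(\Pi_hu)$, the stability $\sum_T\|\nabla\mathsf{R}(v_T,v_{\partial T})\|^2_{L^2(T)}\lesssim\|v_h\|_{V_h}^2$ (a consequence of~\eqref{eq:coercivity_HHO}), and~\eqref{eq:elliptic_proj_hho}, which gives $h^{2}$, respectively $h^{3-2\epsilon}$. For (iii), I repeat the proof of \Cref{thm:multiplier_convergence} with $J_h=\Pi^0_{\mathcal{T}_h}$ and $\tilde\delta_h$ in place of $\delta_h$. This gives
\[
\langle\lambda_h^m-\lambda,v\rangle=a_h(e_h,\Pi_hv)+\langle\tilde\delta_h(u),\Pi_hv\rangle+(\lambda,\Pi^0_{\mathcal{T}_h}v-v),
\]
and the last term is $\lesssim h\|\nabla v\|$ since $\lambda\in L^2$. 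Finally, \Cref{lem:conv_ohm} together with the discrete Poincaré inequality $\|\sfp(e_h)\|_{L^2}\lesssim\|e_h\|_{V_h}$ (from~\eqref{eq:bnd_sfp} and broken Poincaré) and $\|u-\Pi^0_{\mathcal{T}_h}u\|_{L^2}\lesssim h$ gives the bound on $o_h^m$.
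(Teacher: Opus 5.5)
Your proposal is correct and follows the paper's proof. You start from \eqref{eq:inter3} and \eqref{eq:energy_expansion_after_summing}, handle $\tilde\delta_h(u)$ by Young's inequality and \Cref{lemma:consistencyerror_HHO}, control $(\lambda,\Pi^0_{\mathcal{T}_h}u-u^m_{\mathcal{T}_h})$ using only \Cref{lemma:Kh}, $\lambda\ge 0$ and complementarity, and treat the reconstruction and part (iii) exactly as the paper does. The differences are only in presentation: your global reduction to $(\lambda-\Pi^0_{\mathcal{T}_h}\lambda,\Pi^0_{\mathcal{T}_h}w-w)$ replaces the paper's non-contact/contact/free element split, and your $C^{1,\beta}$ plus fractional-Poincar\'e estimate makes explicit the free-element bound that the paper simply cites from \cite[Lemma~6]{Cicuttin2020}. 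Your exponent check $s-1-n/p=1-\epsilon$ for case (ii) is correct.
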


\begin{proof} 
We first prove the bounds~\eqref{eq:HHO_0} on the primal variable. 
Our starting point is~\eqref{eq:inter3} and~\eqref{eq:energy_expansion_after_summing} from the proof of \Cref{thm:bestapprox}. Combining these two equations and invoking the coercivity property from~\eqref{eq:coercivity_HHO} gives 
\begin{align*} 
\| u_h^m - \Pi_hu\|_{V_h}^2
& \lesssim \frac{1}{\sum_{k=1}^m \alpha_k} + a_h(\Pi_hu ,  \Pi_h u - u_h^m) - (f,\Pi_{\mathcal{T}_h}^0 u  - u_{\mathcal{T}_h}^m )  \\ 
& = \frac{1}{\sum_{k=1}^m \alpha_k} + \langle \tilde \delta_h(u), \Pi_h u - u_h^m\rangle_{V_h', V_h} +  (\lambda,\Pi_{\mathcal{T}_h}^0 u  - u_{\mathcal{T}_h}^m ), 
\end{align*}
where we used the definition~\eqref{eq:tilde_delta_hho} of the consistency error and the fact that $\lambda$ is an integrable function in $\Omega$ owing to our regularity assumption on $u$.  
Invoking Young's inequality for the consistency error gives
\[
\| u_h^m - \Pi_hu\|_{V_h}^2 \lesssim \frac{1}{\sum_{k=1}^m \alpha_k} + \|\tilde \delta_h(u)\|_{V_h'}^2 +  (\lambda,\Pi_{\mathcal{T}_h}^0 u  - u_{\mathcal{T}_h}^m ).
\]
Let us consider the third term on the right-hand side.
Owing to the complementarity conditions and the regularity assumptions, we infer that $\lambda \geq 0$, $\lambda (u -\phi) = 0$ a.e. in $\Omega$, and $\lambda = 0$ in $ \{ x \in \Omega : u(x) > \phi(x) \}$. Following \cite{Cicuttin2020}, we split the mesh into three subsets (non-contact, contact, free) independently of the discrete solution: 
$\mathcal{T}_h^{\textsc{nc}}  := \{ T \in \mathcal{T}_h \mid  u > \phi \text{ a.e. in } T \},$ 
$\mathcal{T}_h^{\textsc{c}} := \{ T \in \mathcal{T}_h \mid  u = \phi \text{ a.e. in } T \},$ and 
$\mathcal{T}_h^{\textsc{f}}   :=  \mathcal{T}_h \setminus (\mathcal{T}_h^{\textsc{nc}} \cup \mathcal{T}_h^{\textsc{c}}).$ 
For all $T \in \mathcal{T}_h^{\textsc{nc}}$, we have $\lambda = 0$. For all $T \in \mathcal{T}_h^{\textsc{c}}$, we have 
\begin{align}
(\lambda, \Pi_T^0 u - u_T^m)_T = (\lambda , \Pi_T^0 \phi  - u_{T}^m)_T \leq 0, 
\end{align}
owing to Lemma~\ref{lemma:Kh}, recalling that $u_T^m \in \mathbb{P}^0(T)$ and that $\lambda\ge0$. (Notice in passing that the last inequality is valid more generally on all $T\in\mathcal{T}_h$.) Therefore, the above error estimate becomes
\begin{align}
\|u_h^m - \Pi_h u\|^2_{V_h} \lesssim \frac{1}{\sum_{k=1}^m \alpha_k}  + \| \tilde \delta_h(u)\|_{V_h'}^2 +\sum_{T \in \mathcal{T}_h^{\textsc{f}}} (\lambda, \Pi_T^0 u - u_T^m)_T.
\end{align}
From here, we use \Cref{lemma:consistencyerror_HHO} to bound the second term and \cite[Lemma 6]{Cicuttin2020} to bound the third term (notice that the latter bound only uses the property $(\lambda , \Pi_T^0 \phi  - u_{T}^m)_T \leq 0$ satisfied by $u_T^m$). This provides the bound on $\|u_h^m - \Pi_h u\|^2_{V_h}$ stated in~\eqref{eq:HHO_0}. To obtain the bound on the reconstructed solution, we observe that
\begin{multline*}
\sum_{T \in \mathcal{T}_h} \|\nabla (\mathsf{R}_h(u_h^m) - u)\|_{L^2(T)}^2
\\ \le 2 \sum_{T \in \mathcal{T}_h} \Big\{ \|\nabla \mathsf{R}_h(u_h^m - \Pi_hu)\|_{L^2(T)}^2
+ \|\nabla (\mathsf{R}_h\Pi_hu - u)\|_{L^2(T)}^2 \Big\}.
\end{multline*}
The first term on the right-hand side is estimated using the second bound in \eqref{eq:coercivity_HHO} and the above bound on $\|u_h^m - \Pi_h u\|^2_{V_h}$, whereas the second term is estimated using~\eqref{eq:elliptic_proj_hho}.  

We now prove~\eqref{eq:lambda_o_HHO}. We observe that
\begin{align}\label{eq:t1t2t3}
\langle \lambda_h^m - \lambda, v \rangle = &{}  a_h(u_h^m - \Pi_h u, \Pi_h v) + \langle \tilde \delta_h (u) , \Pi_h v \rangle_{V_h',V_h} \\ & - (\lambda,v-\Pi^0_{\mathcal{T}_h}v), \quad \forall v \in H^1_0(\Omega).  \nonumber
\end{align}
Let $T_1$, $T_2$, $T_3$ denote the three terms on the right-hand side. 
Since $\|\Pi_hv\|_{V_h} \lesssim \|\nabla v\|_{L^2(\Omega)}$, we readily infer that
\[
|T_1+T_2|\lesssim \big( \|u_h^m - \Pi_h u\|_{V_h} + \|\tilde{\delta}_h(u)\|_{V_h'} \big) \|\nabla v\|_{L^2(\Omega)}.
\]
Moreover, we have $|T_3|\lesssim h$. This completes the proof of the estimate on $\|\lambda_h^m-\lambda\|_{H^{-1}(\Omega)}^2$. 
Finally, the estimate on the bound-preserving approximation $o_h^m$ follows from Lemma~\ref{lem:conv_ohm} and the broken Poincar\'e inequality.
\end{proof}

\begin{remark}[Improved convergence on Lagrange multiplier]
For $r=1$, if we further assume that $\lambda\in H^{\frac12-\epsilon}(\Omega)$, the error estimate on $\lambda_h$ can be improved to
\begin{equation*}
\|\lambda - \lambda_h\|_{H^{-1}(\Omega)}^2  \lesssim \frac{1}{\sum_{k=1}^m \alpha_k} + h^{3 - 2 \epsilon}. 
\end{equation*} 
This follows from \eqref{eq:t1t2t3}, the subsequent bound on $T_1+ T_2$, and the following improved estimate on $T_3$:
\begin{equation*}
T_3=(\lambda-\Pi^0_{\mathcal{T}_h}\lambda,v-\Pi^0_{\mathcal{T}_h}v)\lesssim h^{3/2-\epsilon}\|\nabla v\|_{L^2(\Omega)},
\end{equation*}
where we exploited that $\lambda\in H^{\frac12-\epsilon}(\Omega)$ to gain the additional factor $h^{\frac12-\epsilon}$.
\end{remark}

\section{Numerical Experiment} \label{sec:num}
Consider the following obstacle in $\Omega := (-1,1)^2:$
\begin{equation*}
   \phi := \begin{cases}
        \sqrt{ 1/4 - r^2} & \mathrm{if } \quad r \leq 9/20, \\ 
        \varphi (r) & \mathrm{otherwise},
    \end{cases} \qquad \text{where } r := \sqrt{x^2 + y^2}\,.
\end{equation*}
In the above, $\varphi(r)$ is the unique $C^1$ linear extension of $r \mapsto \sqrt{1/4 - r^2}$ for $r > 9/20$.
The exact solution with $f\equiv0$ is given by 
\begin{equation*}
        u := \begin{cases}
    Q  \ln \sqrt{x^2+y^2} &\mathrm{if } \quad \sqrt{x^2 +y^2} >  a, \\ 
\phi & \mathrm{otherwise},
    \end{cases} 
\end{equation*}
where $a:= \exp(W_{-1}(-1/(2e^2))/2 + 1)\approx 0.34898$ and $W_{-1}$ denotes the $-1$-branch of the Lambert W-function, and $Q := \sqrt{1/4-a^2}/\ln a$. The solution $u\in H^{5/2-\epsilon}(\Omega)$ and the corresponding multiplier $\lambda\in H^{1/2-\epsilon}(\Omega)$ are depicted in \Cref{fig:ex1-sol}.
\begin{figure}
    \centering
    \begin{tabular}{cc}
    \includegraphics[trim={2cm, 0cm, 2cm, 4cm}, clip, width=0.4\textwidth]{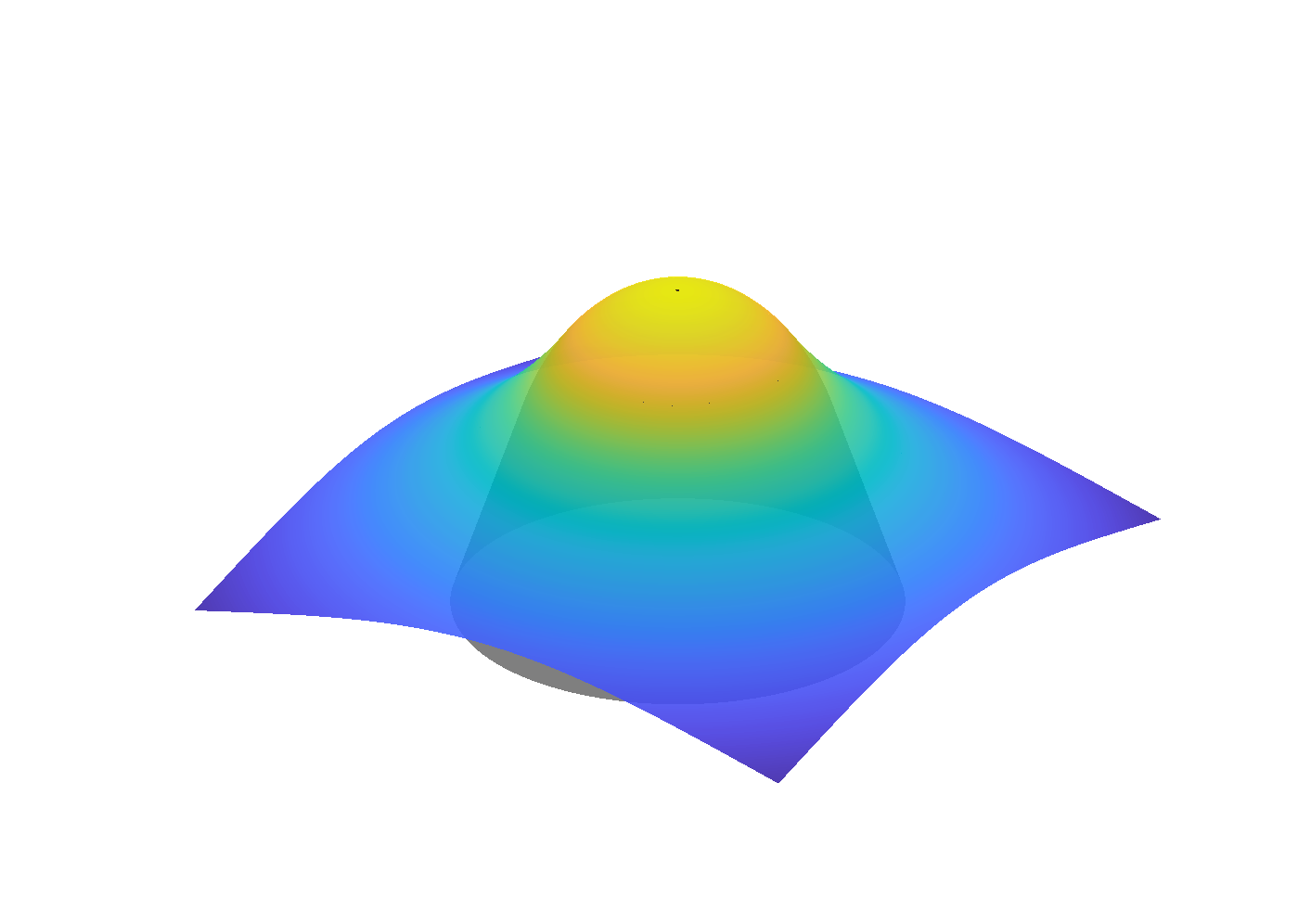}&
    \includegraphics[width=0.4\textwidth]{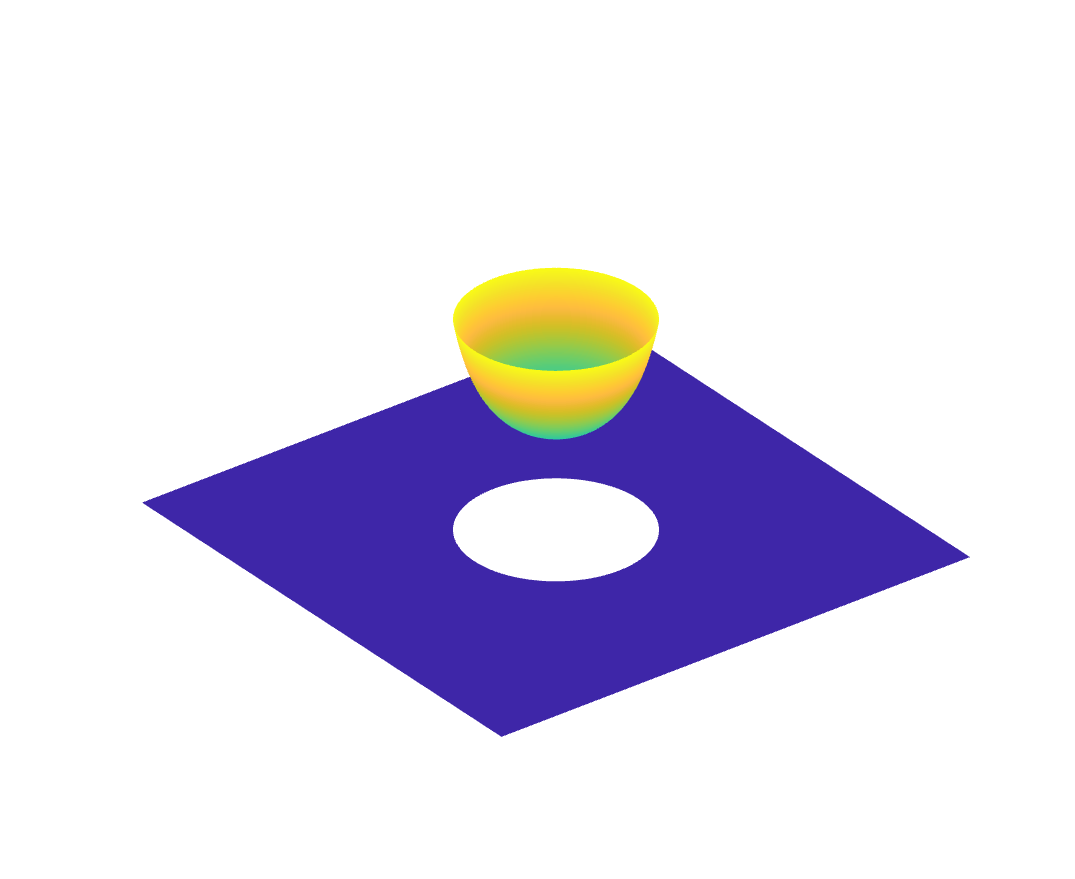}\\
    (a)&(b)
    \end{tabular}
    \caption{(a) The solution and the obstacle and (b) the corresponding Lagrange multiplier $\lambda$.\label{fig:ex1-sol}}
\end{figure}

To verify the convergence rate in \Cref{thm:general_error}, we consider a sequence of uniform triangular meshes with side length $2^{-3},...,2^{-7}$.
For all the relevant methods (SIPG, EG, H-IP and HHO($\ell=r=1$)), we use the polynomial degree $k=1$.
The convergence history is reported in \Cref{fig:ex1-conv}.
In addition to the nonconforming methods introduced in this work, we also included a stable conforming pair (denoted by MINI, \cite{keith2023proximal}),
\begin{equation*}
    V_h\times W_h = \Big((H^1_0(\Omega)\cap \mathbb{P}^1(\mathcal{T}_h))\oplus \mathbb{B}_3(\mathcal{T}_h)\Big)\times \mathbb{P}^0(\mathcal{T}_h),
\end{equation*}
where $\mathbb{B}_3(\mathcal{T}_h)$ consists of the cubic bubble functions on $\mathcal{T}_h$.

All methods show optimal convergence in $H^1$-norm as expected from \Cref{thm:general_error}.
The solution also converges optimally in the $L^2$-norm and
the approximate multiplier $\lambda_h$ converges in the $L^2$-norm with a rate $\mathcal{O}(h^{1/2})$.
While all methods show a similar tendency, the HHO method provides the best accuracy, probably due to the use of the higher-order reconstruction operator.

\begin{figure}
    \centering
    \begin{tabular}{ccc}
    \includegraphics[width=0.3\textwidth]{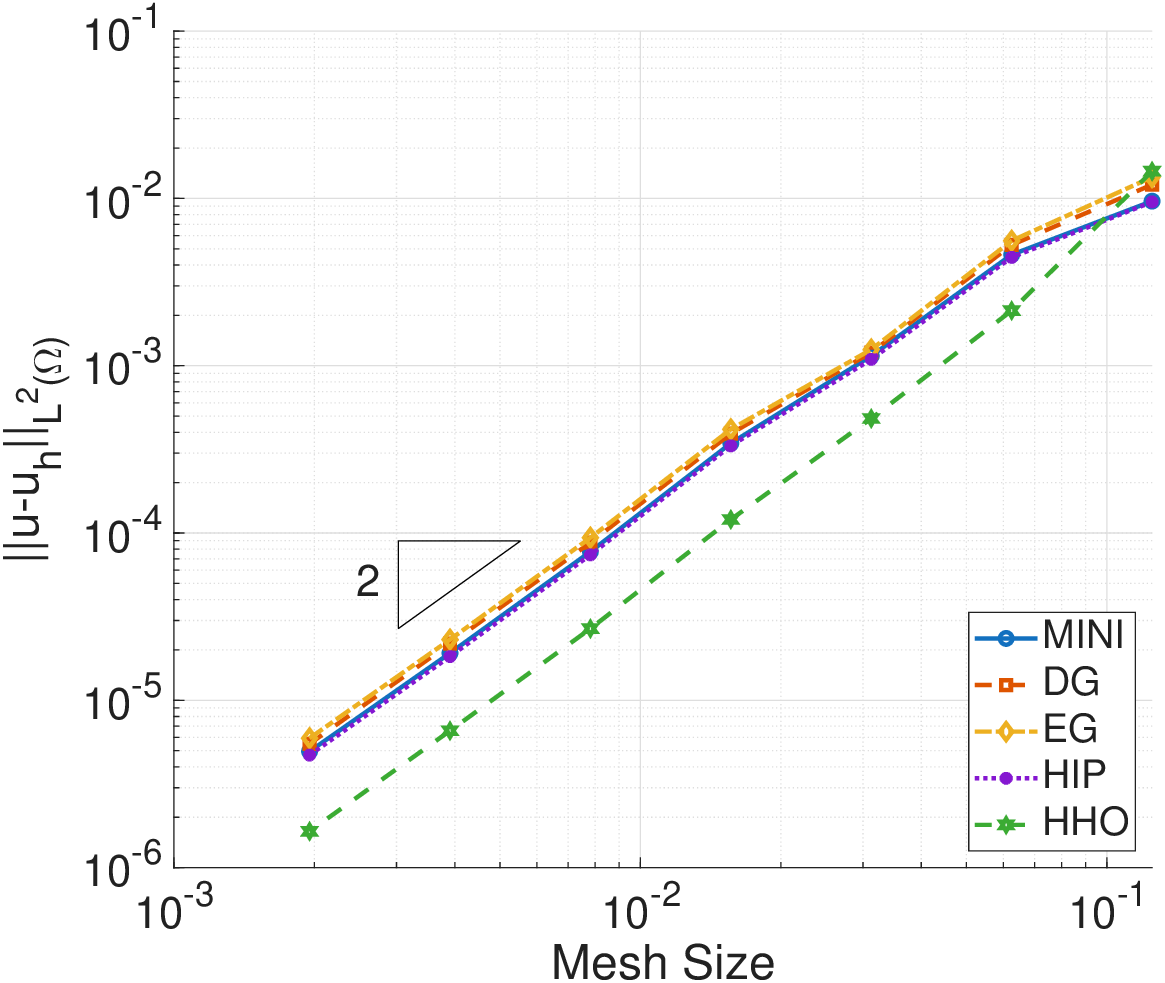}&
    \includegraphics[width=0.3\textwidth]{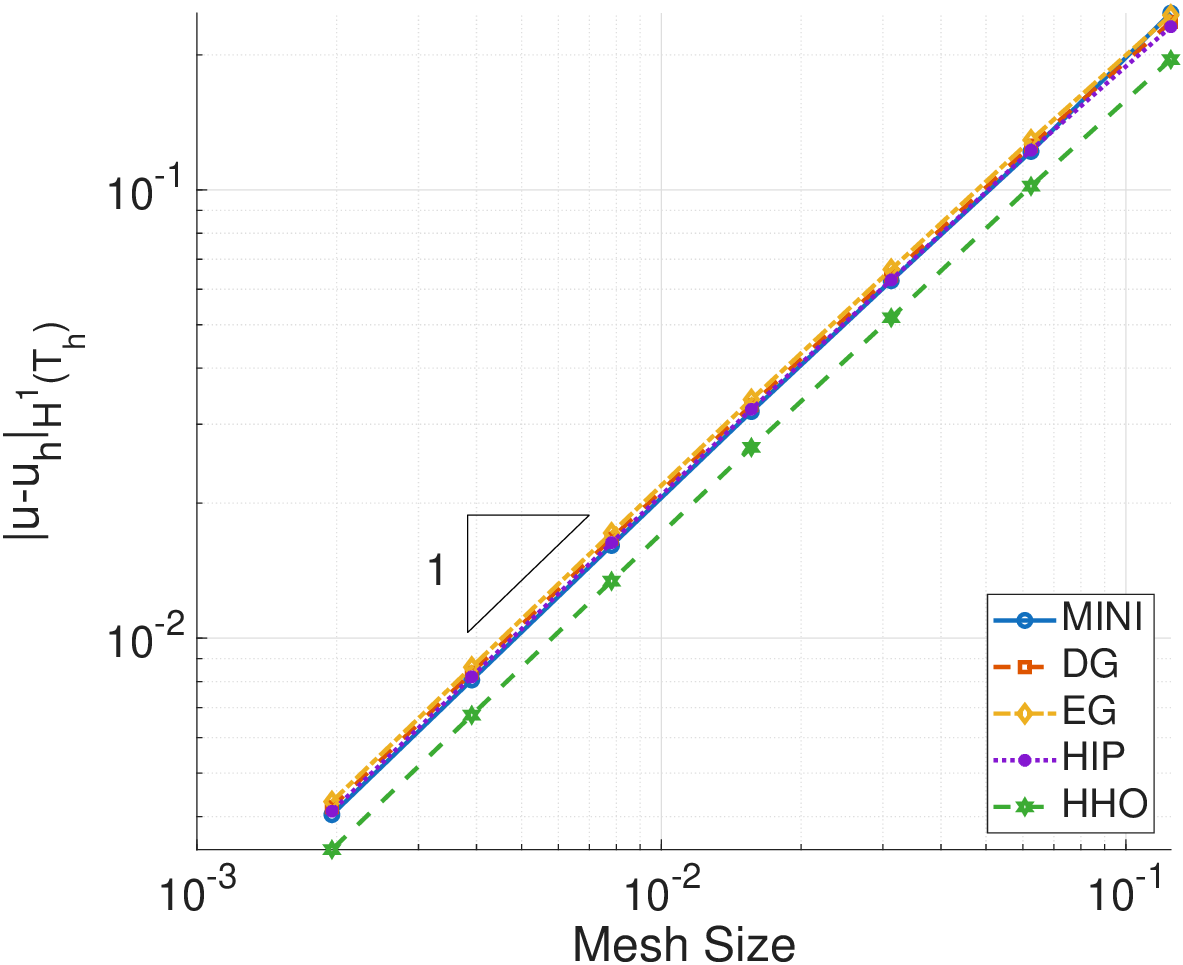}&
    \includegraphics[width=0.3\textwidth]{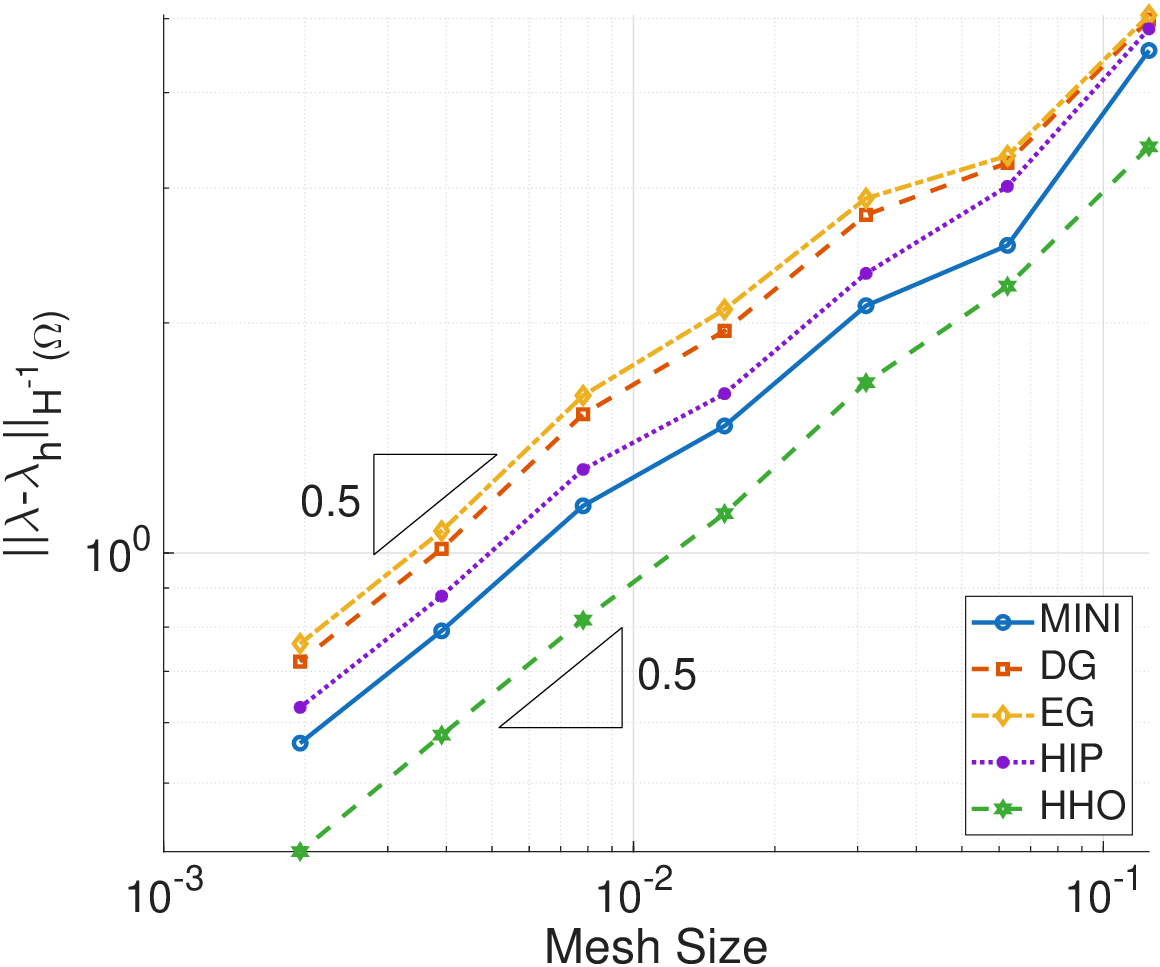}\\
    (a)&(b)&(c)\\
    
    \end{tabular}
    \caption{Convergence history of (a) $\|u-u_h\|_{L^2(\Omega)}$; (b) $|u-u_h|_{H^1(\mathcal{T}_h)}$; (c) $\|\lambda-\lambda_h\|_{L^2(\Omega)}$.\label{fig:ex1-conv}}
\end{figure}

The second experiment examines HHO more closely.
We test the three proximal HHO methods, corresponding to $(\ell,r)=(0,0),(0,1),(1,1)$, and report errors for the reconstructed solution $\mathsf{R}(u_h)\in\mathbb{P}^{2}(\mathcal{T}_h)$ defined in \eqref{eq:reconstruction}.
As shown in \Cref{fig:ex1-HHO-conv}, the variant with $\ell=0$ and $r=1$ yields decay rates of order $\frac32$, as predicted by Theorem~\ref{thm:error_HHO_l=0} (Case (ii)). The same rate is observed for the variant with $\ell=r=1$, although not predicted by the theory so far, whereas the variant with $\ell=r=0$ leads to first-order decay rates, as predicted by Theorem~\ref{thm:error_HHO_l=0} (Case (i)).

\begin{figure}
    \centering
    \begin{tabular}{cc}
    \includegraphics[width=0.4\textwidth]{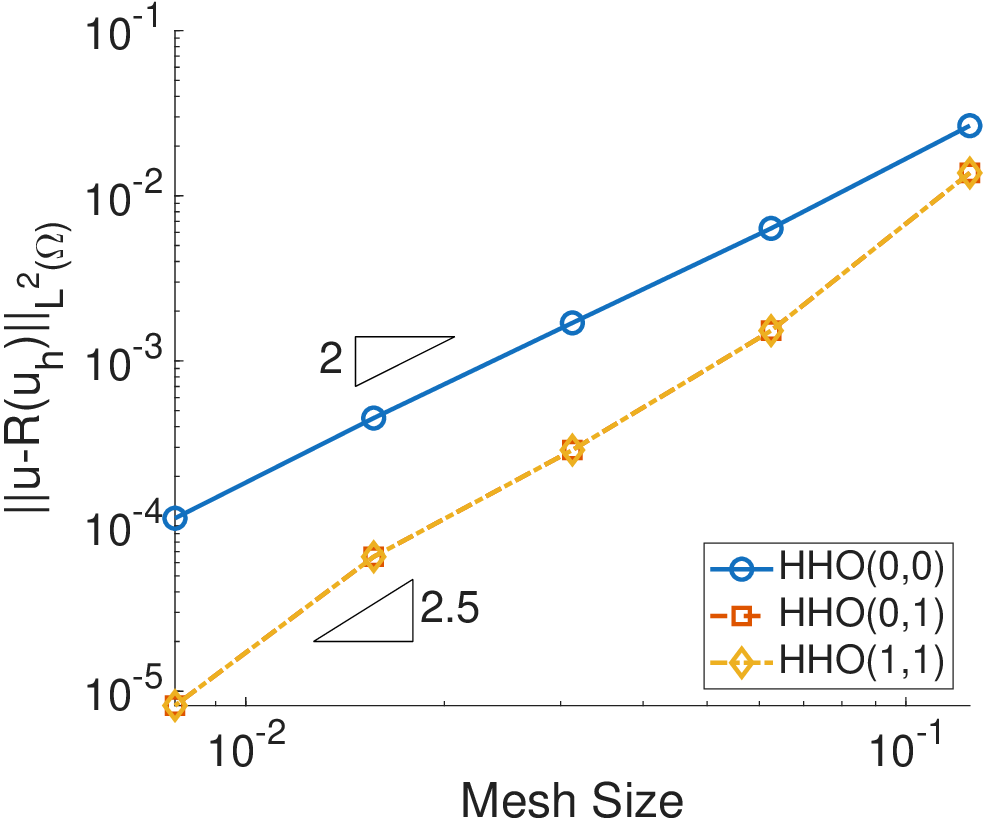}&
    \includegraphics[width=0.4\textwidth]{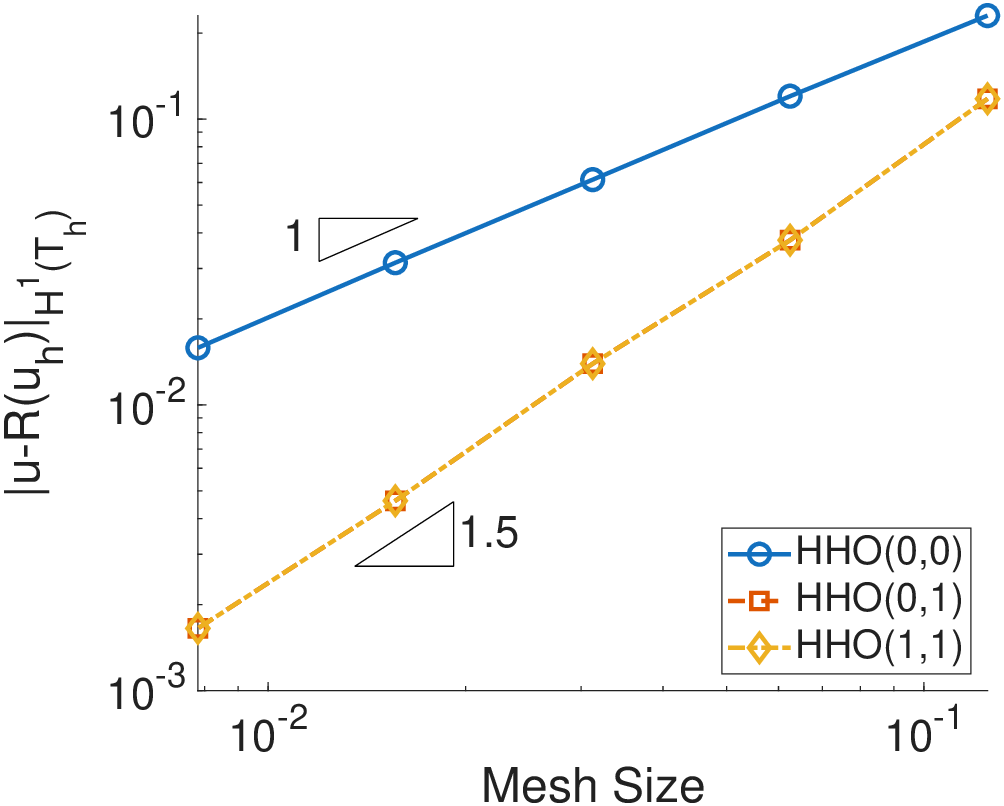}\\
    (a)&(b)\\
    
    \end{tabular}
    \caption{Proximal HHO$(\ell , r)$ method: Convergence history of (a) $\|u-\mathsf{R}(u_h)\|_{L^2(\Omega)}$; (b) $|u-\mathsf{R}(u_h)|_{H^1(\mathcal{T}_h)}$.\label{fig:ex1-HHO-conv}}
\end{figure}


\section{Conclusion}\label{sec:conc}
We have introduced and analyzed a family of proximal discontinuous Galerkin methods for the unilateral Poisson obstacle problem, extending the PG framework developed so far essentially for conforming discretizations. We established best-approximation properties with minimal regularity assumptions. We also showed optimal error estimates in the energy norm under mild additional assumptions, demonstrating that the optimization error is decoupled from the spatial discretization error. A notable outcome is that the HHO method with piecewise constant cell unknowns and piecewise linear facet polynomials $(\ell = 0,\, r = 1)$ yields an energy error estimate of order $h^{3/2 - \epsilon}$ for the locally reconstructed solution. Numerical experiments confirm the theoretical convergence rates across all methods. 

\bibliographystyle{plain}
\bibliography{references}
\end{document}